\documentclass[reqno,11pt]{amsart}
\usepackage{amsmath,amssymb,mathrsfs,amsthm,amsfonts}
\usepackage[latin1]{inputenc}   
\usepackage[inline]{enumitem}
\usepackage[usenames,dvipsnames]{xcolor}
\usepackage{hyperref}
\usepackage[percent]{overpic}
\usepackage{comment}
\usepackage{algorithm}
\usepackage{algorithmic}
\usepackage{mathtools}
\usepackage{subcaption}
\usepackage{tikz}
\usepackage{bm}
\usetikzlibrary{calc}
\usetikzlibrary{arrows.meta,backgrounds}
\usepackage{multirow,array,longtable,booktabs}
\usetikzlibrary{arrows}
\newcommand{\C}{{\mathcal C}}

\newtheorem{theorem}{Theorem}[section]
\newtheorem{lemma}[theorem]{Lemma}
\newtheorem{proposition}[theorem]{Proposition}

\newtheorem{prop}[theorem]{Proposition}

\theoremstyle{definition}
\newtheorem{definition}[theorem]{Definition}

\newtheorem{remark}[theorem]{Remark}
\newtheorem{remarks}[theorem]{Remarks}
\numberwithin{equation}{section}
\allowdisplaybreaks
\usepackage{acronym}

\acrodef{LDP}{Large Deviation Principle}

\newcommand{\F}{\mathbb{F}}
\renewcommand{\P}{\mathbb{P}}	
\newcommand{\Q}{\mathbb{Q}}
\newcommand{\E}{\mathbb{E}}	

\newcommand{\be}{\begin{equation}}
\newcommand{\ee}{\end{equation}}
\newcommand{\bea} {\begin{array}{rl}}
\newcommand{\eea} {\end{array}}
\newcommand{\bepa}{\left\{ \begin{array}{l}}
\newcommand{\eepa} {\end{array}\right.}
\makeatletter

\newcommand{\R}{\mathbb{R}} 
\newcommand{\N}{\mathbb{N}}

\newcommand{\ep}{\epsilon}

\newcommand{\ds}{\displaystyle}

\renewcommand{\hat}{\widehat}

\renewcommand{\bar}{\overline}

\usepackage{graphicx}

\newcommand{\bx}{\bm{x}}
\newcommand{\bw}{\bm{w}}

\newcommand{\eps}{\epsilon}

\newcommand{\by}{\bm{y}}

\title[Regularization of a mean-field SDE by a common noise]{Regularization of a mean-field SDE by an additive common noise: the conditional expectation case}

\author[Pierre Cardaliaguet \and Benjamin Jourdain]{Pierre Cardaliaguet$^\ast$ \and Benjamin Jourdain$^\dagger$}
\address{$^\ast$Ceremade (UMR CNRS 7534), Universite Paris-Dauphine and PSL,
Place du Maréchal De Lattre De Tassigny, 75775 Paris Cedex 16,  France. Email: cardaliaguet@ceremade.dauphine.fr}
\address{$^\dagger$CERMICS, CNRS, INRIA, ENPC, Institut Polytechnique de Paris, Marne-la-Vallée, France. Email:benjamin.jourdain@enpc.fr}
\thanks{P. Cardaliaguet was partially supported by the Agence Nationale de la Recherche (ANR), project ANR-22-CE40-0010 COSS. The work was completed during the period when the first author was hosted by the INRIA project team Martingale, that he wishes to thank warmly.}
\usepackage[foot]{amsaddr}
\begin{document}
	
\begin{abstract} We investigate a McKean-Vlasov stochastic differential equation with an additive common noise and in which the interaction is  through the conditional expectation. We show that, in the presence of an additive individual noise, existence and uniqueness of a weak solution hold for any drift given by a bounded and measurable function of the position and the conditional expectation. When there is no individual noise, existence and uniqueness still hold if the drift is in addition Lipschitz in the position variable.  This shows that the presence of a finite dimensional common noise may allow to overcome the discontinuity of the drift with respect to the interaction term, provided that this interaction term is a conditional expectation. We also prove propagation of chaos for systems of particles where the conditional expectation is replaced by the empirical mean of the positions or by a closely related contribution with better prepared noise.
\end{abstract}
	
	
	\maketitle


\section{Introduction}
We investigate the  following McKean-Vlasov stochastic differential equation with a common noise:
\be\label{eq.barXsigma=0}
X_t =  X_0+ \int_0^t  b(X_s, \E\left[X_s|W^0\right]) ds +\sigma W_t+ \sigma^0dW^0_t,\; t\in[0,T],
\ee
where $T\in (0,+\infty)$ is a finite time-horizon.
The problem is formally set on a filtered probability space $(\Omega, \mathcal F, P, (\mathcal F_t))$ endowed with two independent $d-$dimensional Brownian motions $W$ and $W^0$; $W$ is often called the individual (or idiosyncratic) noise; $W^0$ is the common noise. The initial condition $ X_0$ is an $\R^d$-valued random vector with law $\mu_0$ and is independent of $W$ and $W^0$. The drift coefficient $b:\R^d\times \R^d\to \R^d$ is (at least) Borel measurable and bounded, while the real numbers $\sigma$ and $\sigma^0$ are assumed to be nonnegative. Our goal is to establish the existence and the uniqueness of the weak solution of this equation in two settings: 
\begin{enumerate}
\item[(A)] either $\sigma=0$ and $\sigma^0$ is positive, in which case we suppose in addition that $b$ is Lipschitz continuous in the first variable uniformly in the second one,

\item[(B)] or $\sigma$ and $\sigma^0$ are positive, in which case no additional condition is supposed on $b$. 
\end{enumerate}
We also address the particle approximation of the solution in both cases.  \\ 

Equation \eqref{eq.barXsigma=0} is a typical example of McKean-Vlasov diffusion with a common noise. This kind of model has attracted a lot of attention in the recent years,  largely in connection with questions in Mean Field Game theory, see \cite{CDLL, CDbook}.  When the drift coefficient $b$ is globally Lipschitz and depends more generally on the conditional law of the process with respect to the common noise, the existence and the uniqueness of a strong solution to this equation is well-known: this has been proved by Sznitman \cite{Sz06} for problems without common noise (i.e., $\sigma^0=0$) where the conditional law amounts to the regular law, and by Vaillancourt \cite{Va88}, Dawson and Vaillancourt \cite{DaVa95}  and Goghi and Gess \cite{CoGe19} in the presence of common noise ($\sigma^0>0$). In the presence of common noise, the convergence of the associated particle system has been addressed by Kurtz and Xiong \cite{KuXi99} and by Coghi and Flandoli \cite{CoFl16} (see also the monograph by Carmona and Delarue \cite{CDbook}). The Lipschitz regularity assumption on the space variable can  be largely relaxed in the presence of individual noise ($\sigma>0$). This has been documented when there is no common noise: see for instance \cite{Jo97}  by the second author,  \cite{JaWa16, JaWa18} by Jabin and Wang,  \cite{La18}  by Lacker, \cite{Jab19} by Jabir or  \cite{MiVe20} by Mishura and Veretennikov, to quote only a few references. Recent works also investigate the case where there is a common noise, as Hammersley, Siska and Szpruch \cite{HSS}, Crowell \cite{Cr24, Cr25} or Shkolnikov and Yeung \cite{ShYe26}. However little is understood when the drift $b$ is discontinuous with respect to the (conditional) law of the process: it is then easy to find examples where the existence or uniqueness of the solution is called into question, even for a drift $b$ which is Lipschitz in the first variable: see Appendix \ref{sec.appen}, where the nonlocal term is a conditional variance instead of the conditional expectation. 

Our aim is to explain that the presence of the common noise allows to overcome the discontinuity of the drift with respect to the interaction term, provided that this interaction term is a {\it conditional expectation}. In the framework of Mean Field Game (MFG) theory, the regularizing property of the (finite dimensional) common noise has been observed by Delarue and his co-authors  in several instances \cite{DM25, DeTc20, DeVa25}. In MFG,  the presence of the common noise ensures the uniqueness of the MFG equilibrium, while this uniqueness might not hold without common noise. Another property of the common noise, studied by Maillet \cite{Ma23} and by Delarue, Maillet and Tanr\'e  \cite{DMT24}, is that it ensures the uniqueness of the invariant measure (understood in a suitable sense) for some McKean-Vlasov equations. 

Our main results are the existence and the uniqueness of a weak solution (understood in a suitable sense, see Definitions \ref{def.weaksolNIN} and \ref{def.weaksolNINCN}) under condition (A) or condition (B).  Roughly speaking, the proof relies on the remark that the quantity $Y= X-\E[X|W^0]$ formally satisfies a ``regular'' McKean-Vlasov equation (parametrized by the process $X^0= \E[X|W^0]$), while the equation  for $X^0= \E[X|W^0]$, once $Y$ is built, becomes a SDE with an additive noise (the common noise) and a bounded drift, which can be solved by a Girsanov transform. This program is achieved in Theorem \ref{thm.existenceNIN} for case (A) (without individual noise) and  in Theorem \ref{thm.existenceCN} for the  more delicate case  (B) (with individual noise). We also prove the convergence of  associated particle systems. In case (A), the ``natural'' system takes the form  of the SDE
$$
X^{i,N}_t  = X^{i}_0 + \int_0^t b(X^{i,N}_s, \frac{1}{N}\sum_{j=1}^N X^{j,N}_s)ds + \sigma^0 W^0_t, 
$$
where the initial conditions $(X^{i}_0)$ are i.i.d. according to $\mu_0$ and independent from $W^0$. The convergence (Theorem \ref{propchaosbissansbruitindiv}) then relies on the continuity of the law of the solution to \eqref{eq.barXsigma=0} with respect to the initial measure (Proposition \ref{prop.continuity}).  The convergence rate we obtain in this setting is quite slow. If we look instead at  the ``well-prepared'' particle system 
\begin{align*}
   & X^{i,N}_t=X^i_0+\int_0^t b(X^{i,N}_s,X^{0,N}_s)ds+\sigma^0 W^0_t\mbox{ with }\\
   & X^{0,N}_s= \frac 1N\sum_{j=1}^N X^{j,N}_s+ \E[X^1_0]- \frac 1N\sum_{j=1}^NX^j_0.
\end{align*}
the convergence is more direct and the convergence rate much sharper (Theorem \ref{thm.cvparticleNOIN}). The key idea is that, in this new system, the second variable $X^{0,N}$ in the drift coefficient has the same initial condition as $X^0$ in the limit equation, which allows one to go from the particle system to the McKean-Vlasov equation by Girsanov's theorem. The well-prepared particle system in which $X^{0,N}_s$ differs from $\frac 1N\sum_{j=1}^N X^{j,N}_s$ by some centred contribution can be interpreted as a  control variate variance reduction method.

In case (B), the convergence of the particle system turns out to be much more challenging and we only manage in Theorem \ref{thm.cvparticleWIN} to prove it for the ``well-prepared'' particle system: 
\begin{align*}
& X^{i,N}_t=  X^i_0+ \int_0^t b\Bigl(X^{i,N}_s,X^{0,N}_s\Bigr)ds + \sigma W^i_t+\sigma^0 W^0_t, \\
& X^{0,N}_s = \frac{1}{N} \sum_{j=1}^N X^{j,N}_t+ \bar \mu_0-\frac{1}{N} \sum_{j=1}^N X^{j}_0- \frac{\sigma}{N} \sum_{j=1}^N W^i_t
\end{align*}
Note that with this choice, $X^{0,N}$ has the same initial condition and the same additive noise as $X^0=\E[X|W^0]$ in the limit system. In this regime, we follow the approach by Jabir \cite{Jab19} to obtain an estimate of order $((1+k)/N)^{1/2}$ for the total variation distance between the law of $(X^{0,N},X^{1,N},\cdots,X^{k,N})$  in the above system and the law of $X^0=\E[X|W^0]$ together with $k$ conditionally independent copies distributed according to the conditional law of $X$ given $X^0$. 

We conclude the introduction with a few open problems. The first open question concerns the existence of strong solutions for \eqref{eq.barXsigma=0}: indeed, we only build a weak solution, in which the conditional expectation is adapted to a  filtration possibly larger than the one generated by $W^0$. A second open question is the convergence of the ``natural'' particle system in the presence of the individual noise (see also Remark \ref{rem.propag}). A last very intriguing open problem is to find other  structure conditions ensuring that the common noise is regularizing for a general conditional McKean-Vlasov equation of the form 
$$
X_t =  X_0+ \int_0^t  b(X_s, \mathcal L\left[X_s|W^0\right]) ds +\sigma W_t+ \sigma^0dW^0_t.
$$
Indeed, we use that the interaction is through the conditional expectation in \eqref{eq.barXsigma=0} in a crucial way throughout the paper. An example in the appendix (where the interaction is through the conditional variance) shows that existence may fail without some structure conditions. 
\bigskip 

{\it Organization of the paper:} In Section \ref{sec.1}, we discuss the problem without individual noise (i.e., Case A): we show the existence and uniqueness of the solution (Subsection \ref{subsec2.1}), the continuity with respect to the initial measure (Subsection \ref{subsec2.2}) and the propagation of chaos (Subsection \ref{subsec2.3}). Section \ref{sec.3} is dedicated to the problem with both individual and common noises (Case B): the existence and the uniqueness of the solution are obtained in Subsection \ref{subsec3.1}, while the propagation  of chaos is proved of Subsection \ref{subsec3.2}. 
\bigskip

{\it Notation:} 
\begin{itemize}
  \item Given a metric space $E$, we denote by ${\mathcal P}(E)$ the set of Borel probability measures on $E$.
\item We denote by $\mathcal P_1(\R^d)$ the set of Borel probability measures on $\R^d$ with finite first order moment. It is endowed with the Wasserstein distance $W_1$. 
For $\mu_0\in{\mathcal P}_1(\R^d)$, we set $\bar\mu_0=\int_{\R^d}x\mu_0(dx)$. 
\item Given a measure $\mu$ on a set $E$ and a measurable map $\phi:E\to F$, we denote by $\phi\sharp \mu$ the image of $\mu$ by $\phi$. 
\item We denote by ${\mathcal C}=C([0,T], \R^d)$ the space of continuous paths from $[0,T]$ to $\R^d$.
\item We denote by ${\mathcal W}$  the Wiener measure on ${\mathcal C}$ i.e. the distribution of a $d$-dimensional Brownian motion $(B_t)_{t\in[0,T]}$ and, for $x\in\R^d$, by ${\mathcal W}^{x}$ the Wiener measure on ${\mathcal C}$ with initial condition $x$, i.e. the distribution of $(x +B_t)_{t\in[0,T]}$.
\item Given a set $E$ and a real valued map $f$ on $E$, $\|f\|_\infty$ is the supremum of $f$ on $E$, or the ess-sup of $f$ on $E$, depending on the context. 
\item Given a random vector $X$, $\mathcal L(X)$ stands for the law of $X$. 
\item $Id$ is the identity map on $\R^d$. 
\end{itemize}

\section{The problem with no individual noise} \label{sec.1}

We first consider Equation \eqref{eq.barXsigma=0} when there is no individual noise: $\sigma=0$. We look for a solution in the form 
\be\label{eq.barXsigma=0BIS}
\left\{\begin{array}{l}
\ds X_t =  X_0+ \int_0^t b(X_s, X^0_s) dt + \sigma^0 W^0_t, \qquad t\in [0,T], \\
\ds X^0_t= \E\left[X_t\ |\ W^0\right], \quad t\in [0,T], \qquad \mathcal L( X_0)= \mu_0. 
\end{array}\right.
\ee
Throughout this section we suppose that 
\be\label{Hyp.noCN} \begin{array}{c}
\mbox{\rm $\sigma^0$ is positive, $\mu_0\in \mathcal P_1(\R^d)$ and $b:\R^d\times \R^d\to \R^d$ is Borel measurable, bounded,}\\ 
\mbox{\rm and Lipschitz continuous with respect to the first variable,}\\
\mbox{\rm uniformly in the second variable.} 
\end{array} \ee

\begin{definition}[Weak solution] \label{def.weaksolNIN} 
A weak solution to \eqref{eq.barXsigma=0BIS} consists of a tuple $(\Omega, (\mathcal F_t), \F,{\mathbb P}, W^0,X^0,X)$, where  $(\Omega, (\mathcal F_t), \F,{\mathbb P})$ is a filtered probability space with a complete filtration supporting $(W^0,X^0,X)$, such that  
\begin{itemize}
\item[(i)] $W^0$ is a $d-$dimensional $(\mathcal F_t)-$Brownian motion,  the processes $X$ and $X^0$ are $(\mathcal F_t)-$adapted with values in $\R^d$, 

\item[(ii)] $X_0$ is independent of $(W^0, X^0)$ and distributed according to $\mu_0$, 

\item[(iii)] The state equation holds: 
$$
 X_t =  X_0+ \int_0^t b(X_s, X^0_s) ds +\sigma^0 W^0_t.
 $$

\item[(iv)]  $W^0$ is adapted to the filtration generated by $X^0$, and  $X^0_t= \E\left[X_t\ |\ X^0\right]$ for any $t\in [0,T]$. 
\end{itemize}
\end{definition}

The notion of weak solution is inspired by the notion of equilibrium solution  for mean field games  with a common noise in the paper by Carmona, Delarue and Lacker \cite{CaDeLa16}. We note that condition (iv) differs from the equality $X^0_t= \E\left[X_t\ |\ W^0\right]$ in \eqref{eq.barXsigma=0BIS} and is slightly more precise than the equality $X^0_t= \E\left[X_t\ |\ W^0,X^0\right]$ in \cite{CaDeLa16}:  this  sharpened condition is needed to ensure the uniqueness of the solution. Let us finally underline that we do not know if there exists a strong solution, i.e., a solution for which the filtrations of $X^0$ and $W^0$ coincide. 

\subsection{Existence and uniqueness of a  solution} \label{subsec2.1}

\begin{theorem}\label{thm.existenceNIN} Under Assumption \eqref{Hyp.noCN}, there exists a weak solution to \eqref{eq.barXsigma=0BIS}. In addition, this solution is unique in law. 
\end{theorem}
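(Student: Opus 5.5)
Following the decomposition announced in the introduction, I would write $X_t = X^0_t + Y_t$. Taking the conditional expectation given $X^0$ in the state equation, and using that $W^0$ is $X^0$-measurable while $X_0$ is independent of $X^0$, formally gives
\begin{equation*}
X^0_t=\bar\mu_0+\int_0^t \E[b(X_s,X^0_s)\,|\,X^0]\,ds+\sigma^0W^0_t .
\end{equation*}
Subtracting this from the state equation, $Y$ solves
\begin{equation*}
Y_t=X_0-\bar\mu_0+\int_0^t\big(b(Y_s+X^0_s,X^0_s)-\E[b(Y_s+X^0_s,X^0_s)|X^0]\big)ds .
\end{equation*}
The common noise has disappeared from this equation. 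Conditionally on the path $x^0=X^0(\omega)\in\C$, it is a deterministic McKean-Vlasov ODE driven by the random initial condition $Y_0=X_0-\bar\mu_0\sim\mu_0(\cdot+\bar\mu_0)$:
\begin{equation*}
\dot y_t(y)=b(y_t+x^0_t,x^0_t)-\int b(y_t(z)+x^0_t,x^0_t)\,\mu_0(dz+\bar\mu_0).
\end{equation*}
Since $b$ is Lipschitz in its first variable uniformly in the second, a standard Picard/Gronwall argument on flows $y\mapsto y_t(y)$ gives a unique solution for every fixed $x^0\in\C$. I would denote it $\Phi(x^0)_t(y)$, or equivalently write its flow of laws $m_t[x^0]$. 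Measurability of $x^0\mapsto\Phi(x^0)$ follows from the Picard construction. Non-anticipativity, meaning $\Phi(x^0)_t$ depends only on $x^0|_{[0,t]}$, is immediate.

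\textbf{Step 2 (equation for $X^0$).} Plugging $\Phi$ back in, $X^0$ must solve the path-dependent SDE
\begin{equation*}
X^0_t=\bar\mu_0+\int_0^t \beta_s(X^0)\,ds+\sigma^0W^0_t,\qquad \beta_s(x^0)=\int b\big(\Phi(x^0)_s(y)+x^0_s,x^0_s\big)\,\mu_0(dy+\bar\mu_0).
\end{equation*}
The drift $\beta$ is bounded by $\|b\|_\infty$, progressively measurable and non-anticipative, but possibly very irregular in $x^0$ because $b$ is only measurable in its second variable. This is exactly where the nondegenerate common noise helps. Since $\sigma^0>0$, Girsanov's theorem produces a weak solution: start with $X^0=\bar\mu_0+\sigma^0 B$ for a Brownian motion $B$, then change the measure with the Novikov-bounded density $\exp\big(\int (\beta/\sigma^0)dB-\frac12\int|\beta/\sigma^0|^2\big)$. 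The same theorem gives uniqueness in law of the weak solution, with $W^0=(X^0-\bar\mu_0-\int\beta)/\sigma^0$ adapted to the filtration of $X^0$.

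\textbf{Step 3 (existence).} On this space I enlarge by an independent $X_0\sim\mu_0$. I set $X_t=X^0_t+\Phi(X^0)_t(X_0-\bar\mu_0)$ and use the filtration generated by $X_0$ and $X^0$, completed. Then (i)-(iii) of Definition \ref{def.weaksolNIN} follow directly from the construction. For (iv), independence of $X_0$ and $X^0$ gives
\begin{equation*}
\E[X_t|X^0]=X^0_t+\int\Phi(X^0)_t(y)\,\mu_0(dy+\bar\mu_0).
\end{equation*}
The integral vanishes: it equals $0$ at $t=0$, and its time derivative is zero by the centred form of the ODE. One must also check that $W^0$ is an $(\mathcal F_t)$-Brownian motion. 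This holds because $X_0$ is independent of $X^0$, and $W^0$ is a Brownian motion in the filtration of $X^0$, being the Girsanov-transformed martingale part.

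\textbf{Step 4 (uniqueness), the main obstacle.} Given any weak solution, I must show $Y=X-X^0$ coincides with $\Phi(X^0)(X_0-\bar\mu_0)$. Here (iv) is essential. Because $W^0$ is $\sigma(X^0)$-measurable and $X_0$ is independent of $X^0$, conditioning the state equation on $X^0$ yields the Step 1 equation for $X^0$, with drift $\E[b(X_s,X^0_s)|X^0]$. Also, $Y$ satisfies the pathwise ODE $\dot Y_t=b(Y_t+X^0_t,X^0_t)-\E[b(X_t,X^0_t)|X^0]$, whose solution is a functional of $(X_0,X^0)$. The delicate point is to identify the conditional expectation term as the integral against the law of $X_0$. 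I would argue via a regular conditional distribution given $X^0=x^0$. Under it, $X_0$ still has law $\mu_0$ by independence, and $Y$ solves the ODE with some measurable conditional mean term $c(x^0)$. The Lipschitz-in-space fixed-point argument of Step 1 then forces $c(x^0)$ to be the unique consistent term, so $Y=\Phi(X^0)(X_0-\bar\mu_0)$ a.s. It follows that $X^0$ solves the SDE of Step 2, whose law is unique by Girsanov. Hence the law of $(X^0,X_0)$, and therefore of $(W^0,X^0,X)$, is unique.
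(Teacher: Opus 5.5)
Your proposal is correct and follows the paper's proof essentially step by step: your flow $\Phi$ is the map of Lemma \ref{lem.mathcalY} via $\mathcal Y^{\mu_0}(t,x,\bx)=\Phi(\bx)_t(x-\bar\mu_0)$, your drift $\beta_s$ is the paper's $\bar b^{\mu_0}(s,\bx_s,\bx)$, and existence and uniqueness in law of $X^0$ both go through Girsanov exactly as you describe. The only difference is the identification step in uniqueness, and both versions are valid. You disintegrate with respect to $X^0$, whereas the paper sets $\tilde Y_t=\mathcal Y^{\mu_0}(t,X_0,X^0)$, notes by independence of $X_0$ and $X^0$ that it satisfies the same equation as $Y$ (with $\E[\,\cdot\,|X^0]$ as correction term), and applies Gronwall to $\E|Y_t-\tilde Y_t|$, which avoids regular conditional distributions.
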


The key step towards the proof  is the following intermediate result, the proof of which is postponed after the one of the theorem. This result introduces the map ${\mathcal Y}^{\mu_0}$, which will allow us to make the distribution of the unique solution to \eqref{eq.barXsigma=0BIS} explicit. 
 Let us recall that $\mathcal C$ is the set of continuous maps from $[0,T]$ to $\R^d$ and that, given $\mu_0\in \mathcal P_1(\R^d)$, $\ds \bar \mu_0= \int_{\R^d} x\mu_0(dx)$. 
 
 \begin{lemma}\label{lem.mathcalY} For any $\mu_0\in \mathcal P_1(\R^d)$, there exists a unique Borel measurable map $\mathcal Y^{\mu_0}: [0,T]\times \R^d\times  {\mathcal C}\to \R^d$ such that, for any $(t,x,\bx)\in [0,T]\times \R^d\times {\mathcal C}$, 
$$
\mathcal Y^{\mu_0}(t,x, {\bx}) = x-\bar \mu_0+\int_0^t  b(\mathcal Y^{\mu_0}(s,x, {\bx})+{\bx}_s, {\bx}_s)ds - \int_0^t\int_{\R^d} b(\mathcal Y^{\mu_0}(s,y, {\bx})+{\bx}_s, {\bx}_s)\mu_0(dy)ds. 
$$
In addition, $\mathcal Y^{\mu_0}$ is nonanticipating in the sense that 
$$
\mathcal Y^{\mu_0}(t,x, {\bx}) = \mathcal Y^{\mu_0}(t,x, {\bx}(\cdot \wedge t))
$$
and the map $(x,\mu_0)\to \mathcal Y^{\mu_0}(t,x,\bx)$ is Lipschitz continuous on $\R^d\times \mathcal P_1(\R^d)$, uniformly with respect to $t\in [0,T]$ and $\bx \in {\mathcal C}$. Last, if $(y^1_t,\cdots,y^N_t)_{t\in[0,T]}$ solves 
\begin{equation}
   \forall t\in[0,T],\;y^i_t=x^i_0-\frac 1N\sum_{j=1}^Nx^j_0+\int_0^t  b(y^i_s+{\bx}_s, {\bx}_s)ds - \int_0^t\frac 1N\sum_{j=1}^Nb(y^j_s+{\bx}_s, {\bx}_s)ds,\;i\in\{1,\cdots,N\},\label{eq:discdyn}
\end{equation}
then 
$$
(y^1_t,\cdots,y^N_t)_{t\in[0,T]}=(\mathcal Y^{m^N_{\vec{x}_0}}(t,x^1_0, {\bx}),\cdots,\mathcal Y^{m^N_{\vec{x}_0}}(t,x^N_0, {\bx}))_{t\in[0,T]},
$$ 
where we recall the notation $m^N_{\vec{x}_0}=\frac 1N\sum_{i=1}^N\delta_{x^i_0}$. 
\end{lemma}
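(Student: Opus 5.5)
Fix $\bx\in\mathcal C$ and $\mu_0\in\mathcal P_1(\R^d)$, and view the equation as a fixed point problem for a function $y:[0,T]\times\R^d\to\R^d$ in the Banach-type space $E$ of measurable maps $y$ such that $t\mapsto y(t,x)$ is continuous and $\|y\|_E:=\sup_{t,x}|y(t,x)-x|<\infty$. Since $b$ is bounded and the $\mu_0$-integral $\int_{\R^d} b(y(s,z)+\bx_s,\bx_s)\mu_0(dz)$ is well defined for measurable bounded integrands, the map
$$
\Phi(y)(t,x)= x-\bar\mu_0+\int_0^t b(y(s,x)+\bx_s,\bx_s)ds-\int_0^t\int_{\R^d} b(y(s,z)+\bx_s,\bx_s)\mu_0(dz)ds
$$
sends $E$ into itself. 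Writing $L$ for the Lipschitz constant of $b$ in its first variable, we get $|\Phi(y)(t,x)-\Phi(y')(t,x)|\le 2L\int_0^t \sup_z|y(s,z)-y'(s,z)|ds$. With the weighted norm $\sup_{t,x}e^{-4Lt}|y-y'|$, $\Phi$ is a contraction, and Picard iteration gives existence and uniqueness. Borel measurability in $(t,x,\bx)$ jointly follows because the Picard iterates, started from $y^0(t,x)=x-\bar\mu_0$, are jointly measurable by induction. Indeed, $(s,x,\bx)\mapsto b(y^n(s,x,\bx)+\bx_s,\bx_s)$ is a composition of Borel maps, and Fubini handles both the time integral and the $\mu_0$ integral. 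The iterates converge uniformly, so the limit is Borel. The nonanticipating property also holds by induction on the iterates, since each $\Phi$ only uses $\bx$ on $[0,t]$; alternatively, uniqueness applied to $\bx(\cdot\wedge t)$ on $[0,t]$ gives it directly.

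For the Lipschitz estimate in $(x,\mu_0)$, the key step is a structural observation. Set $m(t)=\int y(t,z)\mu_0(dz)$; integrating the equation shows $m\equiv 0$. Moreover, $y(t,x)-y(t,x')=x-x'+\int_0^t[b(y(s,x)+\bx_s,\bx_s)-b(y(s,x')+\bx_s,\bx_s)]ds$, and Gronwall gives $|y(t,x)-y(t,x')|\le e^{LT}|x-x'|$. Note that the $\mu_0$ term cancels in this difference.

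To compare two measures $\mu_0,\nu_0$, we take an optimal $W_1$ coupling $\pi$ and write $y=\mathcal Y^{\mu_0}$, $y'=\mathcal Y^{\nu_0}$. Then
$$
|y(t,x)-y'(t,x)|\le |\bar\mu_0-\bar\nu_0|+L\int_0^t|y-y'|(s,x)ds+\int_0^t\Big|\int b(y(s,z)+\bx_s,\bx_s)\mu_0(dz)-\int b(y'(s,z')+\bx_s,\bx_s)\nu_0(dz')\Big|ds .
$$
We rewrite the last integrand with $\pi$ and bound it by $L\int (|y(s,z)-y(s,z')|+|y(s,z')-y'(s,z')|)\pi(dz,dz')\le L e^{LT}W_1(\mu_0,\nu_0)+L\sup_{z'}|y-y'|(s,z')$. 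Taking the supremum over $x$ and applying Gronwall yields $\sup_x|y-y'|(t,x)\le C W_1(\mu_0,\nu_0)$ with $C$ depending only on $L,T$, uniformly in $\bx$. Combined with the $x$-Lipschitz bound, this gives joint Lipschitz continuity on $\R^d\times\mathcal P_1(\R^d)$.

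Finally, for the particle statement, we take $\mu_0=m^N_{\vec x_0}$, so that $\bar\mu_0=\frac1N\sum_j x^j_0$ and the $\mu_0$-integral becomes $\frac1N\sum_j b(\mathcal Y^{m^N}(s,x^j_0,\bx)+\bx_s,\bx_s)$. Hence $(\mathcal Y^{m^N}(t,x^i_0,\bx))_i$ solves \eqref{eq:discdyn}. Equation \eqref{eq:discdyn} is an ODE with a drift that is Lipschitz in $(y^1,\dots,y^N)$ and measurable bounded in $t$, so its solution is unique, which identifies the two. One subtlety arises when some $x^i_0$ coincide; this is harmless because the identification is pointwise in $i$. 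The main obstacle is the careful handling of measurability in $\bx$ and in the $\mu_0$-integral within the function space; the contraction and stability estimates are routine.
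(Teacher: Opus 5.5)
Your proposal is correct and follows essentially the same route as the paper. Both arguments use a Banach fixed point for $\Phi$ with an exponentially weighted norm, Picard iterates for measurability and nonanticipativity, Gronwall for the Lipschitz bounds in $x$ and in $\mu_0$, and uniqueness for the finite-dimensional ODE to identify the particle dynamics. Your variations are minor and if anything tidier than the paper's write-up, where the $\mu_0$-integral terms are bounded somewhat loosely. You work with maps at bounded sup-distance from the identity, which makes the contraction constant independent of the first moment of $\mu_0$. You also take $\sup_x$ before Gronwall in the $\mu_0$-estimate and use an optimal coupling instead of Kantorovich duality.
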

For $\mu_0\in{\mathcal P}_1(\R^d)$, $t\in[0,T]$, $z\in\R^d$ and $\bx\in{\mathcal C}$, let us set 
\be\label{defbarb}
\bar b^{\mu_0}(t,z,{\bx}) = \int_{\R^d} b(\mathcal Y^{\mu_0}(t, y, {\bx})+z,z) \mu_0(dy). 
\ee
The function $\bar b^{\mu_0}$ is bounded and inherits the nonanticipating property of $\mathcal Y^{\mu_0}$ stated in Lemma \ref{lem.mathcalY}. \begin{remark}\label{rem:PQ}
We show at the end of the proof of Theorem \ref{thm.existenceNIN} that the distribution $P^{\mu_0}(d{\bw}^0,d{\bx}^0,d{\bx})$ of the weak solution $(W^0, X, X^0)$ of \eqref{eq.barXsigma=0BIS} is the image of the measure 
$$
\mu_0(dx_0)\exp\left\{ 
\int_0^T \bar b^{\mu_0}(s, {\bx}^0_s, {\bx}^0)\cdot d{\bx}^0_s 
- \frac12 \int_0^T |\bar b^{\mu_0}(s, {\bx}^0_s, {\bx}^0)|^2ds 
\right\}{\mathcal W}^{\bar\mu_0}(d{\bx}^0)
$$ 
by the map 
$$
\R^d\times {\mathcal C}\mapsto \left({\bx}^0-\bar\mu_0-\int_0^\cdot\bar b^{\mu_0}(s,{\bx}^0_s,{\bx}^0)ds,{\bx}^0,\mathcal Y^{\mu_0}(\cdot,x_0, {\bx}^0)+{\bx}^0\right)\in {\mathcal C}^3.
$$
Note that the marginal distribution $Q^{\mu_0}(d{\bx}^0)=\int_{\bw^0,\bx}P^{\mu_0}(d{\bw}^0,d{\bx}^0,d{\bx})$  determines $P^{\mu_0}$. Indeed $P^{\mu_0}$ (resp. the distribution $\int_{\bx\in\C}P^{\mu_0}(d{\bw}^0,d{\bx}^0,d{\bx})$ of the $(W^0,X^0)$ components) is the image of $\mu_0(dx_0)Q^{\mu_0}(d\bx^0)$ (resp. $Q^{\mu_0}$) by a deterministic function depending on the initial distribution $\mu_0$ through $(\bar b^{\mu_0},{\mathcal Y}^{\mu_0})$ (resp. $\bar b^{\mu_0}$).
\end{remark}

To simplify the notation, we assume from now on and without loss of generality that $\sigma^0=1$. 

\begin{proof}[Proof of Theorem \ref{thm.existenceNIN}] 
  Let us  fix a probability space $(\Omega, \mathcal F, \F, {\mathbb Q})$ supporting a standard Brownian motion $B$ and an $\F_0-$measurable random variable $X_0$ with law $\mu_0$ independent of $B$. Note that  $\bar \mu_0=\E[X_0]$. Recalling that $\bar b^{\mu_0}$, introduced in \eqref{defbarb}, is bounded and inherits the nonanticipating property of $\mathcal Y^{\mu_0}$ stated in Lemma \ref{lem.mathcalY}, we can define the probability measure ${\mathbb P}$ equivalent to ${\mathbb Q}$  by 
\be\label{defPP}
\frac{d{\mathbb P}}{d{\mathbb Q}} = \exp\left\{ \int_0^T \bar b^{\mu_0}(s, \bar \mu_0+B_s, \bar \mu_0 +B)\cdot dB_s - \frac12 \int_0^T |\bar b^{\mu_0}(s, \bar \mu_0+B_s,\bar \mu_0+ B)|^2ds \right\}. 
\ee
By Girsanov Theorem,  under ${\mathbb P}$, the process
\be\label{defbeta}
\left(W^0_t = B_t -\int_0^t \bar b^{\mu_0}(s, \bar \mu_0+B_s, \bar \mu_0+B)ds\right)_{t\in [0,T]}
\ee
is a Brownian motion independent of $X_0$ which is still distributed according to $\mu_0$. We also set 
$$
X_t:= \mathcal Y^{\mu_0}(t, X_0, \bar \mu_0+B)+\bar \mu_0+B_t, \qquad X^0_t= \bar \mu_0+ B_t.
$$
Our aim is to prove that $(\Omega,  \mathcal F, \F, {\mathbb P}, X, X^0)$ is a weak solution of  \eqref{eq.barXsigma=0BIS}. For this we first note that  
\begin{align*}
\E_{{\mathbb P}}\left[ \mathcal Y^{\mu_0}(t, X_0, \bar \mu_0+B)\ |\ B\right] & = \E_{\mathbb P}[X_0\ |\ B] -\bar \mu_0 + \int_0^t\Big( \E_{\mathbb P}\left[ b(\mathcal Y^{\mu_0}(s, X_0, \bar \mu_0+B)+\bar \mu_0+B_s,\bar \mu_0+B_s)\ |\ B\right]\\&-\int_{\R^d}b(\mathcal Y^{\mu_0}(t, y, \bar \mu_0+B)+\bar \mu_0+B_s,\bar \mu_0+B_s)\mu_0(dy)\Big)ds =0, 
\end{align*}
since $X_0$ is independent of $B$ (even under ${\mathbb P}$) and has law $\mu_0$. Thus
\be\label{relbarXB}
X^0_t =\bar \mu_0+ B_t= \E_{\mathbb P}\left[\mathcal Y^{\mu_0}(t, X_0, \bar \mu_0+B)+\bar \mu_0+ B_t |\ B\right]= \E_{\mathbb P}\left[ X_t\ |\ B\right].
\ee
Therefore, from the definition of $X$ and $X^0$,  
$$
b(\mathcal Y^{\mu_0}(s,  X_0, \bar \mu_0+B)+ \bar \mu_0 +B_s,\bar \mu_0 +B_s)= b(X_s,X^0_s)
$$
and 
$$
\int_{\R^d} b(\mathcal Y^{\mu_0}(s,  y, \bar \mu_0+B)+ \bar \mu_0 +B_s,\bar \mu_0 +B_s)\mu_0(dy) = \E_{\mathbb P}\left[ b(X_s,X^0_s)\ |\ B\right].
$$
We infer from the definition of $X$, the equation satisfied by $\mathcal Y^{\mu_0}$, \eqref{defbarb} and the definition of $W^0$, that 
\begin{align*}
X_t &= \mathcal Y^{\mu_0}(t, X_0-\bar \mu_0, \bar \mu_0+B)+\bar \mu_0+B_t\\
&  = X_0-\bar \mu_0 +\int_0^t b(X_s,X^0_s)ds -\int_0^t \bar b^{\mu_0}(s, \bar \mu_0+B_s, \bar \mu_0+B)ds
  \\
& \qquad + \bar \mu_0 + W^0_t + \int_0^t \bar b^{\mu_0}(s, \bar \mu_0+B_s, \bar \mu_0+B)ds \\ 
&= X_0 +\int_0^t b(X_s,X^0_s)ds+ W^0_t.
\end{align*}
To complete the proof we note that, by the first equality in \eqref{relbarXB}, $\mathcal F^{X^0}=\mathcal F^B$  and therefore  
$$
\E\left[ X_t\ |\ X^0\right]= \E\left[ X_t\ |\ B\right]=X^0_t.
$$
This proves that $(\Omega, \mathcal F, \F, {\mathbb P}, W^0,X^0,X)$ satisfies all the conditions of Definition \ref{def.weaksolNIN}. \\

Let us now prove the uniqueness of the process.  Let $(\Omega, (\mathcal F_t), \F,{\mathbb P}, W^0,X^0,X)$ be a weak solution to \eqref{eq.barXsigma=0BIS} in the sense of Definition \ref{def.weaksolNIN}. We first claim that for $\mathcal Y^{\mu_0}$ defined in Lemma \ref{lem.mathcalY} ${\mathbb P}-$a.s. and for any $t\in [0,T]$, 
$$
Y_t:= X_t-X^0_t= \mathcal Y^{\mu_0}(t, X_0, X^0), 
$$ 
which, with Definition \ref{def.weaksolNIN} (iii)-(iv), implies \begin{equation}
   X^0_t=\E[X_t|X^0]=\bar\mu_0+\int_0^t \E[b(Y_s+X^0_s,X^0_s)|X^0]ds+W^0_t.\label{eq;x0}
\end{equation}Indeed, $Y$ satisfies, for any $t\in [0,T]$: 
$$
Y_t= X_0-\bar \mu_0 +\int_0^t b(Y_s+X^0_s, X^0_s)ds -\int_0^t\E\left[ b(Y_s+X^0_s, X^0_s)\ |\ X^0\right]ds .
$$
Let $\tilde Y_t= \mathcal Y^{\mu_0}(t, X_0, X^0)$. Then 
\begin{align*}
&\E\left[ b(\tilde Y_s+X^0_s, X^0_s)\ |\ X^0\right]  =
\E\left[ b(\mathcal Y^{\mu_0}(s, X_0, X^0) +X^0_s, X^0_s)\ |\ X^0\right] \\
&\qquad =  \int_{\R^d} b(\mathcal Y^{\mu_0}(s, y, X^0)+X^0_s, X^0_s) \mu_0(dy),
\end{align*}
since $X_0$ is independent of $X^0$ and is distributed according to $\mu_0$. 
Thus, $\tilde Y$ satisfies 
$$
\tilde Y_t = X_0-\bar \mu_0 +\int_0^t b(\tilde Y_s+ X^0_s  ,X^0_s)ds - \int_0^t \E\left[ b(\tilde Y_s+X^0_s, X^0_s)\ |\ X^0\right] ds. 
$$
Denoting by $K$ the Lipschitz constant of $b$ in its first argument,  we find therefore: 
\begin{align*}
|Y_t-\tilde Y_t| & \leq \int_0^t |b(Y_s+X^0_s,X^0_s)-b(\tilde Y_s+X^0_s,X^0_s)|ds \\
& \qquad + 
\int_0^t \E\left[ |b(Y_s+X^0_s,X^0_s)-b(\tilde Y_s+X^0_s,X^0_s)| \ |  X^0\right]ds  \\
& \leq K \int_0^t|Y_s-\tilde Y_s| ds + K \int_0^t \E\left[ |Y_s-\tilde Y_s|  \ |  X^0\right] ds 
\end{align*}
We take the expectation and infer by Gronwall's lemma that, ${\mathbb P}-$a.s., $Y_t=\tilde Y_t$ for any $t\in [0,T]$. 

With \eqref{eq;x0}, this implies that $X^0$ satisfies
\begin{align}
X^0_t
&= \bar \mu_0 +\int_0^t \int_{\R^d} b( \mathcal Y^{\mu_0}(s, y, X^0)+X^0_s,X^0_s) \mu_0(dy)ds + W^0_t \notag\\ 
& = \bar \mu_0 +\int_0^t  \bar b^{\mu_0}(s, X^0_s,X^0)ds +W^0_t,\label{eq:dynx^0}
\end{align}
where $\bar b^{\mu_0}$ is defined by \eqref{defbarb}.
Let $\tilde {\mathbb P}$ be such that 
\begin{align*}
\frac{d\tilde {\mathbb P}}{d {\mathbb P}} = Z_T&:=\exp\left\{ -\int_0^T \bar b^{\mu_0}(s, X^0_s, X^0)\cdot dW^0_s - \frac12 \int_0^T |\bar b^{\mu_0}(s, X^0_s, X^0)|^2ds \right\} \\
& =\exp\left\{ -\int_0^T \bar b^{\mu_0}(s, X^0_s, X^0)\cdot dX^0_s + \frac12 \int_0^T |\bar b^{\mu_0}(s, X^0_s, X^0)|^2ds \right\}. 
\end{align*}
By Girsanov's theorem,  $(X^0-\bar \mu_0)_{t\in [0,T]}$ is a Brownian motion under $\tilde {\mathbb P}$.  Let $A$ be a Borel subset of ${\mathcal C}^3$. Then 
\begin{align*}
&{\mathbb P}\left[ (W^0,X^0,X)\in A\right]  = \E_{\tilde{\mathbb P}}\left[ {\bf 1}_{A}(X^0-\bar \mu_0 -\int_0^\cdot  \bar b^{\mu_0}(s, X^0_s,X^0)ds,X^0 ,  \mathcal Y^{\mu_0}(\cdot,X_0, X^0)+X^0) Z_T^{-1}\right] \\ 
& = \E_{\tilde{\mathbb P}}\bigg[ {\bf 1}_{ A} \left(X^0-\bar \mu_0 -\int_0^\cdot  \bar b^{\mu_0}(s, X^0_s,X^0)ds ,X^0,  \mathcal Y^{\mu_0}(\cdot,X_0, X^0)+X^0\right)
\\&\phantom{= \E_{\tilde{\mathbb P}}\bigg[ }\times\exp\left\{ 
\int_0^T \bar b^{\mu_0}(s, X^0_s, X^0)\cdot dX^0_s 
- \frac12 \int_0^T |\bar b^{\mu_0}(s, X^0_s, X^0)|^2ds 
\right\}
\bigg] .
\end{align*}
As $X_0$ and $X^0$ are independent under $\tilde {\mathbb P}$, the law of $(X_0, X^0)$ under $\tilde {\mathbb P}$ is the product of $\mu_0$ and ${\mathcal W}^{\bar \mu_0}$. Hence the law of $(W^0,X^0,X)$ is the image of 
$$
\mu_0(dx_0)\exp\left\{ 
\int_0^T \bar b^{\mu_0}(s, {\bx}^0_s, {\bx}^0)\cdot d{\bx}^0_s 
- \frac12 \int_0^T |\bar b^{\mu_0}(s, {\bx}^0_s, {\bx}^0)|^2ds 
\right\}{\mathcal W}^{\bar\mu_0}(d{\bx}^0)
$$ 
by 
$$
\R^d\times {\mathcal C}\ni(x_0,{\bx}^0)\mapsto \left({\bx}^0-\bar\mu_0-\int_0^\cdot\bar b^{\mu_0}(s,{\bx}^0_s,{\bx}^0)ds,{\bx}^0,{\bx}^0+\mathcal Y^{\mu_0}(\cdot,x_0, {\bx}^0)\right)\in{\mathcal C}^3.
$$
\end{proof}
\begin{proof}[Proof of Lemma \ref{lem.mathcalY}] The proof is standard. Fix $({\bx},\mu_0)\in {\mathcal C}\times \mathcal P_1(\R^d)$. Let $E$ be the Banach space of  continuous maps $z: [0,T]\times \R^d \to \R^d$ with at most a linear growth, endowed with the norm 
$$
\|z\| = \sup_{(t,x)} e^{-kt} \frac{|z(t,x)|}{1+|x|}, 
$$
for a constant $k>0$ to be chosen below. We define the map $\Phi:E\to E$ by 
$$
\Phi(z)(t,x)= x-\bar \mu_0 + \int_0^t b(z(s,x)+{\bx}_s, {\bx}_s)ds - \int_0^t\int_{\R^d} b(z(s,y)+{\bx}_s, {\bx}_s)\mu_0(dy)ds, 
$$
for any $z\in Z$. 
Then, denoting by $K$ the Lipschitz constant of $b$ in its first argument,  we find, for any $z_1,z_2\in E$,
$$
\frac{| (\Phi(z_1)-\Phi(z_2))(t,x)|}{1+|x|} =  2K \|z_1-z_2\|\int_0^t e^{ks}ds  \leq 2Kk^{-1} e^{kt}\|z_1-z_2\|. 
$$
Therefore 
$$
\|\Phi(z_1)-\Phi(z_2)\| \leq 2Kk^{-1} \|z_1-z_2\|, 
$$
which shows that $\Phi$ is a contraction for $k>2K$. Thus the map $(t,x)\to \mathcal Y^{\mu_0}(t,x, {\bx}) $ is well-defined for any $({\bx},\mu_0)\in {\mathcal C}\times \mathcal P_1(\R^d)$. It is Borel measurable with respect to $\bx$ because the map $\Phi$ above is and $\mathcal Y^{\mu_0}(t,x, {\bx}) $ can be obtained by iterating this map. 

For the regularity of $\mathcal Y^{\mu_0}$ with respect to the second variable, we note that, for any $x,x'\in \R^d$ (letting $z(t,x)= \mathcal Y^{\mu_0}(t,x,\bx)$), 
\begin{align*}
|z(t,x)-z(t,x')| & = \Bigl| x + \int_0^t b(z(s,x)+{\bx}_s, {\bx}_s)ds - x' - \int_0^t b(z(s,x)+{\bx}_s, {\bx}_s)ds\Bigr| \\ 
& \leq |x-x'| + K \int_0^t |z(s,x)-z(s,x')| ds . 
\end{align*}
We conclude by Gronwall's Lemma that 
$$
|z(t,x)-z(t,x')| \leq |x-x'| e^{Kt}, 
$$
which proves the claim. We now discuss the Lipschitz regularity of $\mu_0\to \mathcal Y^{\mu_0}$. Let $\mu_0,\mu_1\in \mathcal P_1(\R^d)$ and let us set $z_i(t,x)= \mathcal Y^{\mu_i}(t,x,\bx)$ (for $i=0,1$). We have 
\begin{align*}
|z_0(t,x)-z_1(t,x)| & =  \Bigl| -\bar \mu_0 +\int_0^t b(z_0(s,x)+{\bx}_s, {\bx}_s)ds - \int_0^t\int_{\R^d} b(z_0(s,y)+{\bx}_s, {\bx}_s)\mu_0(dy)ds\\
& \qquad + \bar \mu_1 - \int_0^t b(z_1(s,x)+{\bx}_s, {\bx}_s)ds + \int_0^t\int_{\R^d} b(z_1(s,y)+{\bx}_s, {\bx}_s)\mu_1(dy)ds \Bigr|  \\ 
& \leq {W}_1(\mu_0,\mu_1) + K \int_0^t |z_0(s,x)-z_1(s,x)| ds\\
& \qquad  +  \int_0^t\int_{\R^d} |b(z_0(s,y)+{\bx}_s, {\bx}_s)-b(z_1(s,y)+{\bx}_s, {\bx}_s)|\mu_0(dy)ds \\
& \qquad +  \int_0^t \Bigl| \int_{\R^d} b(z_1(s,y)+{\bx}_s, {\bx}_s)(\mu_1(dy)-\mu_0(dy))  \Bigr| ds
\end{align*}
As $y \to b(z_1(s,y)+{\bx}_s, {\bx}_s)$ is Lipschitz continuous with a Lipschitz constant $C$ independent of $t\in [0,T]$, $\bx$ and of the $\mu_i$, we infer that 
\begin{align*}
|z_0(t,x)-z_1(t,x)| & \leq (1+Ct) {W}_1(\mu_0,\mu_1) + 2 K \int_0^t |z_0(s,x)-z_1(s,x)| ds .
\end{align*}
We conclude by Gronwall's Lemma that 
$$
|z_0(t,x)-z_1(t,x)|  \leq C'  {W}_1(\mu_0,\mu_1) , 
$$
where the constant $C'$ does not depend on $t\in [0,T]$, $x\in \R^d$ and $\bx \in {\mathcal C}$.

Last, if $(y^1_t,\cdots,y^N_t)_{t\in[0,T]}$ solves \eqref{eq:discdyn}, then
$$\forall t\in[0,T],\;\sum_{i=1}^N|y^i_t-\mathcal Y^{m^N_{\vec{x}_0}}(t,x^i_0, {\bx})|\le 2K\int_0^t\sum_{i=1}^N|y^i_s-\mathcal Y^{m^N_{\vec{x}_0}}(s,x^i_0, {\bx})|ds.$$
By Gronwall's lemma, we conclude that
$$
(y^1_t,\cdots,y^N_t)_{t\in[0,T]}=(\mathcal Y^{m^N_{\vec{x}_0}}(t,x^1_0, {\bx}),\cdots,\mathcal Y^{m^N_{\vec{x}_0}}(t,x^N_0, {\bx}))_{t\in[0,T]}.
$$
\end{proof}

\subsection{Continuity of the solution w.r.t. the initial measure} \label{subsec2.2}

   We recall that $P^{\mu_0}$ denotes the distribution of the weak solution $(W^0, X, X^0)$ of \eqref{eq.barXsigma=0BIS} with initial distribution $\mu_0\in\mathcal P_1(\R^d)$ which is made explicit in Remark \ref{rem:PQ}, and that  $Q^{\mu_0}$ is the distribution of $X^0$. We discuss here the regularity property of the map $\mu_0\to P^{\mu_0}$.

   To compare $P^{\mu_0}$ with $P^{\hat\mu_0}$ for $\mu_0, \hat \mu_0\in \mathcal P_1(\R^d)$, and later quantify the propagation of chaos, we introduce the bounded Lipschitz or Fortet Maurier distance.
\begin{definition}\label{def:dBL}
  Let $({\mathcal Z},d_{\mathcal Z})$ be a metric space. For $k\in\N^*$, we endow ${\mathcal Z}^k$ with the distance $
  \sum_{i=1}^k d_{{\mathcal Z}}(z_i,\hat z_i)$. For $M\in (0,+\infty)$, the bounded Lipschitz or Fortet-Maurier distance $d_{{\rm B}_M{\rm L}}$ on ${\mathcal P}({\mathcal Z}^k)$ is defined by
  $$d_{{\rm B}_M{\rm L}}(\eta,\gamma)=\sup\left\{\Big|\int_{\mathcal Z^k}\phi(z)(\eta-\gamma)(dz)\Big|,\;\phi:{\mathcal Z}^k\to\R\mbox{ bounded by $M/2$ and $1$-Lipschitz}\right\},$$ for $\eta,\gamma\in{\mathcal P}({\mathcal Z}^k)$.
\end{definition}

\begin{prop}\label{prop.continuity} For any $\ep\in (0,\frac 12)$, there exists a constant $C_\ep>0$, depending on $\ep$, $\sigma^0$, $b$ and $T$ only, such that, for any $M>0$ and any $\mu_0, \hat \mu_0\in \mathcal P_1(\R^d)$, 
\be\label{eq:depmuk0}
d_{\rm B_ML}(P^{\mu_0},P^{\hat\mu_0})\leq C_\ep \left(W_1(\mu_0,\hat \mu_0)^{1/2}+ W_1(\mu_0,\hat \mu_0)^{1/2-\ep}\right) (W_1(\mu_0,\hat \mu_0)^{1/2} +M).
\ee  
More generally, if we  denote (with a slight abuse of notation) by $P^{\mu_0}(d\bx|\bx^0)$ the conditional distribution of the $X$ component given that $X^0=\bx ^0$ under $P^{\mu_0}$, then, for all $M>0$ and for all $\mu_0, \hat \mu_0\in \mathcal P_1(\R^d)$,
\begin{align}\label{eq:depmuk}
& d_{{\rm B}_M{\rm L}}\left(Q^{\mu_0}(d\bx^0)\prod_{i=1}^kP^{\mu_0}(d\bx^i|\bx^0),Q^{\hat\mu_0}(d\bx^0)\prod_{i=1}^kP^{\hat\mu_0}(d\bx^i|\bx^0)\right) \notag\\
& \qquad\qquad\qquad\qquad  \leq C_\ep \left(W_1(\mu_0,\hat \mu_0)^{1/2}+ W_1(\mu_0,\hat \mu_0)^{1/2-\ep}\right)  \left(k W_1(\mu_0,\hat \mu_0)^{1/2}+ M\right).
\end{align}
\label{prop:couplmu}
\end{prop}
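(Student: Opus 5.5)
We will use the explicit description of $P^{\mu_0}$ from Remark \ref{rem:PQ}. The idea is to realize all the measures on a single reference space where $X^0-\bar\mu_0$, $\hat X^0-\bar{\hat\mu}_0$ and $X^i_0$ are fixed objects, and to let the two measures differ only through a Girsanov density and a deterministic map.

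\textbf{Coupling.} Fix a Brownian motion $B$ and an optimal $W_1$-coupling $(X^i_0,\hat X^i_0)_{i\le k}$ of $(\mu_0,\hat\mu_0)$, i.i.d. in $i$ and independent of $B$. Set
\[
\bx^0=\bar\mu_0+B,\qquad \hat\bx^0=\bar{\hat\mu}_0+B .
\]
Under the reference law $\mathbb Q$, the measure $Q^{\mu_0}\prod_i P^{\mu_0}(d\bx^i|\bx^0)$ is the law of $\bigl(\bx^0,(\mathcal Y^{\mu_0}(\cdot,X^i_0,\bx^0)+\bx^0)_i\bigr)$ weighted by
\[
Z=\mathcal E\Bigl(\int \bar b^{\mu_0}(s,\bx^0_s,\bx^0)\cdot dB_s\Bigr)_T .
\]
The same holds for $\hat\mu_0$, with a density $\hat Z$. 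The first estimate \eqref{eq:depmuk0} then follows from the case $k=1$, after also handling the $W^0$ component in the same way.

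\textbf{Splitting.} For a test function $\phi$ bounded by $M/2$ and $1$-Lipschitz, write
\[
\E_{\mathbb Q}[\phi(\cdot)Z]-\E_{\mathbb Q}[\phi(\hat\cdot)\hat Z]
=\E_{\mathbb Q}\bigl[(\phi(\cdot)-\phi(\hat\cdot))Z\bigr]+\E_{\mathbb Q}\bigl[\phi(\hat\cdot)(Z-\hat Z)\bigr].
\]
\emph{First term.} By Lemma \ref{lem.mathcalY} the map $(x,\mu)\mapsto\mathcal Y^\mu(t,x,\bx)$ is Lipschitz, which controls the dependence on $\mu$ and on $x$. The dependence of $\mathcal Y$ on the path, however, is \emph{not} Lipschitz, since $b$ is merely measurable in its second argument. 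This mismatch in the path argument is the main obstacle, treated below. The pathwise difference $|\bar\mu_0-\bar{\hat\mu}_0|\le W_1$ contributes at most $(1+k)W_1$. Using $\E_{\mathbb Q}[Z^2]\le C$ and Cauchy--Schwarz, the first term is bounded by $C(kW_1^{1/2}+\dots)\,W_1^{1/2}$, which accounts for the factor $kW_1^{1/2}$.

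\emph{Second term.} This term is at most $\tfrac M2\,\E|Z-\hat Z|$. Since $\E|Z-\hat Z|\le C\,\|\bar b^{\mu_0}(s,\bx^0_s,\bx^0)-\bar b^{\hat\mu_0}(s,\hat\bx^0_s,\hat\bx^0)\|_{L^2(ds\,d\mathbb Q)}$, with constants uniform because the drifts are bounded, it suffices to bound this $L^2$ norm by $C_\ep(W_1^{1/2}+W_1^{1/2-\ep})$. The pieces of $\bar b^{\mu_0}-\bar b^{\hat\mu_0}$ coming from the first argument of $b$, i.e. from $\mathcal Y^{\mu_0}$ versus $\mathcal Y^{\hat\mu_0}$ and $\mu_0$ versus $\hat\mu_0$, are Lipschitz in $W_1$ by Lemma \ref{lem.mathcalY}.

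\textbf{Main obstacle: the path shift.} The genuinely hard term is the shift $\bx^0\to\bx^0+h$, where $h=\bar{\hat\mu}_0-\bar\mu_0$ is a constant vector with $|h|\le W_1$. This shift enters the second argument of $b$, in which $b$ is only measurable. We will exploit that $B$ is Brownian, so its law is smooth under translations of the starting point.

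\emph{First attempt.} Compare $\E\,g(\bx^0)$ with $\E\,g(\bx^0+h)$ for bounded nonanticipating $g$. The laws $\mathcal W^{\bar\mu_0}$ and $\mathcal W^{\bar{\hat\mu}_0}$ are singular, so a direct Girsanov comparison is impossible. We instead use a time-dependent Cameron--Martin shift $h\,\min(s/\delta,1)$, whose Girsanov cost is $|h|\delta^{-1/2}$. On $[0,\delta]$ we only use boundedness, at a cost of $\delta^{1/2}$ in $L^2(ds)$. The remaining error after time $\delta$ comes from the path discrepancy in $\mathcal Y$, which is only measurable, and this is also controlled by Girsanov. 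Optimizing over $\delta$ gives a bound of order $W_1^{1/2}$, and the $\ep$-loss comes from Hölder/moment exponents in the Girsanov density: use $L^{p}$ with $p$ close to $1$, losing $W_1^{-\ep}$.

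\emph{Fallback.} If the direct shift does not close, we would instead pass through a time-change or an intermediate measure defined with the drift frozen on $[0,\delta]$. The bounds on the second and first terms then combine to give \eqref{eq:depmuk}.
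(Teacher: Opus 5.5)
Your synchronous coupling $\hat\bx^0=\bx^0+h$ (with $h=\bar{\hat\mu}_0-\bar\mu_0$) is where the argument breaks, and this is a genuine gap. Both terms of your splitting compare quantities on the same $\omega$:
\begin{itemize}
\item the first contains $\mathcal Y^{\hat\mu_0}(t,y,\bx^0+h)-\mathcal Y^{\hat\mu_0}(t,y,\bx^0)$;
\item the second, through $\E_{\mathbb Q}|Z-\hat Z|\le C\|\bar b^{\mu_0}(s,\bx^0_s,\bx^0)-\bar b^{\hat\mu_0}(s,\bx^0_s+h,\bx^0+h)\|_{L^2(ds\,d\mathbb Q)}$, contains $b(\cdot,\bx^0_s+h)-b(\cdot,\bx^0_s)$ at every time $s\in[0,T]$.
\end{itemize}
Since $b$ is only measurable in its second variable, these differences are of order one on a set of times whose expected measure has no algebraic rate in $|h|$. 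Take $b(y,z)=\phi(y)g(z)$ with $g=\pm1$ oscillating ever faster near a point. Then $\E\int_0^T|g(\bx^0_s+h)-g(\bx^0_s)|ds$ decays slower than any power of $|h|$. For $g(z)=\mathrm{sign}\sin(\pi z_1/|h|)$ and $h$ parallel to $e_1$, it even equals $2T$, so no bound in terms of $\|b\|_\infty$, $K$, $T$ alone is possible.

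Consequently your bound $C(kW_1^{1/2}+\dots)W_1^{1/2}$ for the first term is unsupported. The Cameron--Martin paragraph does not fix the second term either. Comparing $\E g(\bx^0)$ with $\E g(\bx^0+h)$ is a comparison of laws. By contrast, $\E_{\mathbb Q}|Z-\hat Z|$ and $\E_{\mathbb Q}[|\phi(\cdot)-\phi(\hat\cdot)|Z]$ are $L^1$ distances under a fixed coupling, which no change of measure controls. You also never use Lemma \ref{lem.estiYY}, and the error ``after time $\delta$'' cannot be handled ``by Girsanov''.

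The missing idea is a coupling in which the two common-noise paths coincide after some time. Lemma \ref{lem.estiYY} then converts the non-Lipschitz path dependence of $\mathcal Y$ and $\bar b$ into an error of the size of that time. The paper uses reflection coupling up to the merging time $\tau$. This costs no Girsanov factor, and $\E_{\mathbb Q}[\tau^p]\le C_{p,T}|h|$ for $p>\frac12$. The heavy tail of $\tau$, handled by H\"older against $Z_T$, is what produces the $\ep$-loss.

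Your ramp does give a valid alternative if you put it into the coupling instead of into a comparison of expectations:
\begin{itemize}
\item take $\hat\bx^0_s=\bar{\hat\mu}_0+B_s-h\min(s/\delta,1)$, so that $\hat\bx^0=\bx^0$ on $[\delta,T]$;
\item let $\hat{\mathbb P}$ be the Girsanov measure under which $B$ has drift $(h/\delta){\bf 1}_{s<\delta}+\bar b^{\hat\mu_0}(s,\hat\bx^0_s,\hat\bx^0)$.
\end{itemize}
Lemmas \ref{lem.mathcalY} and \ref{lem.estiYY} then bound the Lipschitz term by $C(k+1)(W_1(\mu_0,\hat\mu_0)+\delta)$, with no H\"older step. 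The paper's entropy computation gives $H(\hat{\mathbb P}|\mathbb P)\le C(|h|^2/\delta+\delta+(W_1(\mu_0,\hat\mu_0)+\delta)^2)$. Choosing $\delta=|h|$ yields $d_{{\rm B}_M{\rm L}}\le C(kW_1(\mu_0,\hat\mu_0)+MW_1(\mu_0,\hat\mu_0)^{1/2})$ when $W_1(\mu_0,\hat\mu_0)\le 1\wedge T$. The other case is trivial since $d_{{\rm B}_M{\rm L}}\le M$. This implies the statement, even without the $\ep$-loss. As written, however, your proposal keeps the synchronous coupling and does not carry out this step.
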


Inequality \eqref{eq:depmuk} will play a central role in the proof of the propagation of chaos below. We will not need there the joint law of the process including the Brownian motion, which is the reason why we removed it from the estimate (in contrast with inequality \eqref{eq:depmuk}). To show Proposition \ref{prop:couplmu}, we need the following remark on the dependence  with respect to the noise ${\bx}$ of the map $\mathcal Y$ introduced in Lemma \ref{lem.mathcalY}. 

\begin{lemma}\label{lem.estiYY} Fix $\mu_0\in \mathcal P_1(\R^d)$. Let $\bx,\hat \bx\in {\mathcal C}$ and assume that there exists $\tau\in [0,T]$ such that $\bx(t)=\hat \bx(t)$ for any  $t\in [\tau, T]$. Then 
$$
|\mathcal Y^{\mu_0}(t,x, \bx)-\mathcal Y^{\mu_0}(t,x, \hat \bx)|\leq C(t\wedge \tau)\qquad \forall (t,x)\in [0,T]\times \R^d, 
$$
where $C$ only depends on the bound of $b$, its Lipschitz constant in the first variable and $T$. 
\end{lemma}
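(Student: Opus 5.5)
Write $z(t,x)=\mathcal Y^{\mu_0}(t,x,\bx)$ and $\hat z(t,x)=\mathcal Y^{\mu_0}(t,x,\hat\bx)$, and split the time interval at $\tau$.

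\emph{Times $t\le\tau$.} Here we only use boundedness of $b$. Each integrand in the equation of Lemma \ref{lem.mathcalY} is bounded by $\|b\|_\infty$, and the initial terms $x-\bar\mu_0$ are identical for $z$ and $\hat z$. Subtracting the two equations gives
$$|z(t,x)-\hat z(t,x)|\le 4\|b\|_\infty t .$$
This is the claim for $t\le \tau$, with $t\wedge\tau=t$.

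\emph{Times $t\ge\tau$.} Set $\delta(t)=\sup_{x}|z(t,x)-\hat z(t,x)|$. This is finite because, by the previous step, $\delta(\tau)\le 4\|b\|_\infty\tau$, and $\delta$ stays bounded on $[0,T]$ since both maps grow by at most $2\|b\|_\infty T$ from $x-\bar\mu_0$. Write the equation for $z$ between $\tau$ and $t$:
$$z(t,x)=z(\tau,x)+\int_\tau^t b(z(s,x)+\bx_s,\bx_s)\,ds-\int_\tau^t\int_{\R^d} b(z(s,y)+\bx_s,\bx_s)\mu_0(dy)\,ds,$$
and the same for $\hat z$ with $\hat\bx$. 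On $[\tau,T]$ we have $\bx_s=\hat\bx_s$, so the second arguments of $b$ coincide and the Lipschitz bound in the first variable (constant $K$) applies. This gives
$$|z(t,x)-\hat z(t,x)|\le \delta(\tau)+K\int_\tau^t|z(s,x)-\hat z(s,x)|\,ds+K\int_\tau^t\delta(s)\,ds .$$
Taking the supremum over $x$ yields $\delta(t)\le\delta(\tau)+2K\int_\tau^t\delta(s)\,ds$. By Gronwall's lemma,
$$\delta(t)\le 4\|b\|_\infty e^{2KT}\,\tau = C(t\wedge\tau)$$
for $t\ge\tau$.

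\emph{Main obstacle.} The only delicate point is that the mean-field term involves $z(s,y)$ for all $y$, which is why the argument works with the supremum $\delta$ rather than pointwise in $x$. The $L^\infty$ control in $x$ needed for this comes from the first step, and it holds uniformly because $b$ is bounded. The resulting constant $C=4\|b\|_\infty e^{2KT}$ depends only on $\|b\|_\infty$, $K$ and $T$, as claimed.
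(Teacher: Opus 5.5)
Your proof is correct and follows the same route as the paper. On $[0,t\wedge\tau]$ you bound the difference by $4\|b\|_\infty(t\wedge\tau)$ using only the boundedness of $b$, and on $[\tau,t]$ you use $\bx=\hat\bx$ to apply the Lipschitz bound in the first variable together with Gronwall's lemma. The only difference is how you handle the mean-field term. You work with $\delta(t)=\sup_x|z(t,x)-\hat z(t,x)|$, which is bounded and even Lipschitz in $t$, so a single Gronwall step suffices. The paper instead integrates against $\mu_0$, applies Gronwall to $\int|z(t,y)-\hat z(t,y)|\mu_0(dy)$, and then plugs this back into the pointwise inequality for a second Gronwall step.
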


\begin{proof}[Proof of Lemma \ref{lem.estiYY}] Let $z(t,y):=\mathcal Y^{\mu_0}(t,y, \bx)$ and $\hat z(t,y):=\mathcal Y^{\mu_0}(t,y, \hat \bx)$. Using the fact that $b$ is bounded and Lipschitz with constant $K$ in its first variable,  we have
\begin{align*}
|z(t,x)-\hat z(t,x)| & \leq \int_0^t  |b(z(s,x)+{\bx}_s, {\bx}_s)- b(\hat z(s,x)+\hat{\bx}_s, \hat{\bx}_s)|ds\\
&\qquad + \int_0^t\int_{\R^d} | b(z(s,y)+{\bx}_s, {\bx}_s)-b(\hat z(s,y)+\hat{\bx}_s, \hat{\bx}_s)|\mu_0(dy)ds \\ 
& \leq 4\|b\|_\infty (\tau\wedge t) + K\int_{\tau\wedge t}^t |z(s,x)-\hat z(s,x)|ds \\
& \qquad + K \int_{\tau\wedge t}^t\int_{\R^d} |z(s,y)-\hat z(s,y)|\mu_0(dy)ds. 
\end{align*}
Integrating with respect to $\mu_0$ and using Gronwall's lemma implies that 
$$
 \int_{\R^d} |z(t,y)-\hat z(t,y)|\mu_0(dy)ds \leq C(\tau\wedge t), 
$$
where $C$ depends on $\|b\|_\infty$, $K$ and $T$. Plugging this inequality into the estimate of $|z(t,x)-\hat z(t,x)|$ and using once more Gronwall's lemma gives the result. 
\end{proof}
\begin{proof}[Proof of Proposition \ref{prop:couplmu}] We  again assume that $\sigma^0=1$. Let $\mu_0,\hat \mu_0$ be two initial conditions in $\mathcal P_1(\R^d)$. We are going to compare the solution of \eqref{eq.barXsigma=0BIS} started at $\mu_0$ with the one started at $\hat \mu_0$ by  using a coupling by reflection. We leverage on the construction of a solution to \eqref{eq.barXsigma=0BIS} in the proof of Theorem \ref{thm.existenceNIN}. Let $(\Omega, \mathcal F, \F, {\mathbb Q})$ be a probability space supporting a standard Brownian motion $B$ and $k$ independent $\F_0-$measurable random variables $(X^i_0,\hat X^i_0)_{i=1,\dots, k}$, independent of $B$, where $X^i_0$ is of law $\mu_0$, $\hat X^i_0$ is of law $\hat \mu_0$, and such that
$$
\E_{\mathbb Q}\left[ |X^i_0-\hat X^i_0|\right] = W_1(\mu_0, \hat \mu_0) \qquad \forall i=1, \dots, k.
$$
We set $\bar \mu_0=\E_{\mathbb Q}\left[X_0^i\right]$, $\bar{\hat \mu}_0=\E_{\mathbb Q}\left[\hat X_0^i\right]$. When $\bar{\hat \mu}_0\neq\bar \mu_0$, we let $\hat B$ be the standard Brownian motion  such that $\bar{\hat \mu}_0+ \hat B$ is the reflection of $\bar \mu_0+B$ with respect to the hyperplane passing through $(\bar \mu_0+\bar{\hat \mu}_0)/2$ and orthogonal to $\bar{\hat \mu}_0-\bar \mu_0$ up to the merging stopping time$$
\tau := \inf\{ t\in[0,T], \; \bar{\hat \mu}_0+ \hat B_t=\bar \mu_0+B_t\}\mbox{ with convention }\inf\emptyset=T.
$$We also set $\hat B_t=\bar \mu_0-\bar{\hat \mu}_0+B_t$ for $t\in [\tau,T]$. If $\bar{\hat \mu}_0=\bar \mu_0$, we simply set $\hat B=B$. 
We have, for any $p>\frac 12$, 
\be\label{estitau}
{\mathbb Q}\left[\tau\geq t\right] \leq \frac{2|\bar{\hat \mu}_0-\bar \mu_0|}{(2\pi t)^{1/2}}\wedge 1\;\; \text{if $t\in (0,T]$}, \qquad  \E_{\mathbb Q}\left[ \tau^p\right] \leq C_{p,T} |\bar{\hat \mu}_0-\bar \mu_0|,
\ee
where $C_{p,T}$ depends only on $T$ and $p$.   Indeed, $\tau$ has the law of a one-dimensional Brownian first passage time at height $h:=|\bar{\hat \mu}_0-\bar \mu_0|$ capped by $T$. Thus, recalling that the density of the first passage time is $h/ (2\pi t^3)^{1/2} \exp\{-h^2/(2t)\}{\bf 1}_{t>0}$, the first inequality in \eqref{estitau} is immediate and we have 
\begin{align*}
\E_{\mathbb Q}\left[ \tau^p\right]&  = \int_0^T \frac{h}{(2\pi s^3)^{1/2}} s^p e^{-\frac{h^2}{2s}}ds + T^p \int_T^\infty \frac{h}{(2\pi s^3)^{1/2}} e^{-\frac{h^2}{2s}}ds \\
                                  &\le \frac{h}{\sqrt{2\pi}}\left(\int_0^T
s^{p-\frac 32}ds+ T^p\int_T^{+\infty}s^{-\frac 32}ds\right) \le Ch, 
\end{align*}
where $C$ depends on $p$ and $T$ only. Thus the second inequality in \eqref{estitau} also holds. 

We now recall that, according to \eqref{defbarb},
$$
\bar b^{\mu_0}(t,z,{\bx}) = \int_{\R^d} b(\mathcal Y^{\mu_0}(t, y, {\bx})+z,z) \mu_0(dy), \quad 
\bar b^{\hat{\mu_0}}(t,z,{\bx}) = \int_{\R^d} b(\mathcal Y^{\hat \mu_0}(t, y, {\bx})+z,z) \hat \mu_0(dy). 
$$
As  $\bar b^{\mu_0}$ and $\bar b^{\hat \mu_0}$ are bounded and nonanticipating, we can define the probability  measures ${\mathbb P}^{\mu_0}$ and ${\mathbb P}^{\hat \mu_0}$ equivalent  to ${\mathbb Q}$ by 
$$
\frac{d{\mathbb P}^{\mu_0}}{d{\mathbb Q}} =Z_T:= \exp\left\{ \int_0^T \bar b^{\mu_0}(s, \bar \mu_0+B_s, \bar \mu_0 +B)\cdot dB_s - \frac12 \int_0^T |\bar b^{\mu_0}(s, \bar \mu_0+B_s,\bar \mu_0+ B)|^2ds \right\}
$$
and
$$
\frac{d{\mathbb P}^{\hat \mu_0}}{d{\mathbb Q}}=\hat Z_T: = \exp\left\{ \int_0^T \bar b^{\hat \mu_0}(s, \bar{\hat \mu}_0+\hat B_s, \bar{\hat \mu}_0 +\hat B)\cdot d\hat B_s - \frac12 \int_0^T |\bar b^{\hat \mu_0}(s, \bar{\hat \mu}_0+\hat B_s,\bar{\hat \mu}_0+ \hat B)|^2ds \right\}. 
$$
By Girsanov Theorem,  under ${\mathbb P}^{\mu_0}$ and ${\mathbb P}^{\hat \mu_0}$ respectively,  the processes \small
$$
\left(W^{0}_t = B_t -\int_0^t \bar b^{\mu_0}(s, \bar \mu_0+B_s, \bar \mu_0+B)ds\right)_{t\in [0,T]}\; {\rm and}\; \left(\hat W^{0}_t = \hat B_t -\int_0^t \bar b^{\hat \mu_0}(s, \bar{\hat \mu}_0+\hat B_s, \bar{\hat \mu}_0+\hat B)ds\right)_{t\in [0,T]}
$$ \normalsize
are Brownian motions, independent of the initial conditions $(X_0^i)_{i=1, \dots, k}$ and $(\hat X_0^i)_{i=1, \dots, k}$, which are still distributed according to $\mu_0$ and $\hat \mu_0$ respectively and such that 
\be\label{lkzeljnfdgfk}
\E_{{\mathbb P}^{\mu_0}}\left[ |X_0^i-\hat X_0^i|\right] = \E_{{\mathbb P}^{\hat \mu_0}}\left[ |X_0^i-\hat X_0^i|\right] =W_1(\mu_0, \hat \mu_0)\qquad \forall i=1, \dots, k.
\ee
We have seen in the proof of Theorem \ref{thm.existenceNIN} that, if we set, for $i=1, \dots, k$,   
$$
X^i_t:= \mathcal Y^{\mu_0}(t,  X^i_0, \bar \mu_0+B)+\bar \mu_0+B_t, \qquad X^{0}_t= \bar \mu_0+ B_t
$$
and 
$$ 
\hat X^i_t:= \mathcal Y^{\hat \mu_0}(t, \hat X^i_0, \bar{\hat \mu}_0+\hat B)+ \bar{\hat \mu}_0+\hat B_t, \qquad \hat X^{0}_t=  \bar{\hat \mu}_0+ \hat B_t,
$$
then  $(\Omega,  \mathcal F, \F, {\mathbb P}^{\mu_0}, W^0, X^i, X^0)$ and $(\Omega,  \mathcal F, \F, {\mathbb P}^{\hat\mu_0}, \hat W^0, \hat X^i, \hat X^0)$  are weak solutions   to \eqref{eq.barXsigma=0BIS} with initial distributions $\mu_0$ and $\hat \mu_0$ respectively. As the $(X^i)_{i=1, \dots, k}$ are independent given $X^0$, the $(k+1)-$tuple $(X^0, \dots, X^k)$ has law $Q^{\mu_0}(d\bx^0)\prod_{i=1}^kP^{\mu_0}(d\bx^i|\bx^0)$ under ${\mathbb P}^{\mu_0}$. In the same way, $(\hat X^0, \dots, \hat X^k)$ has law $Q^{\hat \mu_0}(d\bx^0)\prod_{i=1}^kP^{\hat \mu_0}(d\bx^i|\bx^0)$ under ${\mathbb P}^{\hat\mu_0}$. 

Let $f:{\mathcal C}^{k+2}\to \R$ be a $1-$Lipschitz continuous test function bounded by $M/2$. Our aim is to estimate 
\begin{align*}
I&:= \left| \E_{{\mathbb P}^{\mu_0}}\left[ f(W^0,X^1, \dots, X^k,X^0)\right]- \E_{{\mathbb P}^{\hat \mu_0}}\left[ f(\hat W^0,\hat X^1, \dots, \hat X^k,\hat X^0)\right] \right| \\
& \leq \E_{{\mathbb P}^{\mu_0}}\left[ \left| f(W^0,X^1, \dots, X^k,X^0)- f(\hat W^0,\hat X^1, \dots, \hat X^k,\hat X^0)\right| \right]\\
& \qquad \qquad + \left| \E_{{\mathbb P}^{\mu_0}}\left[  f(\hat W^0,\hat X^1, \dots, \hat X^k,\hat X^0) \right]- \E_{{\mathbb P}^{\hat \mu_0}}\left[ f(\hat W^0,\hat X^1, \dots, \hat X^k,\hat X^0)\right] \right|\\
& =: I_1+I_2. 
\end{align*}
Using the Lipschitz regularity of $f$, we have: 
\begin{align*}
I_1 \leq \E_{{\mathbb P}^{\mu_0}}\left[ \|W^0-\hat W^0\|_\infty+\sum_{i=1}^k \|X^i - \hat X^i\|_\infty + \|X^0-\hat X^0\|_\infty \right].
\end{align*}
We now estimate each term separately. 
First, note that, by Lemma \ref{lem.mathcalY} and Lemma \ref{lem.estiYY}, we have, for $t\in [0, T]$,  for $i=1, \dots, k$, 
\begin{align*}
|X^i_t-\hat X^i_t| & \leq |\bar \mu_0+B_t-(\bar {\hat \mu}_0+\hat B_t)|{\bf 1}_{t\leq \tau}+ |\mathcal Y^{\mu_0}(t,  X^i_0, \bar \mu_0+B)-\mathcal Y^{\hat \mu_0}(t,  \hat X^i_0, \bar \mu_0+B)| \\
& \qquad + 
|\mathcal Y^{\hat \mu_0}(t,  \hat X^i_0, \bar \mu_0+B)-\mathcal Y^{\hat \mu_0}(t,  \hat X^i_0, \bar{\hat \mu}_0+\hat B)|\\
& \leq  |\bar \mu_0+B_t-(\bar {\hat \mu}_0+\hat B_t)|{\bf 1}_{t\leq \tau}+ C(W_1(\mu_0, \hat \mu_0)+ |X^i_0-\hat X^i_0|+ t\wedge \tau)
\end{align*}
while 
\begin{align*}
|X^0_t-\hat X^0_t|= |\bar \mu_0+B_t-(\bar {\hat \mu}_0+\hat B_t)|{\bf 1}_{t\leq \tau}. 
\end{align*}
On the other hand, 
\begin{align*}
& |W^{0}_t - \hat W^0_t|  \leq |B_t-\hat B_t|+\int_0^t | \bar b^{\mu_0}(s, \bar \mu_0+B_s, \bar \mu_0+B)- \bar b^{\hat \mu_0}(s, \bar{\hat \mu}_0+\hat B_s, \bar{\hat \mu}_0+\hat B)|ds \\ 
& \leq W_1(\mu_0, \hat \mu_0) +   |\bar \mu_0+B_t-(\bar {\hat \mu}_0+\hat B_t)|{\bf 1}_{t\leq \tau}+ \int_0^t | \bar b^{\mu_0}(s, \bar \mu_0+B_s, \bar \mu_0+B)- \bar b^{\hat \mu_0}(s, \bar{\hat \mu}_0+\hat B_s, \bar{\hat \mu}_0+\hat B)|ds .
\end{align*}
Recalling the definition of $\bar b^{\mu_0}$ and $\bar b^{\hat \mu_0}$, we have, for any $s\in[0,T]$, 
\begin{align*}
& | \bar b^{\hat \mu_0}(s, \bar{\hat \mu}_0+\hat B_s, \bar{\hat \mu}_0 +\hat B) -\bar b^{\mu_0}(s, \bar \mu_0+B_s,\bar \mu_0+ B)|\\
&\qquad \leq  \Bigl| \int_{\R^d} b(\mathcal Y^{\hat\mu_0}(s, y, \bar{\hat \mu}_0 +\hat B)+\bar{\hat \mu}_0+\hat B_s,\bar{\hat \mu}_0+\hat B_s) \hat \mu_0(dy)\\
& \qquad \qquad -\int_{\R^d} b(\mathcal Y^{ \mu_0}(s, y, \bar \mu_0+ B)+\bar \mu_0+B_s,\bar \mu_0+B_s)  \mu_0(dy)\Bigr|\\
& \qquad \leq CW_1(\mu_0,\hat \mu_0)+ \int_{\R^d} |b(\mathcal Y^{\mu_0}(s, y, \bar{\hat \mu}_0 +\hat B)+\bar{\hat \mu}_0+\hat B_s,\bar{\hat \mu}_0+\hat B_s)\\
& \hspace{4cm} -b(\mathcal Y^{ \mu_0}(s, y, \bar \mu_0+ B)+\bar \mu_0+B_s,\bar \mu_0+B_s) | \mu_0(dy),
\end{align*}
where,  for the last inequality, we use the fact that $(\mu_0,y)\mapsto\mathcal Y^{\mu_0}(s,y,\bx)$ is Lipschitz uniformly in $(s,\bx)\in[0,T]\times {\mathcal C}$ according to Lemma \ref{lem.mathcalY} while $b$ is Lipschitz in the first variable. Recalling Lemma \ref{lem.estiYY}, we have  therefore (using again the fact that $b$ is bounded and Lipschitz in the first variable)
\be\label{kaejhlreIIH}
 | \bar b^{\hat \mu_0}(s, \bar{\hat \mu}_0+\hat B_s, \bar{\hat \mu}_0 +\hat B) -\bar b^{\mu_0}(s, \bar \mu_0+B_s,\bar \mu_0+ B)|
\leq C(W_1(\mu_0,\hat \mu_0)+{\bf 1}_{s\leq \tau} + \tau {\bf 1}_{s>\tau}). 
\ee
Thus
\begin{align*}
|W^{0}_t - \hat W^0_t| & \leq C(W_1(\mu_0,\hat \mu_0)+\tau+   |\bar \mu_0+B_t-(\bar {\hat \mu}_0+\hat B_t)|{\bf 1}_{t\leq \tau}). 
\end{align*}
Collecting the above inequalities and using  \eqref{lkzeljnfdgfk},  we obtain for any ${\ep'}>0$: 
\begin{align*}
I_1& \leq Ck \left(W_1(\mu_0, \hat \mu_0) + \E_{{\mathbb P}^{\mu_0}}\left[ \sup_{t\leq \tau} |\bar \mu_0+B_t-(\bar {\hat \mu}_0+\hat B_t)|+  \tau\right]\right) \\ 
& \leq Ck\left(W_1(\mu_0, \hat \mu_0) + \E_{{\mathbb Q}}\left[Z_T\sup_{t\leq \tau} |\bar \mu_0+B_t-(\bar {\hat \mu}_0+\hat B_t)|\right]+\E_{{\mathbb Q}}\Big[Z_T\tau\Big]\right) \\
& \leq Ck\left(W_1(\mu_0, \hat \mu_0) + \E_{{\mathbb Q}}\Bigl[|Z_T|^{(1+{\ep'})/{\ep'}}\right]^{{\ep'}/(1+{\ep'})}\Bigl( \E_{\mathbb Q}\left[\sup_{t\leq \tau}|\bar \mu_0+B_t-(\bar {\hat \mu}_0+\hat B_t)|^{1+{\ep'}}\right]^{1/(1+{\ep'})} \\
&\qquad \qquad\qquad\qquad\qquad\qquad \qquad\qquad\qquad\qquad\qquad +\E_{\mathbb Q}\left[\tau^{1+{\ep'}}\right]^{1/(1+{\ep'})}\Bigr)\Bigr),
\end{align*}
where, by Doob's inequality, 
\begin{align*}
 \E_{\mathbb Q}\left[\sup_{t\leq \tau}|\bar \mu_0+B_t-(\bar {\hat \mu}_0+\hat B_t)|^{1+{\ep'}}\right]  & \leq C_{\ep'} \E_{\mathbb Q}\left[|\bar \mu_0+B_\tau-(\bar {\hat \mu}_0+\hat B_\tau)|^{1+{\ep'}}\right] \\ 
& =  C_{\ep'} \E_{\mathbb Q}\left[|\bar \mu_0+B_T-(\bar {\hat \mu}_0+\hat B_T)|^{1+{\ep'}}{\bf 1}_{\tau =T}\right] . 
\end{align*}
Now note that, by H\"{o}lder's inequality and then \eqref{estitau}, 
\begin{align*}
& \E_{\mathbb Q}\left[|\bar \mu_0+B_T-(\bar {\hat \mu}_0+\hat B_T)|^{1+{\ep'}}{\bf 1}_{\tau = T}\right]  \leq 
C_{\ep'} \E_{\mathbb Q}\left[|\bar \mu_0-\bar {\hat \mu}_0|^{1+{\ep'}}{\bf 1}_{\tau = T}\right]+ C_{\ep'} \E_{\mathbb Q}\left[|B_T-\hat B_T|^{1+{\ep'}}{\bf 1}_{\tau = T}\right] \\ 
& \qquad \leq C_{\ep'} |\bar \mu_0-\bar {\hat \mu}_0|^{1+{\ep'}} +C_{\ep'} \E_{\mathbb Q}\left[|B_T-\hat B_T|^{(1+{\ep'})^2/{\ep'}} \right]^{{\ep'}/(1+{\ep'})}  {\mathbb Q}\left[\tau \geq T\right]^{1/(1+{\ep'})} \\
& \qquad \leq C_{\ep'} \Bigl( |\bar \mu_0-\bar {\hat \mu}_0|^{1+{\ep'}} + |\bar \mu_0-\bar {\hat \mu}_0|^{1/(1+{\ep'})}\Bigr). 
\end{align*}
With these inequalities in hand, we come back to the estimate of $I_1$ and, using the boundedness of $b$ to bound $\E_{{\mathbb Q}}\Bigl[|Z_T|^{(1+{\ep'})/{\ep'}}\Bigr]^{{\ep'}/(1+{\ep'})}$ from above, we conclude that 
$$
I_1\leq C_{\ep'} k( W_1(\mu_0, \hat \mu_0)+ W_1(\mu_0, \hat \mu_0)^{1/(1+{\ep'})^2}). 
$$
In order to estimate $I_2$,  we first use that $f$ is bounded by $M/2$ and then Pinsker's inequality to obtain
\begin{align*}
I_2 \leq M d_{\rm TV}\left({\mathbb P}^{\hat \mu_0},{\mathbb P}^{ \mu_0}\right)\leq M \sqrt{ \frac12 H({\mathbb P}^{ \hat \mu_0}|{\mathbb P}^{ \mu_0})} ,
\end{align*} 
where $ H({\mathbb P}^{ \hat \mu_0}|{\mathbb P}^{ \mu_0})$ is the relative entropy of ${\mathbb P}^{ \hat \mu_0}$ with respect to ${\mathbb P}^{ \mu_0}$. To handle this last term, we have to introduce a few notation. Let $(e^1, \dots, e^d)$ be an orthonormal basis of $\R^d$ with $\bar \mu_0-\bar{\hat \mu_0}=|\bar \mu_0-\bar{\hat \mu_0}|e_1$. Let $\hat W^{0}=(\hat W^{0,j},\dots,\hat W^{0,d})$, $B=(B^1, \dots, B^d)$ and $\hat B=(\hat B^1, \dots, \hat B^d)$ in this basis. We have $dB^i_t= d\hat B^i_t$ for $i\geq 2$ and for $i=1$ and $t\geq \tau$, while $dB^1_t=-d\hat B^1_t$ on $[0,\tau]$. Using the notation  $\bar b^{ \mu_0,j}(s)$ and $\bar b^{\hat \mu_0,j}(s)$ for the $j-$th component in the basis of $\bar b^{\mu_0}(s, \bar \mu_0+B_s, \bar \mu_0 +B)$ and of $\bar b^{\hat \mu_0}(s, \bar{\hat \mu}_0+\hat B_s, \bar{\hat \mu}_0 +\hat B)$ respectively,  we obtain \small
\begin{align*}
 & \frac{d {\mathbb P}^{ \hat \mu_0}}{d {\mathbb P}^{ \mu_0}}  = \frac{\hat Z_T}{Z_T}  
  = \exp\Bigl\{ \int_0^T (\sum_{j=2}^d (\bar b^{\hat \mu_0,j}(s) -\bar b^{ \mu_0,j}(s)) dB^j_s -\frac12 \int_0^T\sum_{j=2}^d (|\bar b^{\hat \mu_0,j}(s)|^2 -|\bar b^{ \mu_0,j}(s)) |^2ds \\
& \qquad - \int_0^\tau    (\bar b^{\hat \mu_0,1}(s) +\bar b^{ \mu_0,1}(s)) dB^1_s+ \int_\tau^T    (\bar b^{\hat \mu_0,1}(s) -\bar b^{ \mu_0,1}(s)) dB^1_s -\frac12 \int_0^T(|\bar b^{\hat \mu_0,1}(s)|^2 -|\bar b^{ \mu_0,1}(s)) |^2ds \Bigr\}\\ 
& =   \exp\Bigl\{ \int_0^T (\sum_{j=2}^d (\bar b^{\hat \mu_0,j}(s) -\bar b^{ \mu_0,j}(s)) d\hat W^{0,j}_s +\frac12 \int_0^T\sum_{j=2}^d (|\bar b^{\hat \mu_0,j}(s)-\bar b^{ \mu_0,j}(s)) |^2ds- \int_0^\tau    (\bar b^{\hat \mu_0,j}(s) +\bar b^{ \mu_0,j}(s)) d\hat W^{0,1}_s \\
& \qquad + \frac12 \int_0^\tau |\bar b^{\hat \mu_0,1}(s) +\bar b^{ \mu_0,1}(s)|^2ds 
+ \int_\tau ^T   (\bar b^{\hat \mu_0,1}(s) -\bar b^{ \mu_0,1}(s)) d\hat W^{0,1}(s) +\frac12 \int_{{\tau}}^T|\bar b^{\hat \mu_0,1}(s)-\bar b^{ \mu_0,1}(s) |^2ds \Bigr\}.
\end{align*} \normalsize
Thus, as $\hat W^0$ is a Brownian motion under ${\mathbb P}^{\hat \mu_0}$,  one  can write that
\begin{align*}
&H({\mathbb P}^{ \hat \mu_0}|{\mathbb P}^{ \mu_0}) =\E_{{\mathbb P}^{\hat \mu_0}} \left[\ln\left(\frac{d{\mathbb P}^{ \hat \mu_0}}{d {\mathbb P}^{ \mu_0}}\right)\right]\\&=  \frac12 \E_{{\mathbb P}^{\hat \mu_0}} \Bigl[ \int_0^T \sum_{j=2}^d |\bar b^{\hat \mu_0,j}(s)-\bar b^{ \mu_0,j}(s) |^2ds+ \int_0^\tau |\bar b^{\hat \mu_0,1}(s) +\bar b^{ \mu_0,1}(s)|^2 ds + \int_\tau^T   | \bar b^{\hat \mu_0,1}(s)-\bar b^{ \mu_0,1}(s) |^2ds\Bigr] .
\end{align*}
Coming back to \eqref{kaejhlreIIH} and using the fact that $b$ is bounded, we conclude as above, using \eqref{estitau}, 
 that 
\begin{align*}
& H({\mathbb P}^{ \hat \mu_0}|{\mathbb P}^{ \mu_0}) \leq C(W_1(\mu_0,\hat \mu_0) + \E_{{\mathbb P}^{\hat \mu_0}} \left[\tau\right]) \leq C_{\ep''}( W_1(\mu_0,\hat \mu_0)+ W_1(\mu_0,\hat \mu_0)^{1/(1+\ep'')^2}),
\end{align*}
for any $\ep''>0$.
Hence $I_2\le MC_{\ep''}( W_1(\mu_0,\hat \mu_0)^{1/2}+ W_1(\mu_0,\hat \mu_0)^{1/(2(1+\ep'')^2)})$ and, choosing $\ep'=(1-\ep)^{-1/2}-1$ and $\ep''=(1-2\ep)^{-1/2}-1$, we conclude that  
$$
I \leq  C_\ep (W_1(\mu_0,\hat \mu_0)^{1/2}+ W_1(\mu_0,\hat \mu_0)^{1/2-\ep}) (k W_1(\mu_0,\hat \mu_0)^{1/2} +M). 
$$
Taking the supremum over $f$ gives \eqref{eq:depmuk0} when $k=1$ and \eqref{eq:depmuk} for $f$ not depending on its first variable. 
\end{proof}

\subsection{Propagation of chaos}  \label{subsec2.3}

Our aim is to investigate two particle systems. The first one, that we call the natural one, is the first which comes in mind when looking at the McKean-Vlasov equation \eqref{eq.barXsigma=0BIS}. However, we obtain only a relatively slow convergence rate for this system. To improve the convergence rate, we introduce another particle system in which the initial condition of the empirical mean of the particles is slightly twisted: we then obtain an optimal convergence rate. To simplify the notation, we suppose  from now on that $\sigma^0=1$. 

\subsubsection{The natural particle system}

We first investigate the limit, as $N\to \infty$, of the particle system 
\begin{equation}\label{eq:part1}
\left\{\begin{array}{l}
\ds    X^{i,N}_t=X^{i}_0+\int_0^tb(X^{i,N}_s,\bar X^N_s)ds+W^0_t,\;t\in[0,T],\;i\in\{1,\cdots,N\}, \\
\ds \mbox{ with }\bar X^N_s=\frac 1N\sum_{j=1}^N X^{j,N}_s,
\end{array}\right.
\end{equation}
where the initial random variables $(X^i_0)_{i\in\{1,\cdots,N\}}$ are i.i.d. according to $\mu_0$ and the process $(X^{1,N},\cdots,X^{N,N})$ is adapted to a filtration with respect to which $W^0$ is a Brownian motion. Let us recall the notation $m^N_{{\vec{x}}_0}=\frac 1N\sum_{i=1}^N\delta_{x^i_0}$ for $\vec{x}_0=(x^1_0,\cdots,x^N_0)\in\{\R^d\}^N$ and first discuss the existence of a solution to \eqref{eq:part1}. 

\begin{prop}\label{prop:exunsyspartnat}Under Assumption \eqref{Hyp.noCN}, there exists a unique weak solution to the particle system \eqref{eq:part1}. More precisely, the distribution of $(\bar X^N,X^{1,N},\cdots,X^{N,N})$ is the image of $\mu_0^{\otimes N}(d{\vec{x}}_0)Q^{m^N_{{\vec{x}}_0}}(d{\bx}^0)$ by the map
  $$\{\R^d\}^N\times {\mathcal C}\ni({\vec{x}}_0,{\bx}^0)\mapsto \left({\bx}^0,{\bx}^0+\mathcal Y^{m^N_{{\vec{x}}_0}}(\cdot,x^1_0,{\bx}^0),\cdots,{\bx}^0+\mathcal Y^{m^N_{{\vec{x}}_0}}(\cdot,x^N_0,{\bx}^0)\right)\in {\mathcal C}^{1+N},$$ 
  where we recall that $Q^{\mu_0}$ denotes the distribution of $X^0$ when $(W^0,X,X^0)$ is a weak solution to \eqref{eq.barXsigma=0BIS} with initial distribution $\mu_0$. 
\end{prop}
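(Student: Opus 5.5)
The plan is to reduce the particle system \eqref{eq:part1} to the structure already exploited in the proof of Theorem \ref{thm.existenceNIN}. The role of $\mu_0$ there is now played by the random empirical measure $m^N_{\vec x_0}$, with $\vec x_0=(X^1_0,\dots,X^N_0)$ frozen at time $0$.

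\textbf{Uniqueness.} Let $(X^{1,N},\dots,X^{N,N})$ be a weak solution and set $y^i_t=X^{i,N}_t-\bar X^N_t$. Averaging the $N$ equations gives
$$\bar X^N_t=\frac1N\sum_{j=1}^N X^j_0+\int_0^t\frac1N\sum_{j=1}^N b(y^j_s+\bar X^N_s,\bar X^N_s)ds+W^0_t .$$
Hence $(y^i)$ solves \eqref{eq:discdyn} pathwise with $\bx=\bar X^N$ and $x^i_0=X^i_0$. The last statement of Lemma \ref{lem.mathcalY} then yields $y^i_t=\mathcal Y^{m^N_{\vec x_0}}(t,X^i_0,\bar X^N)$. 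Substituting this back, and using the nonanticipating property together with $\bar b^{m}$ from \eqref{defbarb} with $m=m^N_{\vec x_0}$, we obtain
$$\bar X^N_t=\overline{m^N_{\vec x_0}}+\int_0^t\bar b^{m^N_{\vec x_0}}(s,\bar X^N_s,\bar X^N)ds+W^0_t,$$
where $\overline{m^N_{\vec x_0}}=\frac1N\sum_j X^j_0$. This is exactly \eqref{eq:dynx^0} with $\mu_0$ replaced by $m^N_{\vec x_0}$.

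Conditionally on $\vec x_0$, we remove the drift by Girsanov. The drift is bounded and is a measurable function of $(\vec x_0,s,\bar X^N)$. The initial condition is $\mathcal F_0$-measurable and therefore independent of $W^0$. Define
$$\frac{d\tilde{\mathbb P}}{d\mathbb P}=\exp\Big\{-\int_0^T\bar b^{m^N_{\vec x_0}}\cdot dW^0_s-\frac12\int_0^T|\bar b^{m^N_{\vec x_0}}|^2ds\Big\}.$$
Under $\tilde{\mathbb P}$, the process $\bar X^N-\overline{m^N_{\vec x_0}}$ is a Brownian motion independent of $\vec x_0$, and $\vec x_0$ keeps the law $\mu_0^{\otimes N}$. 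Reversing the density as at the end of the proof of Theorem \ref{thm.existenceNIN} gives the law of $(\vec x_0,\bar X^N)$. It equals $\mu_0^{\otimes N}(d\vec x_0)$ times the Girsanov-weighted Wiener measure started at $\overline{m^N_{\vec x_0}}$ with drift $\bar b^{m^N_{\vec x_0}}$. By Remark \ref{rem:PQ}, that weighted measure is precisely $Q^{m^N_{\vec x_0}}$. Since $X^{i,N}=\bar X^N+\mathcal Y^{m^N_{\vec x_0}}(\cdot,X^i_0,\bar X^N)$, the joint law is the announced image measure. In particular it is uniquely determined.

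\textbf{Existence.} We run the construction in reverse. On a space with $B$ and i.i.d. $X^i_0\sim\mu_0$ independent of $B$, set $\bar X^N=\overline{m^N_{\vec x_0}}+B$. Change the measure with the density
$$\exp\Big\{\int_0^T\bar b^{m^N_{\vec x_0}}(s,\bar X^N_s,\bar X^N)\cdot dB_s-\frac12\int_0^T|\bar b^{m^N_{\vec x_0}}(s,\bar X^N_s,\bar X^N)|^2ds\Big\},$$
and define $W^0=B-\int_0^\cdot\bar b^{m^N_{\vec x_0}}(s,\bar X^N_s,\bar X^N)ds$. Then set $X^{i,N}=\bar X^N+\mathcal Y^{m^N_{\vec x_0}}(\cdot,X^i_0,\bar X^N)$.

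Averaging the equation for $\mathcal Y^{m}$ against $m=m^N_{\vec x_0}$ itself shows that $\frac1N\sum_i\mathcal Y^{m}(t,X^i_0,\cdot)=0$. This is the empirical analogue of the conditional-expectation computation in the proof of Theorem \ref{thm.existenceNIN}, and here no independence is needed. Hence $\bar X^N$ is indeed the empirical mean of the particles. The state equation then follows by the same algebra as before. Because the Radon--Nikodym density is $\vec x_0$-dependent but $\mathcal F_0\vee\sigma(B)$-measurable, $W^0$ is a Brownian motion independent of $\vec x_0$. This holds since, conditionally on $\vec x_0$, Girsanov applies; so the $X^i_0$ remain i.i.d. $\mu_0$.

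\textbf{Main obstacle.} The main technical point is measurability. The maps $(\vec x_0,t,\bx)\mapsto\mathcal Y^{m^N_{\vec x_0}}(t,x^i_0,\bx)$ and $\bar b^{m^N_{\vec x_0}}$ must be jointly Borel in $(\vec x_0,\bx)$, so that the Girsanov density is a legitimate random variable and the conditional-on-$\vec x_0$ argument is rigorous. This should follow from the Lipschitz continuity of $(x,\mu_0)\mapsto\mathcal Y^{\mu_0}(t,x,\bx)$ in Lemma \ref{lem.mathcalY}, combined with Borel measurability in $\bx$ (a Carathéodory-type argument) and continuity of $\vec x_0\mapsto m^N_{\vec x_0}$ into $\mathcal P_1$. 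Finally, one must make precise the identification of $Q^{m}$ with the Girsanov-weighted Wiener measure for deterministic $m$, which is exactly what Remark \ref{rem:PQ} provides.
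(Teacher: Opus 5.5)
Your proposal is correct and follows essentially the same route as the paper: for uniqueness, the last assertion of Lemma \ref{lem.mathcalY} reduces $\bar X^N$ to the equation \eqref{eq:dynx^0} with $\mu_0$ replaced by $m^N_{\vec x_0}$, and Girsanov together with Remark \ref{rem:PQ} then identifies the law; existence reverses this construction. The differences are cosmetic: the paper samples $(\vec X_0,\bar X^N)$ directly from $\mu_0^{\otimes N}(d\vec x_0)Q^{m^N_{\vec x_0}}(d\bx^0)$ and uses that the drift-removal map sends every $Q^{\mu}$ to the Wiener measure, instead of performing the change of measure as you do, and you make explicit two points the paper leaves implicit, namely the identity $\frac1N\sum_i\mathcal Y^{m^N_{\vec x_0}}(t,x^i_0,\bx)=0$ and the joint measurability in $(\vec x_0,\bx)$.
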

\begin{remark} 
 Note that exchangeability holds in the sense that the distribution of \\$(\bar X^N,X^{\sigma(1),N},\cdots,X^{\sigma(N),N})$ does not depend on the permutation $\sigma$ of $\{1,\cdots,N\}$.
\end{remark}

\begin{proof} 
  We choose $(\vec{X}_0,\bar X^N,W^0)$ distributed according to the image of the probability measure $\mu_0^{\otimes N}(d{\vec{x}}_0)Q^{m^N_{{\vec{x}}_0}}(d{\bx}^0)$ by
$({\vec{x}}_0,{\bx}^0)\mapsto ({\vec{x}}_0,{\bx}^0,{\bx}^0-\bar{m^N_{{\vec{x}}_0}}-\int_0^\cdot\bar b^{m^N_{{\vec{x}}_0}}(s,{\bx}^0_s,{\bx}^0)ds)$. Since for each $\mu\in \mathcal P_1(\R^d)$, the image of $Q^{\mu}(d{\bx}^0)$ by ${\bx}^0\mapsto {\bx}^0-\bar{\mu}-\int_0^\cdot\bar b^{\mu}(s,{\bx}^0_s,{\bx}^0)ds)$ is the Wiener measure and does not depend on $\mu$, $W^0$ is a Brownian motion independent from $\vec{X}_0=(X^1_0,\cdots,X^N_0)$ where the components $X^i_0$ are i.i.d. according to $\mu_0$. Moreover, for each $t\in[0,T]$, we keep the independence between $(W^0_s,\bar X^N_s)_{s\in[0,t]}$ and $(W^0_s-W^0_t)_{s\in[t,T]}$. Using \eqref{defbarb}, we also have 
\begin{align*}
   {\bar X}^N_t=\bar{m^N_{{\vec{X}}_0}}+\int_0^t\frac 1N\sum_{i=1}^Nb(\mathcal Y^{m^N_{{\vec{X}}_0}}(s,X^i_0, {{\bar X}^N})+{{\bar X}^N}(s), {{\bar X}^N}(s))ds+W^0_t,
\end{align*}
where, by definition, \small
$$\mathcal Y^{m^N_{{\vec{X}}_0}}(t,x, {\bx}) = x-\bar{m^N_{{\vec{X}}_0}}+\int_0^t  b(\mathcal Y^{m^N_{{\vec{X}}_0}}(s,x, {\bx})+{\bx}_s, {\bx}_s)ds - \int_0^t\frac 1N\sum_{i=1}^Nb(\mathcal Y^{m^N_{{\vec{X}}_0}}(s,X^i_0, {\bx})+{\bx}_s, {\bx}_s)ds. 
$$ \normalsize
We now set $X^{i,N}_t={\bar X}^N_t+\mathcal Y^{m^N_{{\vec{X}}_0}}(t,X^i_0, {\bar X}^N)$. Then $(\bar X^N,X^{1,N},\cdots,X^{N,N})$ has the distribution given in the statement. Moreover,
\begin{align*}
   X^{i,N}_t=X^i_0+\int_0^tb(X^{i,N}_s,{\bar X}^N_s)ds+W^0_t\mbox{ and }{\bar X}^N_t=\frac 1N\sum_{j=1}^NX^{i,N}_t,
\end{align*}
so that $(X^{1,N},\cdots,X^{N,N})$ is a weak solution to \eqref{eq:part1}.

Conversely let us consider a weak solution to the particle system dynamics \eqref{eq:part1} under the probability measure $\P$. Then, setting ${\vec{X}}_0=(X^1_0,\cdots,X^N_0)$, we have
\begin{align*}
   X^{i,N}_t-\bar X^N_t=X^i_0-\bar{m^N_{{\vec{X}}_0}}+\int_0^tb(X^{i,N}_s-\bar X^N_s+\bar X^N_s,\bar X^N_s)-\frac{1}{N}\sum_{j=1}^Nb(X^{j,N}_s-\bar X^N_s+\bar X^N_s,\bar X^N_s)ds
\end{align*}
By the last assertion in Lemma \ref{lem.mathcalY}, this implies that $\left(X^{i,N}_\cdot-\bar X^N_\cdot\right)_{i=1,\cdots,N}=\left(\mathcal Y^{m^N_{{\vec{X}}_0}}(\cdot,X^i_0,\bar X^N)\right)_{i=1,\cdots,N}$. Hence
$$\bar X^N_t=\bar{m^N_{{\vec{X}}_0}}+\int_0^t\frac 1N \sum_{i=1}^N b(\mathcal Y^{m^N_{{\vec{X}}_0}}(s,X^i_0,\bar X^N)+\bar{X}^N_s,\bar X^N_s)ds+W^0_t.$$
By Girsanov's theorem, under $\Q$ such that \begin{align*}
   \frac{d\Q}{d\P}&=e^{-\int_0^T\frac 1N b(\mathcal Y^{m^N_{{\vec{X}}_0}}(s,X^i_0,\bar X^N)+\bar{X}^N_s,\bar X^N_s).dW^0_s-\frac 12\int_0^T|\frac 1N b(\mathcal Y^{m^N_{{\vec{X}}_0}}(s,X^i_0,\bar X^N)+\bar{X}^N_s,\bar X^N_s)|^2ds}\\&=e^{-\int_0^T\frac 1N b(\mathcal Y^{m^N_{{\vec{X}}_0}}(s,X^i_0,\bar X^N)+\bar{X}^N_s,\bar X^N_s).d\bar X^N_s+\frac 12\int_0^T|\frac 1N b(\mathcal Y^{m^N_{{\vec{X}}_0}}(s,X^i_0,\bar X^N)+\bar{X}^N_s,\bar X^N_s)|^2ds},
                                            \end{align*}
                                            $\left(B_t=W^0_t+\int_0^t\frac 1N \sum_{i=1}^N b(\mathcal Y^{m^N_{{\vec{X}}_0}}(s,X^i_0,\bar X^N)+\bar{X}^N_s,\bar X^N_s)ds\right)_{t\in[0,T]}$ is a Brownian motion independent from the initial conditions $X^i_0$ which remain i.i.d. according to $\mu_0$. Therefore for any bounded and measurable map $\phi:{\mathcal C}^{1+N}\to\R$, $\E\left[ \phi(\bar X^N,X^{1,N}, \dots, X^{N,N})\right]$ is equal to
\begin{align*}
\E^\Q\bigg[& \phi\left(\bar{m^N_{{\vec{X}}_0}}
+B, \mathcal Y^{m^N_{{\vec{X}}_0}}(\cdot,X^1_0, \bar{m^N_{{\vec{X}}_0}}
+B)+\bar{m^N_{{\vec{X}}_0}}+B, \dots,  \mathcal Y^{m^N_{{\vec{X}}_0}}(\cdot,X^N_0, \bar{m^N_{{\vec{X}}_0}}
+B)+\bar{m^N_{{\vec{X}}_0}}
+B\right)\\&e^{\int_0^T\frac 1N b(\mathcal Y^{m^N_{{\vec{X}}_0}}(s,X^i_0,\bar{m^N_{{\vec{X}}_0}}
+B)+\bar{m^N_{{\vec{X}}_0}}
+B_s,\bar{m^N_{{\vec{X}}_0}}
+B_s).dB_s-\frac 12\int_0^T|\frac 1N b(\mathcal Y^{m^N_{{\vec{X}}_0}}(s,X^i_0,\bar{m^N_{{\vec{X}}_0}}
+B)+\bar{X}^N_s,\bar X^N_s)|^2ds}\bigg].
\end{align*}
This ensures weak uniqueness.
\end{proof}

We now present the propagation of chaos for the particle system \eqref{eq:part1}. 
\begin{theorem}\label{propchaosbissansbruitindiv}
Let ${\mathcal L}(\bar X^{N},X^{1,N},\cdots,X^{k,N})$ denote the distribution of the empirical mean and $k$ components in the particle system \eqref{eq:part1} and ${\mathcal L}(X^0,X^1,\cdots,X^k)$ the distribution of the $X^0$ component of the weak solution to \eqref{eq.barXsigma=0BIS} given by Theorem \ref{thm.existenceNIN} together with $k$ conditionally independent copies $(X^1,\cdots,X^k)$ distributed according to the conditional law of the $X$ component given $X^0$.  Under Assumption \eqref{Hyp.noCN}, for each $\ep\in(0,1/2)$, there exists a finite constant $C_\ep$ such that, for any $N\ge k\ge 1$, 
   \begin{align*}
  & d_{{\rm B}_M{\rm L}}({\mathcal L}(\bar X^{N},X^{1,N},\cdots,X^{k,N}),{\mathcal L}(X^0,X^1,\cdots,X^k)) \le M\left(1-\frac{N!}{N^k(N-k)!}\right)\\
   & \qquad +C_\ep \int_{\vec{x}_0} \left(W_1(m^N_{\vec{x}_0}, \mu_0)^{1/2}+ W_1(m^N_{\vec{x}_0}, \mu_0)^{1/2-\ep}\right)  \left(k W_1(m^N_{\vec{x}_0\in \{\R^d\}^N}, \mu_0)^{1/2}+ M\right)\mu_0^{\otimes N}(d\vec{x}_0). 
 \end{align*}
 \end{theorem}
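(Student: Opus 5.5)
Conditioning on the initial positions reduces the particle system to a limit system started from the empirical measure, and Proposition~\ref{prop:couplmu} then compares that system with the one started from $\mu_0$. The extra term $M(1-\frac{N!}{N^k(N-k)!})$ pays for the fact that $k$ particles are not exactly i.i.d. draws from $m^N_{\vec x_0}$.

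\textbf{Step 1: disintegrate the particle law over $\vec x_0$.} By Proposition~\ref{prop:exunsyspartnat}, conditionally on $\vec X_0=\vec x_0$ the law of $(\bar X^N,X^{1,N},\dots,X^{k,N})$ is the image of $Q^{m^N_{\vec x_0}}(d\bx^0)$ by
\[
\bx^0\mapsto\big(\bx^0,\ \bx^0+\mathcal Y^{m^N_{\vec x_0}}(\cdot,x^1_0,\bx^0),\dots,\bx^0+\mathcal Y^{m^N_{\vec x_0}}(\cdot,x^k_0,\bx^0)\big).
\]
By Remark~\ref{rem:PQ}, with $\nu=m^N_{\vec x_0}$, the conditional law $P^{\nu}(d\bx|\bx^0)$ is the image of $\nu(dx_0)$ by $x_0\mapsto \bx^0+\mathcal Y^{\nu}(\cdot,x_0,\bx^0)$. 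Hence the reference measure
\[
R_{\vec x_0}:=Q^{\nu}(d\bx^0)\prod_{i=1}^kP^{\nu}(d\bx^i|\bx^0)
\]
is obtained from the formula above by replacing the deterministic indices $x^1_0,\dots,x^k_0$ with $k$ indices drawn independently and uniformly, with replacement, from $\{x^1_0,\dots,x^N_0\}$.

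\textbf{Step 2: exchangeability and a sampling-with-replacement coupling.} Fix a test function $\phi$ bounded by $M/2$ and $1$-Lipschitz. Draw indices $J_1,\dots,J_k$ i.i.d.\ uniform on $\{1,\dots,N\}$, independent of everything else. On the event that the $J_i$ are pairwise distinct, which has probability $\frac{N!}{N^k(N-k)!}$, exchangeability (the remark after Proposition~\ref{prop:exunsyspartnat}) shows that $(\bar X^N,X^{J_1,N},\dots,X^{J_k,N})$ has the same law as $(\bar X^N,X^{1,N},\dots,X^{k,N})$. The law of $(\bar X^N,X^{J_1,N},\dots,X^{J_k,N})$ itself is $\int R_{\vec x_0}\,\mu_0^{\otimes N}(d\vec x_0)$. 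On the complementary event the difference of the two expectations of $\phi$ is at most $M$ (since $|\phi|\le M/2$), so
\[
\Big|\E[\phi(\bar X^N,X^{1,N},\dots,X^{k,N})]-\int\!\!\int\phi\, dR_{\vec x_0}\,\mu_0^{\otimes N}(d\vec x_0)\Big|\le M\Big(1-\frac{N!}{N^k(N-k)!}\Big).
\]
To obtain exactly this bound, write the first expectation as a mixture of the conditional expectations given distinct and non-distinct indices. This needs some care: one checks that the conditional law given distinct indices equals the original law, which holds because the index choice is independent of the system and exchangeability is used.

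\textbf{Step 3: apply the continuity estimate for each $\vec x_0$.} For fixed $\vec x_0$, inequality \eqref{eq:depmuk} of Proposition~\ref{prop:couplmu}, used with $\hat\mu_0=m^N_{\vec x_0}$ (an element of $\mathcal P_1$), bounds
\[
\Big|\int\phi\,dR_{\vec x_0}-\E[\phi(X^0,X^1,\dots,X^k)]\Big|
\]
by $C_\ep\big(W_1^{1/2}+W_1^{1/2-\ep}\big)\big(kW_1^{1/2}+M\big)$, where $W_1=W_1(m^N_{\vec x_0},\mu_0)$. Integrating this bound against $\mu_0^{\otimes N}(d\vec x_0)$ and adding the bound from Step 2 gives the claimed estimate. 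Taking the supremum over $\phi$ finishes the proof.

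\textbf{Main obstacle.} Step 2 is the delicate point. One has to identify precisely the mixed law $\int R_{\vec x_0}\,\mu_0^{\otimes N}(d\vec x_0)$ as the law under sampling with replacement, which relies on the explicit form of $P^{\nu}(\cdot|\bx^0)$ from Remark~\ref{rem:PQ}. The remaining steps are routine applications of earlier results.
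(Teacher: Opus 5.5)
Your proof is correct and follows the paper's argument: the same triangle inequality through the intermediate mixture $\int R_{\vec x_0}\,\mu_0^{\otimes N}(d\vec x_0)$, the same bound $M\bigl(1-\frac{N!}{N^k(N-k)!}\bigr)$ on the first term (sampling with versus without replacement, plus exchangeability), and the same integration of \eqref{eq:depmuk} with $\hat\mu_0=m^N_{\vec x_0}$ against $\mu_0^{\otimes N}$ for the second term. Your random-index coupling is the probabilistic phrasing of the paper's Sznitman-type decomposition over injective versus arbitrary maps $f:\{1,\dots,k\}\to\{1,\dots,N\}$, and it has the minor merit of automatically tracking how $Q^{m^N_{\vec x_0}}$ and $\mathcal Y^{m^N_{\vec x_0}}$ depend on $\vec x_0$.
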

\begin{remark} Note that the right-hand side tends to $0$ as $N$ tends to infinity. Indeed, for the first term, we have $1-\frac{N!}{N^k(N-k)!}\le \frac{k(k-1)}{2N}$, since one easily checks by induction on  $k\in\{0,\cdots,N\}$ that $\frac{N!}{(N-k)!}\ge N^k-N^{k-1}\frac{k(k-1)}2$. The second term tends to $0$ by the Glivenko-Cantelli law of large numbers and, following Fournier and Guillin~\cite{FG15}, the convergence rate is actually algebraic in $N$. 
\end{remark}
\begin{proof}
We first note that, for any $N\ge k\ge 1$ and by the triangle inequality, 
\begin{align}\label{zkqjhesdn}
& d_{{\rm B}_M{\rm L}}({\mathcal L}(\bar X^{N},X^{1,N},\cdots,X^{k,N}),{\mathcal L}(X^0,X^1,\cdots,X^k))\notag\\
& \;  \leq d_{{\rm B}_M{\rm L}}\Bigl({\mathcal L}(\bar X^{N},X^{1,N},\cdots,X^{k,N}), \int_{\vec{x}_0}\prod_{i=1}^kP^{m^N_{\vec{x}_0}}(d\bx^i|\bx^0)Q^{m^N_{\vec{x}_0}}(d\bx^0)\mu_0^{\otimes N}(d\vec{x}_0)\Bigr) \\ 
& \qquad  + d_{{\rm B}_M{\rm L}} \bigg(\int_{\vec{x}_0}\prod_{i=1}^kP^{m^N_{\vec{x}_0}}(d\bx^i|\bx^0)Q^{m^N_{\vec{x}_0}}(d\bx^0)\mu_0^{\otimes N}(d\vec{x}_0),\prod_{i=1}^kP^{\mu_0}(d\bx^i|\bx^0)Q^{\mu_0}(d\bx^0)\bigg). \notag
\end{align}    
In order to estimate the first term in the right-hand side of the inequality above, we recall that, according to Proposition \ref{prop:exunsyspartnat},
  \small \begin{align} \label{eq:loisyspartinit} 
    {\mathcal L}&(\bar X^{N},X^{1,N},\cdots,X^{k,N})\notag \\
   & =\int_{\vec{x}_0\in \{\R^d\}^N} \delta_{\left(\bx^0+{\mathcal Y}^{m^N_{\vec{x}_0}}(\cdot,x^1_0,
    \bx^0),\cdots,\bx^0+{\mathcal Y}^{m^N_{\vec{x}_0}}(\cdot,x^k_0,
                                                        \bx^0)\right)}(d\bx^1,\cdots,d\bx^k)\mu_0^{\otimes N}(d\vec{x}_0)Q^{m^N_{\vec{x}_0}}(d\bx^0)\notag\\
 &=\int_{\vec{x}_0\in \{\R^d\}^N,(y^1_0,\cdots,y^k_0)\in \{\R^d\}^k} \delta_{\left(\bx^0+{\mathcal Y}^{m^N_{\vec{x}_0}}(\cdot,y^1_0,
    \bx^0),\cdots,\bx^0+{\mathcal Y}^{m^N_{\vec{x}_0}}(\cdot,y^k_0, \bx^0)\right)}(d\bx^1,\cdots,d\bx^k)\\
 & \qquad \qquad \qquad\qquad\qquad\qquad\qquad \times \delta_{(x^1_0,\cdots,x^k_0)}(dy^1_0,\cdots,dy^k_0)Q^{m^N_{\vec{x}_0}}(d\bx^0)\mu_0^{\otimes N}(d\vec{x}_0).\notag
  \end{align}\normalsize 
  On the other hand, by Remark \ref{rem:PQ}, $P^{m^N_{\vec{x}_0}}(d\bx|\bx^0)=\ds{\int_{y_0\in\R^d}\delta_{\bx^0+{\mathcal Y}^{m^N_{\vec{x}_0}}(\cdot,y_0,
          \bx^0)}(d\bx)m^N_{\vec{x}_0}(dy_0)}$, so that
  \begin{align}\label{eq:loisyspartinit2}
& \int_{\vec{x}_0\in \{\R^d\}^N} \prod_{i=1}^kP^{m^N_{\vec{x}_0}}(d\bx^i|\bx^0)Q^{m^N_{\vec{x}_0}}(d\bx^0)\mu_0^{\otimes N}(d\vec{x}_0) \notag \\
&  \qquad = 
  \int_{\vec{x}_0\in \{\R^d\}^N, (y^1_0,\cdots,y^k_0)\in \{\R^d\}^k}\delta_{\left(\bx^0+{\mathcal Y}^{m^N_{\vec{x}_0}}(\cdot,y^1_0,
    \bx^0),\cdots,\bx^0+{\mathcal Y}^{m^N_{\vec{x}_0}}(\cdot,y^k_0,
          \bx^0)\right)}(d\bx^1,\cdots,d\bx^k)\notag\\
          & \qquad\qquad\qquad \times (m^N_{\vec{x}_0})^{\otimes k}(dy^1_0,\cdots,dy^k_0)Q^{m^N_{\vec{x}_0}}(d\bx^0)\mu_0^{\otimes N}(d\vec{x}_0) . 
       \end{align}
In order to compare the two expressions, we need to replace  $\delta_{(x^1_0,\cdots,x^k_0)}(dy^1_0,\cdots,dy^k_0)$ in \eqref{eq:loisyspartinit} by $(m^N_{\vec{x}_0})^{\otimes k}(dy^1_0,\cdots,dy^k_0)$. This can be done by following the proof of \cite[Proposition 2.2]{Sz06}.
  For $N\ge k\ge 1$, we have
  $$\int_{\vec{x}_0\in \{\R^d\}^N}\delta_{(x^{1}_0,\cdots,x^{k}_0)}\mu_0^{\otimes N}(d\vec{x}_0)=\mu_0^{\otimes k}=\frac{(N-k)!}{N!}\sum_{f\in{\mathcal I}_{k,N}}\int_{\vec{x}_0\in \{\R^d\}^N}\delta_{(x^{f(1)}_0,\cdots,x^{f(k)}_0)}\mu_0^{\otimes N}(d\vec{x}_0)$$ where the summation is over the set ${\mathcal I}_{k,N}$ of one-to-one functions $f:\{1,\cdots,k\}\to\{1,\cdots,N\}$ which is of cardinality $N!/(N-k)!$ while 
  $$\int_{\vec{x}_0}(m^N_{\vec{x}_0})^{\otimes k}\mu_0^{\otimes N}(d\vec{x}_0)=\frac 1{N^k}\sum_{f\in{\mathcal F}_{k,N}}\int_{\vec{x}_0}\delta_{(x^{f(1)}_0,\cdots,x^{f(k)}_0)}\mu_0^{\otimes N}(d\vec{x}_0)$$
  where the summation is over the set ${\mathcal F}_{k,N}$ of functions $f:\{1,\cdots,k\}\to\{1,\cdots,N\}$ which is of cardinality $N^k$. As a consequence,
  \begin{align*}
    1-\frac{N!}{N^k(N-k)!}&\ge d_{\rm TV}\left(\int_{\vec{x}_0}(m^N_{\vec{x}_0})^{\otimes k}\mu_0^{\otimes N}(d\vec{x}_0),\int_{\vec{x}_0}\delta_{(x^{1}_0,\cdots,x^{k}_0)}\mu_0^{\otimes N}(d\vec{x}_0)\right)\\
    &=d_{\rm TV}\left(\int_{\vec{x}_0}(m^N_{\vec{x}_0})^{\otimes k}\otimes Q^{m^N_{\vec{x}_0}}\mu_0^{\otimes N}(d\vec{x}_0),\int_{\vec{x}_0}\delta_{(x^{1}_0,\cdots,x^{k}_0)}\otimes Q^{m^N_{\vec{x}_0}}\mu_0^{\otimes N}(d\vec{x}_0)\right).
  \end{align*}
       We deduce with \eqref{eq:loisyspartinit} and \eqref{eq:loisyspartinit2} that 
  \begin{align*}
   d_{\rm TV}\bigg({\mathcal L}(\bar X^{N},X^{1,N},\cdots,X^{k,N}), \int_{\vec{x}_0}\prod_{i=1}^kP^{m^N_{\vec{x}_0}}(d\bx^i|\bx^0)Q^{m^N_{\vec{x}_0}}(d\bx^0)\mu_0^{\otimes N}(d\vec{x}_0)\bigg)\le 1-\frac{N!}{N^k(N-k)!}.
  \end{align*}
This complete the estimate of the first term in \eqref{zkqjhesdn}, since, as the test functions in $d_{{\rm B}_M{\rm L}}$ are bounded by $M/2$, we have that $d_{{\rm B}_M{\rm L}}\le Md_{\rm TV}$. To complete the proof, we now turn to the estimate of the second term  in \eqref{zkqjhesdn}. We have, by \eqref{eq:depmuk},
\begin{align*}
  d_{{\rm B}_M{\rm L}}& \bigg(\int_{\vec{x}_0}\prod_{i=1}^kP^{m^N_{\vec{x}_0}}(d\bx^i|\bx^0)Q^{m^N_{\vec{x}_0}}(d\bx^0)\mu_0^{\otimes N}(d\vec{x}_0),\prod_{i=1}^kP^{\mu_0}(d\bx^i|\bx^0)Q^{\mu_0}(d\bx^0)\bigg)\\&\le \int_{\vec{x}_0}d_{{\rm B}_M{\rm L}}\left(\prod_{i=1}^kP^{m^N_{\vec{x}_0}}(d\bx^i|\bx^0)Q^{m^N_{\vec{x}_0}}(d\bx^0),\prod_{i=1}^kP^{\mu_0}(d\bx^i|\bx^0)Q^{\mu_0}(d\bx^0)\right)\mu_0^{\otimes N}(d\vec{x}_0)\\
& \le C_\ep \int_{\vec{x}_0} \left(W_1(m^N_{\vec{x}_0}, \mu_0)^{1/2}+ W_1(m^N_{\vec{x}_0}, \mu_0)^{1/2-\ep}\right)  \left(k W_1(m^N_{\vec{x}_0}, \mu_0)^{1/2}+ M\right)\mu_0^{\otimes N}(d\vec{x}_0).
\end{align*}
\end{proof}

\subsubsection{The well-prepared particle system} 

Let us now consider a second particle system for which we recover propagation of chaos with optimal rate of convergence $N^{-1/2}$ :
\begin{align}\label{PartSystNIN}
\begin{cases}
   &\ds X^{i,N}_t=X^i_0+\sigma^0 W^0_t+\int_0^t b(X^{i,N}_s,X^{0,N}_s)ds\mbox{ with }\\
   &\ds X^{0,N}_s= \frac 1N\sum_{j=1}^N X^{j,N}_s+ \bar \mu_0- \frac 1N\sum_{j=1}^NX^j_0.
\end{cases}
\end{align}
As in \eqref{eq:part1}, $W^0$ is a Brownian motion independent of the initial conditions $( X^i_0)_{i=1,\cdots,N}$  i.i.d. according to $\mu_0$. Note that this system is a small variant of the  more natural one \eqref{eq:part1} where the empirical mean $\bar X^{N}_s=\frac 1N\sum_{j=1}^N X^{j,N}_s$ is replaced by $X^{0,N}_s= \bar X^{N}_s+ \bar \mu_0- \frac 1N\sum_{j=1}^NX^j_0$ as the second variable of the drift of each particle. The main advantage of this replacement, which can be seen as a control variates variance reduction technique, is that now the initial position of this second variable is the same in the particle system and the limiting stochastic differential equation \eqref{eq.barXsigma=0BIS}. Since $b$ is merely measurable in its second variable, this will improve the convergence rate. 
Our aim is to prove that \eqref{PartSystNIN} admits a unique weak solution and that it converge in law to the solution of \eqref{eq.barXsigma=0BIS}.
For $\vec{x}_0=(x^1_0,\cdots,x^N_0)\in\{\R^d\}^N$ and ${\bx}\in {\mathcal C}$, we denote by $({\mathcal Y}^{i,N}(t,\vec{x}_0,{\bx}))_{t\in[0,T],1\le i\le N}$ the unique solution to the $N\times d$ dimensional ordinary differential equation
\begin{equation}
\begin{array}{l}
 \ds  {\mathcal Y}^{i,N}(t,\vec{x}_0,{\bx})= x^i_0-\bar \mu_0+\int_0^tb({\mathcal Y}^{i,N}(s,\vec{x}_0,{\bx})+{\bx}_s,{\bx}_s) \\
\ds    \qquad\qquad\qquad \qquad  -\frac 1N\sum_{j=1}^Nb({\mathcal Y}^{j,N}(s,\vec{x}_0,{\bx})+{\bx}_s,{\bx}_s)ds,\;t\in[0,T],\;1\le i\le N,\label{eq:edN}
\end{array}
\end{equation}
 given by the Cauchy-Lipschitz theorem when $b$ is bounded and Lipschitz continuous in its first variable.
We also set 
\begin{equation}
   b_N(t,\vec{x}_0,z,{\bx})=\frac 1 N\sum_{j=1}^Nb({\mathcal Y}^{j,N}(t,\vec{x}_0,{\bx})+z,z)\mbox{ for }(t,z)\in[0,T]\times\R^d,\label{eq:defbN}
\end{equation}to rewrite
\begin{equation}
   {\mathcal Y}^{i,N}(t,\vec{x}_0,{\bx})= x^i_0-\bar \mu_0+\int_0^tb({\mathcal Y}^{i,N}(s,\vec{x}_0,{\bx})+{\bx}_s,{\bx}_s)-b_N(s,\vec{x}_0,{\bx}_s,{\bx})ds.\label{ynmod}
\end{equation}

\begin{proposition} \label{prop:exunPartSystNIN}Under Assumption \eqref{Hyp.noCN}, there exists a weak solution to the particle system \eqref{PartSystNIN}. Moreover, the distribution of any weak solution $(X^{i,N})_{i=0, \dots, N}$ is given by the image 
  of $$\exp\left(\int_0^T b_N(s,\vec{x}_0,{\bx}^0_s,{\bx}^0).d{\bx}^0_s-\frac 12\int_0^T|b_N(s,\vec{x}_0,{\bx}^0_s,{\bx}^0)|^2ds\right)\mu_0^{\otimes N}(d\vec{x}_0){\mathcal W}^{\bar\mu_0}(d{\bx}^0)$$ by
$$\{\R^d\}^N\times {\mathcal C}\ni({\vec{x}}_0,{\bx}^0)\mapsto \left({\bx}^0,{\bx}^0+\mathcal Y^{1,N}(\cdot,\vec{x}_0,{\bx}^0),\cdots,{\bx}^0+\mathcal Y^{N,N}(\cdot,\vec{x}_0,{\bx}^0)\right)\in {\mathcal C}^{1+N}.$$
\end{proposition}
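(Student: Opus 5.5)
The plan mirrors the proof of Proposition \ref{prop:exunsyspartnat}, with the only change that the centering of the nonlocal variable is now $\bar\mu_0$ instead of the empirical mean of the initial conditions. Throughout, $\sigma^0=1$. The key algebraic observation is the following. If $(X^{i,N})$ solves \eqref{PartSystNIN}, then $Y^i_t:=X^{i,N}_t-X^{0,N}_t$ satisfies
\[
Y^i_t=X^i_0-\bar\mu_0+\int_0^t \Big(b(Y^i_s+X^{0,N}_s,X^{0,N}_s)-\frac1N\sum_{j=1}^N b(Y^j_s+X^{0,N}_s,X^{0,N}_s)\Big)ds .
\]
Indeed, $X^{0,N}_t=\bar\mu_0+\frac1N\sum_j(X^{j,N}_t-X^j_0)=\bar\mu_0+W^0_t+\int_0^t\frac1N\sum_j b(X^{j,N}_s,X^{0,N}_s)ds$. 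Summing over $i$ gives $\sum_i Y^i_t=0$, consistently with this display.

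\textbf{Uniqueness.} Since $b$ is Lipschitz in its first variable, the Cauchy--Lipschitz uniqueness for \eqref{eq:edN}, applied pathwise with $\bx=X^{0,N}$, gives $Y^i=\mathcal Y^{i,N}(\cdot,\vec X_0,X^{0,N})$. Consequently
\[
X^{0,N}_t=\bar\mu_0+\int_0^t b_N(s,\vec X_0,X^{0,N}_s,X^{0,N})\,ds+W^0_t .
\]
The drift $b_N$ is bounded by $\|b\|_\infty$. It is also nonanticipating in $\bx$, since $\mathcal Y^{i,N}(t,\vec x_0,\bx)$ depends only on $\bx_{\cdot\wedge t}$, and it is jointly measurable, by Picard iteration as in Lemma \ref{lem.mathcalY}. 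Girsanov's theorem with density $\exp\{-\int b_N\cdot dW^0-\frac12\int|b_N|^2\}$ then produces $\Q$ under which $X^{0,N}-\bar\mu_0$ is a Brownian motion independent of $\vec X_0$. Here we use that $\vec X_0$ is $\F_0$-measurable and independent of $W^0$, and that the density has conditional expectation $1$ given $\F_0$, so $\vec X_0\sim\mu_0^{\otimes N}$ still. Inverting the density and writing $(X^{0,N},X^{1,N},\dots,X^{N,N})$ as the explicit function $(\bx^0,\bx^0+\mathcal Y^{1,N}(\cdot,\vec x_0,\bx^0),\dots)$ of $(\vec X_0,X^{0,N})$ yields exactly the announced image measure. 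The stochastic integral $\int b_N\cdot dW^0$ is rewritten as $\int b_N\cdot dX^{0,N}-\int|b_N|^2ds$, exactly as in the proof of Theorem \ref{thm.existenceNIN}.

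\textbf{Existence.} On a space carrying $\vec X_0\sim\mu_0^{\otimes N}$ independent of a Brownian motion $B$, set $X^{0,N}=\bar\mu_0+B$. Define $\P$ by the Girsanov density $\exp\{\int_0^T b_N(s,\vec X_0,X^{0,N}_s,X^{0,N})\cdot dB_s-\frac12\int_0^T|b_N|^2ds\}$, and set $W^0=B-\int_0^\cdot b_N\,ds$, which is a $\P$-Brownian motion independent of $\vec X_0$. Put $X^{i,N}=X^{0,N}+\mathcal Y^{i,N}(\cdot,\vec X_0,X^{0,N})$. Using \eqref{ynmod} one checks $X^{i,N}_t=X^i_0+W^0_t+\int_0^tb(X^{i,N}_s,X^{0,N}_s)ds$. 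Averaging over $i$ and using $\sum_i\mathcal Y^{i,N}=\sum_i x^i_0-N\bar\mu_0$ gives $\frac1N\sum_jX^{j,N}_t+\bar\mu_0-\frac1N\sum_jX^j_0=X^{0,N}_t$, which is the second line of \eqref{PartSystNIN}.

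No step is a genuine obstacle. The only points needing care are the measurability and nonanticipativity of $b_N$, which make the Girsanov density well defined and adapted, and the preservation of the law and independence of $\vec X_0$ under the change of measure. Both are handled exactly as in the proofs of Theorem \ref{thm.existenceNIN} and Proposition \ref{prop:exunsyspartnat}.
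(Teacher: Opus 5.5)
Your proposal is correct and follows essentially the same route as the paper. For existence you change measure by Girsanov from $\bar\mu_0+B$ and set $X^{i,N}=X^{0,N}+\mathcal Y^{i,N}(\cdot,\vec X_0,X^{0,N})$; for uniqueness in law you identify $X^{i,N}-X^{0,N}$ with $\mathcal Y^{i,N}(\cdot,\vec X_0,X^{0,N})$ by Cauchy--Lipschitz and then remove the drift of $X^{0,N}$ by Girsanov.

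One harmless slip: summing your display for $Y^i$ gives $\sum_i Y^i_t=\sum_i X^i_0-N\bar\mu_0$, not $0$. You use the correct identity yourself in the existence part, and the incorrect remark plays no role in the argument.
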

\begin{remark}
   Note that for $\vec{x}_0=(x^1_0,\cdots,x^N_0)\in\{\R^d\}^N$ and ${\bx}\in {\mathcal C}$, $(\mathcal Y^{m^N_{{\vec{x}}_0}}(\cdot,x^i_0,{\bx}^0))_{1\le i\le N}$ and $({\mathcal Y}^{i,N}(t,\vec{x}_0,{\bx}))_{1\le i\le N}$ are closely related : the term $-\bar \mu_0$ in the right-hand side of the ordinary differential equation \eqref{eq:edN} defining the second system is simply replaced by $-\bar{m^N_{{\vec{x}}_0}}=-\frac 1N\sum_{j=1}^Nx^j_0$ to get the ordinary differential equation \eqref{eq:discdyn} defining the first system. 
 \end{remark} \begin{proof}
  Let $(\vec{X}_0,X^{0,N})$ with $\vec{X}_0=(X^1_0,\cdots,X^N_0)$ be distributed according to $$\exp\left(\int_0^t b_N(s,\vec{x}_0,{\bx}^0_s,{\bx}^0).d{\bx}^0_s-\frac 12\int_0^t|b_N(s,\vec{x}_0,{\bx}^0_s,{\bx}^0)|^2ds\right)\mu_0^{\otimes N}(d\vec{x}_0){\mathcal W}^{\bar \mu_0}(d{\bx}^0).$$ By Girsanov's theorem, $(W^0_t=X^{0,N}_t-\bar\mu_0-\int_0^t b_N(s,\vec{X}_0,X^{0,N}_s,X^{0,N})ds)_{t\in[0,T]}$ is a Brownian motion independent from the initial vector $\vec{X}_0$ with components i.i.d. according to $\mu_0$. Setting $X^{i,N}_t=X^{0,N}_t+{\mathcal Y}^{i,N}(t,\vec{X}_0,X^{0,N})$, we get, using \eqref{ynmod} for the first equality and \eqref{eq:defbN} for the third one,
\begin{align*}
   &X^{i,N}_t= X^i_0+W^0_t+\int_0^t b(X^{i,N}_s,X^{0,N}_s)ds\mbox{ with }\\&X^{0,N}_t=
\bar \mu_0+W^0_t+\int_0^t b_N(s,\vec{X}_0,X^{0,N}_s,X^{0,N})ds=\bar \mu_0+W^0_t+\int_0^t\frac 1N\sum_{j=1}^Nb(X^{j,N}_s,X^{0,N}_s)ds\\&\phantom{X^{0,N}_t}=\bar \mu_0+\frac 1N\sum_{j=1}^N(X^{j,N}_t-X^j_0).\end{align*}
Thus, $(X^{i,N})_{i=1, \dots, N}$ is a weak solution of \eqref{PartSystNIN}. 

Let us conversely suppose that $(X^{i,N})_{i=1, \dots, N}$ is a weak solution of \eqref{PartSystNIN}. Then, $X^{0,N}_t=\bar\mu_0+W^0_t+\int_0^t\frac 1N\sum_{j=1}^Nb(X^{j,N}_s,X^{0,N}_s)ds$ and, by uniqueness for \eqref{eq:edN}, $(X^{i,N}-X^{0,N})_{i=1, \dots, N}=({\mathcal Y}^{i,N}(\cdot,\vec{X}_0,X^{0,N}))_{i=1, \dots, N}$ where $\vec{X}_0=(X^1_0,\cdots,X^N_0)$. Hence
\begin{align*}
   X^{0,N}_t&=\bar\mu_0+W^0_t+\int_0^t\frac 1N\sum_{j=1}^Nb({\mathcal Y}^{j,N}(s,\vec{X}_0,X^{0,N})+X^{0,N}_s,X^{0,N}_s)ds\\&=\bar\mu_0+W^0_t+\int_0^tb_N(s,\vec{X}_0,X^{0,N}_s,X^{0,N})ds
\end{align*}

By Girsanov's theorem, we deduce that $(X^{i,N})_{i=0, \dots, N}$ is distributed according to the image of $\exp\left(\int_0^t b_N(s,\vec{x}_0,{\bx}^0_s,{\bx}^0).d{\bx}^0_s-\frac 12\int_0^t|b_N(s,\vec{x}_0,{\bx}^0_s,{\bx}^0)|^2ds\right)\mu_0^{\otimes N}(d\vec{x}_0){\mathcal W}^{\bar \mu_0}(d{\bx_0})$ by $$\{\R^d\}^N\times {\mathcal C}\ni({\vec{x}}_0,{\bx}^0)\mapsto \left({\bx}^0,{\bx}^0+\mathcal Y^{1,N}(\cdot,\vec{x}_0,{\bx}^0),\cdots,{\bx}^0+\mathcal Y^{N,N}(\cdot,\vec{x}_0,{\bx}^0)\right).$$
\end{proof}


We now discuss the propagation of chaos for the well prepared particle system \eqref{PartSystNIN}. 

\begin{theorem}\label{thm.cvparticleNOIN} For $k\in\{1,\cdots,N\}$, let ${\mathcal L}(X^{0,N},X^{1,N},\cdots,X^{k,N})$ denote the distribution of the second variable of the drift coefficient together with $k$ particles in the particle system \eqref{PartSystNIN} and ${\mathcal L}(X^0,X^1,\cdots,X^k)$ the distribution of the $X^0$ component of the weak solution to \eqref{eq.barXsigma=0BIS} given by Theorem \ref{thm.existenceNIN} together with $k$ conditionally independent copies $(X^1,\cdots,X^k)$ distributed according to the conditional law of the $X$ component given $X^0$.
 Under Assumption \eqref{Hyp.noCN}, there is a constant $C>0$ such that, for any $N\ge k\ge 1$, 
 $$
d_{{\rm B}_{k}{\rm L}}({\mathcal L}(X^{0,N},X^{1,N},\cdots,X^{k,N}),{\mathcal L}(X^0,X^1,\cdots,X^k))\le\frac {C k}{\sqrt{N}}.$$
 \end{theorem}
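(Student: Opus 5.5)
Both laws can be written on a common reference space: $\mu_0^{\otimes N}(d\vec x_0)\,{\mathcal W}^{\bar\mu_0}(d\bx^0)$, under which $\vec X_0$ has i.i.d.\ $\mu_0$ components and $X^0$ is an independent Brownian motion started at $\bar\mu_0$.

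\emph{Particle system.} By Proposition~\ref{prop:exunPartSystNIN}, $(X^{0,N},X^{1,N},\dots,X^{k,N})$ is the image of the weighted measure
\begin{equation*}
\mathcal E^N_T\,\mu_0^{\otimes N}(d\vec x_0){\mathcal W}^{\bar\mu_0}(d\bx^0),\qquad \mathcal E^N_T=\exp\Big(\int_0^T b_N\cdot d\bx^0_s-\tfrac12\int_0^T|b_N|^2ds\Big),
\end{equation*}
under the map $(\bx^0,\bx^0+\mathcal Y^{i,N}(\cdot,\vec x_0,\bx^0))_{i\le k}$.

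\emph{Limit.} By Remark~\ref{rem:PQ}, $(X^0,X^1,\dots,X^k)$ is the image of the same reference measure, with weight $\mathcal E_T$ defined by $\bar b^{\mu_0}$ in place of $b_N$, under the map $(\bx^0,\bx^0+\mathcal Y^{\mu_0}(\cdot,x^i_0,\bx^0))_{i\le k}$. Here the first $k$ coordinates of $\vec x_0$ serve as the initial conditions of the conditionally independent copies.

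For a test function $\phi$ that is $1$-Lipschitz and bounded by $k/2$, I split the difference as
\begin{equation*}
\E\big[(\phi(\text{part.})-\phi(\text{lim.}))\mathcal E^N_T\big]+\E\big[\phi(\text{lim.})(\mathcal E^N_T-\mathcal E_T)\big].
\end{equation*}
\begin{itemize}
\item The first term is at most $\sum_{i\le k}\E[\mathcal E^N_T\|\mathcal Y^{i,N}-\mathcal Y^{\mu_0}(\cdot,x^i_0,\cdot)\|_\infty]$.
\item The second term is at most $\frac k2\E|\mathcal E^N_T-\mathcal E_T|$, which is controlled through the relative entropy and Pinsker's inequality, giving a bound of order $k\big(\E\int_0^T|b_N-\bar b^{\mu_0}|^2ds\big)^{1/2}$.
\end{itemize}
So both terms reduce to $L^2$ estimates, at fixed path $\bx^0$, of the deviations $\Delta^i_t:=\mathcal Y^{i,N}(t)-\mathcal Y^{\mu_0}(t,x^i_0)$ and $b_N-\bar b^{\mu_0}$. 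In the first term, the weight $\mathcal E^N_T$ is removed by Cauchy--Schwarz, since it has bounded moments because $b$ is bounded. I aim to show these estimates are uniform in $\bx^0$ and of order $N^{-1/2}$.

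\emph{Main step: a deterministic-path law of large numbers.} Fix $\bx^0$ and set $y^i_t=\mathcal Y^{\mu_0}(t,x^i_0,\bx^0)$. These are i.i.d.\ functions of the $x^i_0$, and by definition of $\bar b^{\mu_0}$ they satisfy $\E[b(y^i_s+\bx^0_s,\bx^0_s)]=\bar b^{\mu_0}(s,\bx^0_s,\bx^0)$. Comparing \eqref{ynmod} with the equation of Lemma~\ref{lem.mathcalY} and using that $b$ is $K$-Lipschitz in its first variable,
\begin{equation*}
|\Delta^i_t|\le K\int_0^t|\Delta^i_s|ds+K\int_0^t\frac1N\sum_j|\Delta^j_s|ds+\int_0^t|R^N_s|ds,\qquad R^N_s=\frac1N\sum_j b(y^j_s+\bx^0_s,\bx^0_s)-\bar b^{\mu_0}(s,\bx^0_s,\bx^0).
\end{equation*}
Since $R^N_s$ is a centered empirical mean of i.i.d.\ bounded terms, $\E|R^N_s|^2\le\|b\|_\infty^2/N$. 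Averaging over $i$ and applying Gronwall's lemma gives $\frac1N\sum_j|\Delta^j_t|\le C\int_0^t|R^N_s|ds$, and then $\E|\Delta^i_t|^2\le C/N$. Finally,
\begin{equation*}
|b_N-\bar b^{\mu_0}|\le\frac KN\sum_j|\Delta^j|+|R^N|,
\end{equation*}
which is also $O(N^{-1/2})$ in $L^2$. All these bounds are uniform in $\bx^0$.

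\emph{Main obstacle.} The law of large numbers above holds under the reference measure with $\bx^0$ frozen, whereas the particle law is weighted by $\mathcal E^N_T$, which depends on $\vec x_0$. I will handle this by estimating everything under the product reference measure, where $\vec x_0$ and $\bx^0$ are independent. The weights then enter only through Cauchy--Schwarz, with bounded $L^2$ norms, and through the identity
\begin{equation*}
\E|\mathcal E^N_T-\mathcal E_T|\le C\Big(\E\int_0^T|b_N-\bar b^{\mu_0}|^2ds\Big)^{1/2},
\end{equation*}
which follows from Itô's formula for $\mathcal E^N-\mathcal E$ together with bounded exponential moments. Both the particle and limit drifts are nonanticipating functionals of $\bx^0$, so this Itô argument is legitimate. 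Collecting the two terms gives the bound $Ck/\sqrt N$.
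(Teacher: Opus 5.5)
Your proposal is correct and is essentially the paper's proof. It uses the same coupling: a common path $\bx^0$ and common initial conditions, with the particle and limit laws differing only through the Girsanov weights built from $b_N$ and $\bar b^{\mu_0}$. It relies on the same fixed-path law of large numbers, the paper's Lemma~\ref{lemvitl2}, which your pathwise bound $\sup_t|\Delta^i_t|\le C\int_0^T|R^N_s|\,ds$ proves more directly. It makes the same reduction of the weight difference to $(\E\int_0^T|b_N-\bar b^{\mu_0}|^2ds)^{1/2}$. The paper merely works under the limit law rather than the Wiener reference measure, and uses Burkholder--Davis--Gundy with exponent $1$ on $\mathcal E^N/\mathcal E$ instead of Pinsker.

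One point should be stated carefully. That last bound stays at the $L^2$ level if you use the entropy of the limit law relative to the particle law, which is an expectation under the limit law where $\vec x_0$ and $\bx^0$ are independent, or if you use the exponent-$1$ BDG trick. By contrast, an $L^2$ It\^o estimate followed by Cauchy--Schwarz against $\mathcal E^N_T$ needs fourth moments of $b_N-\bar b^{\mu_0}$. This is harmless, since your pathwise bound gives $L^p$ norms of order $N^{-1/2}$ for every $p$.
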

The proof relies on the next lemma the proof of which is postponed.
\begin{lemma}\label{lemvitl2} For initial positions $(X^j_0)_{j\ge 1}$ i.i.d. according to $\mu_0$,
   $$\sup_{N\in{\mathbb N}^*,\ j\in\{1,\cdots,N\},\ {\bx}\in {\mathcal C}}\E\left[\left(\sqrt{N}\sup_{t\in[0,T]}|{\mathcal Y}^{j,N}(t,(X^1_0,\cdots,X^N_0),{\bx})-{\mathcal Y}^{\mu_0}(t,X^j_0,  \bx)|\right)^\rho\right]<\infty$$
   for any $\rho\geq 1$. 
 \end{lemma}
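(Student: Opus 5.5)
Fix $\bx\in\mathcal C$ and $N$. Write $Y^j_t={\mathcal Y}^{j,N}(t,\vec X_0,\bx)$ and $\tilde Y^j_t={\mathcal Y}^{\mu_0}(t,X^j_0,\bx)$, and set $\Delta^j_t=Y^j_t-\tilde Y^j_t$. The plan is a Gronwall argument in which the only non-Lipschitz source term is a centred empirical average of i.i.d. bounded variables; that term has $L^\rho$ norm of order $N^{-1/2}$.

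\textbf{Step 1: the equation for $\Delta^j$.} Subtracting the equation of Lemma \ref{lem.mathcalY} from \eqref{eq:edN}, the initial terms $x^j_0-\bar\mu_0$ cancel exactly. This is where the well-prepared centring by $\bar\mu_0$ is used. We obtain
$$
\Delta^j_t=\int_0^t\Big(b(Y^j_s+\bx_s,\bx_s)-b(\tilde Y^j_s+\bx_s,\bx_s)\Big)ds-\int_0^t\Big(\frac1N\sum_{i=1}^N b(Y^i_s+\bx_s,\bx_s)-\int b(\mathcal Y^{\mu_0}(s,y,\bx)+\bx_s,\bx_s)\mu_0(dy)\Big)ds.
$$
In the second integral, add and subtract $\frac1N\sum_i b(\tilde Y^i_s+\bx_s,\bx_s)$. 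Using the Lipschitz constant $K$ of $b$ in its first variable, this gives
$$
|\Delta^j_t|\le K\int_0^t|\Delta^j_s|ds+K\int_0^t\frac1N\sum_{i=1}^N|\Delta^i_s|ds+\int_0^t|R^N_s|ds,
$$
where
$$
R^N_s=\frac1N\sum_{i=1}^N\Big(b(\mathcal Y^{\mu_0}(s,X^i_0,\bx)+\bx_s,\bx_s)-\int b(\mathcal Y^{\mu_0}(s,y,\bx)+\bx_s,\bx_s)\mu_0(dy)\Big).
$$

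\textbf{Step 2: Gronwall.} Averaging the inequality over $j$ and applying Gronwall gives $\frac1N\sum_i|\Delta^i_t|\le C\int_0^T|R^N_s|ds$. Plugging this back and applying Gronwall a second time gives
$$
\sup_{t\le T}|\Delta^j_t|\le C\int_0^T|R^N_s|ds,
$$
with $C$ depending only on $K$ and $T$, and not on $\bx$, $N$ or $j$. This bound is deterministic given $\vec X_0$.

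\textbf{Step 3: moment bound on $R^N_s$.} For fixed $s$ and $\bx$, $R^N_s$ is the empirical mean of $N$ i.i.d. centred variables bounded by $2\|b\|_\infty$, since $\bx$ is deterministic and the $X^i_0$ are i.i.d. with law $\mu_0$. The Marcinkiewicz–Zygmund (or Burkholder / Rosenthal) inequality then gives $\E[|R^N_s|^\rho]\le C_\rho\|b\|_\infty^\rho N^{-\rho/2}$, uniformly in $s$ and $\bx$. Jensen's inequality in time yields $\E[(\int_0^T|R^N_s|ds)^\rho]\le T^{\rho-1}\int_0^T\E|R^N_s|^\rho ds\le C N^{-\rho/2}$. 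Multiplying by $N^{\rho/2}$ gives the uniform bound claimed in Lemma \ref{lemvitl2}.

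\textbf{Measurability.} The one point needing care is that $s\mapsto R^N_s$ is jointly measurable, so that Fubini applies. This follows from the Borel measurability of $\mathcal Y^{\mu_0}$ in Lemma \ref{lem.mathcalY} together with its continuity in $t$. Beyond this, there is no real obstacle; the key structural input is the exact cancellation of the initial terms noted in Step 1.
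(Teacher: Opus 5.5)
Your proof is correct and is essentially the paper's argument. You use the same decomposition of $\mathcal Y^{j,N}-\mathcal Y^{\mu_0}(\cdot,X^j_0,\bx)$ into two Lipschitz terms plus a centred empirical mean, Gronwall's lemma, and the $N^{-\rho/2}$ moment bound for that empirical mean. The differences are minor: you run Gronwall pathwise after averaging over $j$, which avoids the paper's exchangeability step and even gives the bound with $\max_j$ inside the expectation, and you invoke the Marcinkiewicz--Zygmund inequality where the paper reduces to even $\rho$ by Jensen and expands the $\rho$-th power.
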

 \begin{remark}In Definition \ref{def:dBL}, we could replace $\sum_{i=1}^k d_{{\mathcal Z}}(z_i,\hat z_i)$ by $\left(\sum_{i=1}^k d_{{\mathcal Z}}(z_i,\hat z_i)^\rho\right)^{1/\rho}$ to measure the Lipschitz continuity on ${\mathcal Z}^k$ leading to some distance denoted by $d_{{\rm B}_M{\rm L}_\rho}$.  For the particle system \eqref{eq:part1}, the choice $\rho=1$ was dictated by the integrability of the initial condition. For the particle system \eqref{PartSystNIN}, the convergence of which no longer relies on the convergence of $\frac 1N\sum_{j=1}^N X^j_0$ to $\bar\mu_0$ for $X^j_0$ i.i.d. according to $\mu_0$, we may consider other values of $\rho$. 
Replacing $1$ by $\rho>1$ enables to improve the dependence on $k$ in the right-hand side of  \eqref{eq:tvklip} from the present $k=k^{1/1}$ to $k^{1/\rho}$ and therefore to check that $$\exists C<\infty,\;\forall N\ge k\ge 1,\;d_{{\rm B}_{k^{1/\rho}}{\rm L}_\rho}({\mathcal L}(X^{0,N},X^{1,N},\cdots,X^{k,N}),{\mathcal L}(X^0,X^1,\cdots,X^k))\le\frac {C k^{1/\rho}}{\sqrt{N}}.$$
Note that this implies Theorem \ref{thm.cvparticleNOIN} since $d_{{\rm B}_k{\rm L}}=d_{{\rm B}_k{\rm L}_1}\le k^{1-\frac 1\rho}d_{{\rm B}_{k^{1/\rho}}{\rm L}_\rho}$, where the inequality is a consequence of 
   $\sum_{i=1}^k d_{{\mathcal Z}}(z_i,\hat z_i)\le k^{1-\frac 1\rho}\left(\sum_{i=1}^k d_{{\mathcal Z}}(z_i,\hat z_i)^\rho\right)^{1/\rho}$.
 \end{remark}
\begin{proof}[Proof of Theorem \ref{thm.cvparticleNOIN}]
Our first goal is to build a suitable coupling between\\ $\mathcal L^{k,N}= {\mathcal L}(X^{0,N},X^{1,N},\cdots,X^{k,N})$ and $\mathcal L^{k,\infty}={\mathcal L}(X^0,X^1,\cdots,X^k)$. 
  Let under the probability measure $\P$, $(\vec{X}_0,X^0)$ with $\vec{X}_0=(X^1_0,\cdots,X^N_0)$ be distributed according to $\mu_0^{\otimes N}(d\vec{x}_0)Q^{\mu_0}(d{\bx}^0)$, where we recall that $Q^{\mu_0}$ denotes the distribution of the $X^0$ component of the weak solution to \eqref{eq.barXsigma=0BIS}  given by Theorem \ref{thm.existenceNIN}. We set $X^i=X^0+{\mathcal Y}^{\mu_0}(\cdot,X^i_0,X_0)$ for $i\in\{1,\cdots,N\}$. 
  According to Remark \ref{rem:PQ}, for each $k\in\{1,\cdots,N\}$, $(X^0,X^1,\cdots,X^k)$ is distributed according to ${\mathcal L}^{k,\infty}$ and 
  $$
  Q^{\mu_0}(d{\bx}^0)=\exp\left\{ 
\int_0^T \bar b^{\mu_0}(s, {\bx}^0_s, {\bx}^0)\cdot d{\bx}^0_s 
- \frac12 \int_0^T |\bar b^{\mu_0}(s, {\bx}^0_s, {\bx}^0)|^2ds 
\right\}{\mathcal W}^{\bar\mu_0}(d{\bx}^0).
$$ In particular, $(W^0_t=X^0_t-\bar{\mu}_0-\int_0^t\bar b^{\mu_0}(s, X^0_s, X^0)ds)_{t\in[0,T]}$ is a Brownian motion independent of $\vec{X}_0$. 
 Setting \begin{align*}
   Z^N_t(\vec{x}_0,{\bx}^0)=\exp\Big(&\int_0^t (b_N(s,\vec{x}_0,{\bx}^0_s,{\bx}^0)-\bar b^{\mu_0}(s,{\bx}^0_s,{\bx}^0)).d{\bx}^0_s\\&+\frac 12\int_0^t\{|\bar b^{\mu_0}(s,{\bx}^0_s,{\bx}^0)|^2-|b_N(s,\vec{x}_0,{\bx}^0_s,{\bx}^0)|^2\}ds\Big),
\end{align*} we have \begin{align*}
   \exp&\left(\int_0^T b_N(s,\vec{x}_0,{\bx}^0_s,{\bx}^0).d{\bx}^0_s-\frac 12\int_0^T|b_N(s,\vec{x}_0,{\bx}^0_s,{\bx}^0)|^2ds\right)\mu_0^{\otimes N}(d\vec{x}_0){\mathcal W}^{\bar \mu_0}(d{\bx}^0)\\&=Z^N_T(\vec{x}_0,{\bx}^0)\mu_0^{\otimes N}(d\vec{x}_0)Q^{\mu_0}(d{\bx}^0).\end{align*}
We also set $X^{i,N}=X^0+{\mathcal Y}^{i,N}(\cdot,\vec{X}_0,X^0)$ for $i\in\{1,\cdots,N\}$, where $\mathcal Y^{i,N}$ is defined in \eqref{ynmod}. According to Proposition \ref{prop:exunPartSystNIN}, under the probability measure with density \begin{align*}\frac{d{\mathbb Q}_N}{d\P}=Z^N_T(\vec{X}_0,X^0)=\exp\Big(&\int_0^t (b_N(s,\vec{X}_0,X^0_s,X^0)-\bar b^{\mu_0}(s,X^0_s,X^0)).dW^0_s\\&-\frac 12\int_0^t|b_N(s,\vec{X}_0,X^0_s,X^0)-\bar b^{\mu_0}(s,X^0_s,X^0)|^2ds\Big),
\end{align*}  
$(X^0,X^{1,N},\cdots,X^{k,N})$ is distributed according to ${\mathcal L}^{k,N}$. 
Let $\phi:{\mathcal C}^{1+k}\to\R$ be bounded by ${k}/2$ and $1$-Lipschitz. We have using the definition and the exchangeability of $(X^{i,N},X^i)_{i=1,\cdots,N}$ and the independence between $X^0$ and the initial conditions $( X^i_0)_{i\ge 1}$ under ${\mathbb P}$ for the second inequality, and then Lemma \ref{lemvitl2} for the third one,
\begin{align}
  \Bigl|\E_{\mathbb Q_N}[\phi(X^0&,X^{1,N},\cdots,X^{k,N})]-\E[\phi(X^0,X^{1},\cdots,X^k)]\Bigr|\notag \\
  \le &\left|\E_{\mathbb Q_N}[\phi(X^0,X^{1,N},\cdots,X^{k,N})]-\E[\phi(X^0,X^{1,N},\cdots,X^{k,N})]\right|+\E\left[\sum_{i=1}^k\sup_{t\in[0,T]}|X^{i,N}_t-X^i_t|\right]\notag\\\le &2{k}d_{\rm TV}({\mathbb Q_N},{\mathbb P})+{k}\E\left[\E\left[\sup_{t\in[0,T]}|{\mathcal Y}^{1,N}(t,{\bx})-{\mathcal Y}^{\mu_0}(t, X^1_0,{\bx})|\right] \bigg|_{{\bx}=X^0}\right]\notag\\
  \le &2{k}d_{\rm TV}({\mathbb Q_N},{\mathbb P})+\frac{C{k}}{\sqrt{N}}.\label{eq:tvklip}
\end{align}
Since $d_{{\rm B}_{{k}}{\rm L}}({\mathcal L}(X^{0,N},X^{1,N},\cdots,X^{k,N}),{\mathcal L}(X^0,X^1,\cdots,X^k))$ is the supremum of the left-hand side over $\phi:{\mathcal C}^{1+k}\to\R$ bounded by ${k}/2$ and $1$-Lipschitz, we conclude the proof by checking that $\sup_{N\in{\mathbb N}^*}\sqrt{N}d_{\rm TV}({\mathbb Q_N},{\mathbb P})<\infty$. We write $Z^N_t$ instead of $Z^N_t(\vec{X}_0,X^0)$ to lighten the notation.
Using the Burkholder-Davis-Gundy, Cauchy-Schwarz and Doob inequalities, we get
\begin{align}
  d_{\rm TV}({\mathbb Q_N},{\mathbb P})&=\E[|Z^N_T-1|]=\E\left[\left|\int_0^T Z^N_t (b_N(t,\vec{X}_0,X^0_t,X^0)-\bar b^{\mu_0}(t,X^0_t,X^0)).dW^0_t\right|\right]\notag\\
                                       &\le 2\E\left[\left(\int_0^T(Z^N_t)^2|b_N(t,\vec{X}_0,X^0_t,X^0)-\bar b^{\mu_0}(t,X^0_t,X^0)|^2dt\right)^{1/2}\right]\notag\\&\le 2\E\left[\sup_{s\in[0,T]}|Z^N_s|\left(\int_0^T|b_N(t,\vec{X}_0,X^0_t,X^0)-\bar b^{\mu_0}(t,X^0_t,X^0)|^2dt\right)^{1/2}\right]\notag\\
  &\le 4\E[(Z^N_T)^2]^{1/2}\E\left[\int_0^T|b_N(t,\vec{X}_0,X^0_t,X^0)-\bar b^{\mu_0}(t,X^0_t,X^0)|^2dt\right]^{1/2}.\label{tvqnp}
\end{align}
We have
\begin{align*}
(Z^{N}_T)^2& \le e^{4\|b\|_\infty^2T}\exp\Bigl(2\int_0^t (b_N(s,\vec{X}_0,X^0_s,X^0)-\bar b^{\mu_0}(s,X^0_s,X^0)).dW^0_s\\
&\qquad\qquad\qquad  -2\int_0^t|b_N(s,\vec{X}_0,X^0_s,X^0)-\bar b^{\mu_0}(s,X^0_s,X^0)|^2ds\Bigr),
\end{align*}
so that $\E[(Z^N_T)^2]^{1/2}\le e^{2\|b\|_\infty^2T}$. On the other hand, for $t\in[0,T]$, since, by \eqref{eq:defbN} and \eqref{defbarb}, $$b_N(t,\vec{X}_0,{\bx}_t,{\bx})-\bar b^{\mu_0}(t,{\bx}_t,{\bx})=\frac 1N\sum_{j=1}^N b({\mathcal Y}^{j,N}(t,\vec{X}_0,{\bx})+{\bx}_t,{\bx}_t)-\E[b({\mathcal Y}^{\mu_0}(t,X^1_0,{\bx})+{\bx}_t,{\bx}_t)],$$ we have, using the Lipschitz continuity of $b$ with constant $K$ in its first variable, \begin{align*}
   \frac 12|b_N(t,\vec{X}_0,{\bx}_t,{\bx})&-\bar b^{\mu_0}(t,{\bx}_t,{\bx})|^2\le \frac{K^2}{N}\sum_{j=1}^N|{\mathcal Y}^{j,N}(t,\vec{X}_0,{\bx})-{\mathcal Y}^{\mu_0}(t,X^j_0,{\bx})|^2\\&+\left|\frac 1N\sum_{j=1}^N \left(b({\mathcal Y}^{\mu_0}(t,X^j_0,{\bx})+{\bx}_t,{\bx}_t)-\E[b({\mathcal Y}^{\mu_0}(t,X^1_0,{\bx})+{\bx}_t,{\bx}_t)]\right)\right|^2
                                                                                                                                                                                                                                                                                                                                                                     .\end{align*}

                                                                                                                                                                                                                                                                             With Lemma \ref{lemvitl2}, we deduce that
                                                                                                                                                                                                                                                                             $$\sup_{N\in{\mathbb N}^*,{\bx}\in {\mathcal C},t\in [0,T]}N\E\left[|b_N(t,\vec{X}_0,{\bx}_t,{\bx})-\bar b^{\mu_0}(t,{\bx}_t,{\bx})|^2\right]<\infty.$$
Since, by the independence of $X^0$ and the initial conditions $( X^i_0)_{i\ge 1}$ under ${\mathbb P}$, 
\begin{align*}
  \E\left[|b_N(t,\vec{X}_0,X^0_t,X^0)-\bar b^{\mu_0}(t,X^0_t,X^0)|^2\right]=\E\left[\E\left[\left|b_N(t,\vec{X}_0,{\bx}_t,{\bx})-\bar b^{\mu_0}(t,{\bx}_t,{\bx})\right|^2\right] \bigg|_{{\bx}=X^0}\right],
\end{align*}
we deduce that $\sup_{N\in{\mathbb N}^*,t\in [0,T]}N\E\left[|b_N(t,\vec{X}_0,X^0_t,X^0)-\bar b^{\mu_0}(t,X^0_t,X^0)|^2\right]<\infty$. 
Plugging this estimation in \eqref{tvqnp}, we conclude that $\sup_{N\in{\mathbb N}^*}\sqrt{N}d_{\rm TV}({\mathbb Q_N},{\mathbb P})<\infty$.   
\end{proof}

\begin{proof}[Proof of Lemma \ref{lemvitl2}]
By
Jensen's inequality, it is enough to deal with the case when $\rho$ is
an even integer.  We have, setting $\vec{X}_0=(X^1_0,\cdots,X^N_0)$,
  \begin{align*}
    {\mathcal Y}^{i,N}(t,\vec{X}_0,{\bx})-{\mathcal Y}^{\mu_0}(t, X^i_0,{\bx})&=\int_0^tb({\mathcal Y}^{i,N}(s,\vec{X}_0,{\bx})+{\bx}_s,{\bx}_s)-b({\mathcal Y}^{\mu_0}(s, X^i_0,{\bx})+{\bx}_s,{\bx}_s)ds\\&+\int_0^t\frac 1N \sum_{j=1}^N\left(b({\mathcal Y}^{\mu_0}(s,X^j_0,{\bx})+{\bx}_s,{\bx}_s)-b({\mathcal Y}^{j,N}(s,\vec{X}_0,{\bx})+{\bx}_s,{\bx}_s)\right)ds\\&+\int_0^t\E[b({\mathcal Y}^{\mu_0}(s,X^1_0,{\bx})+{\bx}_s,{\bx}_s)]-\frac 1N \sum_{j=1}^Nb({\mathcal Y}^{\mu_0}(s,X^j_0,{\bx})+{\bx}_s,{\bx}_s)ds.
  \end{align*}
  Since $b$ is Lipschitz with constant $K$ in its second variable, we deduce that $\sup_{s\le t}|{\mathcal Y}^{i,N}(s,\vec{X}_0,{\bx})-{\mathcal Y}^{\mu_0}(s, X^i_0,{\bx})|$ is smaller than
\begin{align*}
   &K\int_0^t|{\mathcal Y}^{i,N}(s,\vec{X}_0,{\bx})-{\mathcal Y}^{\mu_0}(s, X^i_0,{\bx})|+\frac 1N \sum_{j=1}^N|{\mathcal Y}^{j,N}(s,\vec{X}_0,{\bx})-{\mathcal Y}^{\mu_0}(s,X^j_0,{\bx})|ds\\&+\int_0^t\bigg|\E[b({\mathcal Y}^{\mu_0}(s,X^1_0,{\bx})+{\bx}_s,{\bx}_s)]-\frac 1N \sum_{j=1}^Nb({\mathcal Y}^{\mu_0}(s,X^j_0,{\bx})+{\bx}_s,{\bx}_s)\bigg|ds.
\end{align*}
Hence, by exchangeability,
\begin{align*}
   \E\left[\sup_{s\le t}|{\mathcal Y}^{i,N}(s,\vec{X}_0,{\bx})-{\mathcal Y}^{\mu_0}(s, X^i_0,{\bx})|^\rho\right]&\le 2\times 3^{\rho-1}K^\rho T^{\rho-1}\int_0^t\E[|{\mathcal Y}^{i,N}(s,\vec{X}_0,{\bx})-{\mathcal Y}^{\mu_0}(s, X^i_0,{\bx})|^\rho]ds\\&\hspace{-5cm}+3^{\rho-1}T^{\rho-1}\int_0^t\E\bigg[\bigg|\frac 1N \sum_{j=1}^N\{b({\mathcal Y}^{\mu_0}(s,X^j_0,{\bx})+{\bx}_s,{\bx}_s)-\E[b({\mathcal Y}^{\mu_0}(s,X^1_0,{\bx})+{\bx}_s,{\bx}_s)]\}\bigg|^\rho\bigg]ds.
\end{align*}
The expectation in the second integral in the right-hand side is bounded by $\frac{C\|b\|_\infty^\rho}{N^{\rho/2}}$ since, as  $\rho$ is an even integer, any term in the expansion of the $\rho$-th power of the sum with indices not equal by pairs has zero expectation. We conclude by Gronwall's lemma.
\end{proof}

\section{The problem with individual noise} \label{sec.3}

We now consider the McKean-Vlasov equation with individual and common noises: 
\be\label{eq.barXsigma=0BISCN}
\left\{\begin{array}{l}
\ds X_t =  X_0+ \int_0^t b(X_s, X^0_s) dt + \sigma W_t+ \sigma^0 W^0_t, \qquad t\in [0,T], \\
\ds X^0_t= \E\left[X_t\ |\ W^0\right], \qquad t\in [0,T], \qquad \mathcal L( X_0)= \mu_0. 
\end{array}\right.
\ee
Throughout this section we assume that
\be\label{HypCN}\begin{array}{c}
\mbox{\rm  $\sigma,\sigma^0>0$,  $\mu_0\in \mathcal P_1(\R^d)$ and} \\
\mbox{\rm   $b:[0,T]\times \R^d\times \R^d\to \R^d$ is Borel measurable and bounded. }\\
\mbox{\rm .}
\end{array}
\ee

\begin{definition}[Weak solution] \label{def.weaksolNINCN} A weak solution to \eqref{eq.barXsigma=0BISCN} consists of a tuple $(\Omega, (\mathcal F_t), \F,{\mathbb P}, W, W^0,X^0,X)$, where  $(\Omega, (\mathcal F_t), \F,{\mathbb P})$ is a filtered probability space with a complete filtration supporting $(W, W^0,X^0,X)$, such that  
\begin{itemize}
\item[(i)] Under ${\mathbb P}$, $W$ and $W^0$ are independent $d-$dimensional $(\mathcal F_t)-$Brownian motions,  the processes $X$ and $X^0$ are $(\mathcal F_t)-$adapted with values in $\R^d$, with $X_0$ distributed according to $\mu_0$, 

\item[(ii)] $X_0$, $W$ and $(W^0, X^0)$ are independent, 

\item[(iii)] The state equation holds: 
$$
 X_t = X_0+ \int_0^t b(X_s, X^0_s) ds + \sigma W_t+\sigma^0 W^0_t.
 $$

\item[(iv)]  $W^0$ is adapted to the filtration generated by $X^0$, and  $X^0_t= \E\left[X_t\ |\ X^0\right]$ for any $t\in [0,T]$. 
\end{itemize}
\end{definition}

\subsection{Existence and uniqueness of a solution} \label{subsec3.1}

The main result of this section is the following existence and uniqueness result of the weak solution to \eqref{eq.barXsigma=0BISCN}.

\begin{theorem}\label{thm.existenceCN} Under Assumption \eqref{HypCN}, there exists a weak solution to \eqref{eq.barXsigma=0BISCN}. In addition, this solution is unique in law. 
\end{theorem}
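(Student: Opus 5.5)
Take $\sigma^0=1$. As in Section 2, we decompose $X=X^0+Y$. Conditionally on the whole path $X^0=\bx$, the process $Y$ should solve the conditional McKean--Vlasov equation
\[
Y_t=X_0-\bar\mu_0+\int_0^t b(Y_s+\bx_s,\bx_s)\,ds-\int_0^t \E\bigl[b(Y_s+\bx_s,\bx_s)\bigr]ds+\sigma W_t .
\]
Here the expectation is over $(X_0,W)$ only, and $\bx\in\mathcal C$ is a frozen deterministic path. This is a McKean--Vlasov equation whose drift $y\mapsto b(y+\bx_s,\bx_s)-m_s$ is merely bounded and measurable, and whose interaction term $m_s=\E[b(Y_s+\bx_s,\bx_s)]$ is a deterministic bounded function of time.

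The plan is to solve it by a fixed point on $m\in L^\infty([0,T];\R^d)$ with $\|m\|_\infty\le\|b\|_\infty$. For fixed $m$, the SDE $Y^m_t=X_0-\bar\mu_0+\int_0^t(b(Y^m_s+\bx_s,\bx_s)-m_s)ds+\sigma W_t$ has a law uniquely determined via Girsanov, because the drift is bounded and $\sigma>0$. Moreover $\E[b(Y^m_s+\bx_s,\bx_s)]$ depends only on the law of $Y^m$. Comparing the two Girsanov densities for $m$ and $m'$ gives
\[
d_{TV}\bigl(\mathcal L(Y^m_{[0,t]}),\mathcal L(Y^{m'}_{[0,t]})\bigr)\le C\Bigl(\int_0^t|m_s-m'_s|^2ds\Bigr)^{1/2},
\]
so the map $\Psi(m)_t=\E[b(Y^m_t+\bx_t,\bx_t)]$ satisfies $|\Psi(m)_t-\Psi(m')_t|^2\le C\int_0^t|m_s-m'_s|^2ds$. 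This is the classical argument of Jourdain / Mishura--Veretennikov for bounded measurable drifts. A Picard iteration with weighted norm then gives a unique fixed point $m^{\bx}$. The resulting objects are:
\begin{itemize}
\item a measurable, nonanticipating $\bar b(t,\bx)=\Psi(m^{\bx})_t$, with nonanticipativity since $m^{\bx}_{|[0,t]}$ depends only on $\bx_{|[0,t]}$;
\item a kernel $\bx\mapsto\Pi^{\bx}$ giving the law of $(X_0,W,Y)$.
\end{itemize}
Measurability in $\bx$ comes from obtaining $m^{\bx}$ as a limit of Picard iterates, each measurable in $\bx$.

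For existence, take $\mathbb Q$ carrying $B$ and, independently, $(X_0,W)$. Under $\mathbb Q$ we want $Y$ to be a solution of the frozen equation with $\bx=\bar\mu_0+B$. Since only weak solutions exist there, I would realise it on an enlarged space. Take $(X_0,W^\ast)$ independent of $B$ and define $Y$ by Girsanov conditionally on $B$: change the measure with the density
\[
\exp\Bigl(\int_0^T\sigma^{-1}\bigl(b(\cdot)-m^{\bx}\bigr)\cdot dW^\ast-\tfrac12\int_0^T\sigma^{-2}\bigl|b(\cdot)-m^{\bx}\bigr|^2dt\Bigr),
\]
where $Y=X_0-\bar\mu_0+\sigma W^\ast$. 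This density has conditional expectation $1$ given $B$, so it leaves the law of $B$ unchanged and keeps $B$ independent of $X_0$. Under the new measure, $W=W^\ast-\sigma^{-1}\int(b-m^{\bx})$ is a Brownian motion independent of $(B,X_0)$, and the conditional-on-$B$ structure matches $\Pi^{\bx}$. Next, exactly as in the proof of Theorem \ref{thm.existenceNIN}, tilt further by the density $\exp\{\int\bar b(s,\bar\mu_0+B)\cdot dB_s-\frac12\int|\bar b|^2ds\}$, which is $\sigma(B)$-measurable. Under the resulting measure $\mathbb P$, set $X^0=\bar\mu_0+B$, $X=X^0+Y$ and $W^0=B-\int\bar b\,ds$.

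Then $\E_{\mathbb P}[Y_t|B]=0$, because the fixed point relation gives $\E[b(\cdot)|B]=m^{\bx}$ and $\E[X_0]=\bar\mu_0$. Also $\mathcal F^{X^0}=\mathcal F^B$, and summing the two equations recovers (iii). Condition (ii) needs that $W^0$, $W$ and $X_0$ remain independent under $\mathbb P$. This holds because the second tilt only involves $B$, and under the intermediate measure $(X_0,W)$ is independent of $B$.

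For uniqueness, take any weak solution and set $Y=X-X^0$. The step I expect to be the main obstacle is showing that, conditionally on $X^0=\bx$, the triple $(X_0,W,Y)$ solves the frozen equation with the correct independence, so that uniqueness in law of that equation applies. The difficulty is that $W$ is only independent of $(W^0,X^0)$ and not of the future increments. Condition (ii) is exactly what saves us: $X_0$, $W$ and $X^0$ are independent, so a regular conditional distribution given $X^0=\bx$ keeps $(X_0,W)$ with law $\mu_0\otimes\mathcal W$. Under it, $W$ is a Brownian motion in the filtration generated by $(X_0,W,Y)$, since $Y$ is a functional of $(X_0,W)$ once $\bx$ is frozen. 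Hence $Y$ is a weak solution of the frozen SDE. Its interaction term equals $\E[b(Y_s+X^0_s,X^0_s)|X^0]$ by (iv) combined with $Y_t=X_0-\bar\mu_0+\int(b-\E[b|X^0])ds+\sigma W_t$, which in turn follows from taking conditional expectations of (iii) using $\E[W_t|X^0]=0$. Uniqueness of the fixed point $m^{\bx}$ then forces that conditional law to be $\Pi^{\bx}$ and $\E[b|X^0]=\bar b(t,X^0)$. Consequently $X^0=\bar\mu_0+\int\bar b(s,X^0)ds+W^0$, and Girsanov as in Theorem \ref{thm.existenceNIN} identifies the law of $X^0$. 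Combining it with the kernel $\Pi^{\bx}$ yields the law of $(W,W^0,X^0,X)$, so the solution is unique in law.
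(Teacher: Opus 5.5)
\paragraph{A different route to existence.}
Your existence argument is correct and takes a genuinely different route from the paper's.

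The paper builds the conditional law of $Y=X-X^0$ given $X^0=\bx$ in two lemmas. In Lemma~\ref{lem.mu} it is the unique distributional solution $\mu^{\bx}$ of the nonlinear Fokker--Planck equation \eqref{eq.mu}: $b$ is regularized by Lipschitz $b^n$, and compactness in $C([0,T],\mathcal P_1(\R^d))$ and uniform density bounds come from the mild formulation. The passage to the limit uses $L^p_{loc}$ convergence of $b^n(\cdot+\bx_\cdot,\bx_\cdot)$, which holds only for $\mathcal W^{\bar\mu_0}$-a.e.\ path, and uniqueness follows from an $L^1$ heat-kernel estimate. Lemma~\ref{lem.mathcalYCN} then obtains the joint law $Q$ of $(X^0,Y)$ by tightness, a martingale problem and L\'evy's characterization, and the theorem finally tilts in $X^0$.

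You replace both lemmas by a Picard fixed point on the deterministic interaction term $m$. This exploits that $m$ enters the frozen drift additively, so two candidate laws differ by a Girsanov density with relative entropy $\frac{1}{2\sigma^2}\int_0^t|m_s-m'_s|^2ds$. Your route buys three things:
\begin{itemize}
\item it works for every continuous path, not only almost every one;
\item measurability and nonanticipativity of $\bar b$ come directly from the Picard iterates;
\item the solution is realized by two explicit changes of measure, with no compactness.
\end{itemize}
The PDE route gives extra information (density bounds, stability in $b$) that the theorem itself does not need.

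\paragraph{A gap in uniqueness.}
The uniqueness step has a gap exactly where you expected it, and the justification you give is circular. That $Y$ is a functional of $(X_0,W)$ once $\bx$ is frozen is essentially what must be shown. Strong-uniqueness results for the frozen SDE apply only after you know that, under the conditional law given $X^0=\bx$, $W$ is a Brownian motion for a filtration to which $Y$ is adapted.

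Condition (ii) makes $W$ independent of the whole path $X^0$, but this does not make $W$ a Brownian motion for $\mathcal F_t\vee\sigma(X^0)$, since $\sigma(X^0)$ carries future information. For example, take a fair sign $\xi$ that is $\mathcal F_t$-measurable and independent of $W$, a unit vector $e$, and $G=\xi\,{\rm sign}(e\cdot(W_T-W_t))$. Then $G$ is independent of $W$, yet $\E[\xi\, e\cdot(W_T-W_t)\,|\,G]=G\,\E|e\cdot(W_T-W_t)|\neq0$.

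Here is one way to close the gap.
\begin{enumerate}
\item By (ii)--(iv), $\int_0^t\E[b(X_s,X^0_s)|X^0]ds=X^0_t-\bar\mu_0-W^0_t$ is $\mathcal F^{X^0}_t$-measurable. So this conditional drift has an $\mathcal F^{X^0}$-progressive version $\alpha(s,X^0)$.
\item Define $\tilde\P$ by $d\tilde\P/d\P=\exp\{-\int_0^T\alpha(s,X^0)\cdot dW^0_s-\frac12\int_0^T|\alpha(s,X^0)|^2ds\}$. This density is $\sigma(X^0)$-measurable, so it leaves conditional laws given $X^0$ unchanged.
\item Under $\tilde\P$, the pair $(W,X^0-\bar\mu_0)$ is a $2d$-dimensional $(\mathcal F_t)$-Brownian motion. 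Since its components are independent, $W_{t+\cdot}-W_t$ is independent of $\mathcal F_t\vee\sigma(X^0)$, so $W$ is a Brownian motion in that enlarged filtration.
\item Conditioning on $X^0=\bx$ now genuinely produces a weak solution of the frozen SDE with $m=\alpha(\cdot,\bx)$. Your fixed-point relation then forces $\alpha(\cdot,\bx)=m^{\bx}$ for a.e.\ $\bx$, and the rest of your argument goes through.
\end{enumerate}
The paper's uniqueness proof asserts the corresponding fact (that the conditional law of $Y_t$ given $X^0$ solves \eqref{eq.mu}) directly from (ii) and \eqref{qskjhdkf}. The same repair is useful there.
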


To simplify the notation, we assume throughout the rest of the section  that $\sigma=\sigma^0=1$, the proof for general $\sigma,\sigma_0>0$ being the same.  \\

Let us first start by building the conditional law of $X_t-X^0_t$ given $X^0$. For $R>0$, $t>s>0$ and $p\geq 1$, we denote by $B_R$ the ball of $\R^d$ centered at $0$ and of radius $R$ and by $L^p([s,t]\times B_R)$ the Lebesgue space of $p$ integrable maps on $(s,t)\times B_R$ for the Lebesgue measure.  

\begin{lemma}\label{lem.mu} There exists a Borel measurable map $\mu:[0,T]\times {\mathcal C}\to \mathcal P_2(\R^d)$ such that, for ${\mathcal W}^{\bar\mu_0}-$a.e. ${\bx}\in {\mathcal C}$, $\mu^{{\bx}} \in C([0,T], \mathcal P_1(\R^d))$ is the unique solution in the sense of distributions to 
\be\label{eq.mu}
\partial_t \mu^{{\bx}}_t - \frac{1}{2} \Delta \mu^{{\bx}}_t +{\rm div}\Bigl(\mu^{{\bx}}_t \Bigl( b(y+{\bx}_t, \bx_t)-\int_{\R^d} b(z+{\bx}_t,\bx_t)\mu^{\bx}_t(dz)\Bigr)\Bigr) =0
\ee
with initial condition $\mu^{\bx}_0(dy)= (Id- \bar\mu_0)\sharp \mu_0$.  In addition, the map $\bx \to \mu^{\bx}$ is nonanticipating: 
$$
\mbox{\rm for ${\mathcal W}^{\bar\mu_0}-$a.e. $\bx, \bx' \in \mathcal C$, $\bx=\bx'$ on $[0,t]$ implies $\mu^{\bx}_t=\mu^{\bx'}_t$ on $[0,t]$.}
$$
 Moreover this solution is stable with respect to $b$: namely, if $(b^n)$ is a sequence of Lipschitz maps from $\R^d\times \R^d$ into $\R^d$ such that 
\be\label{defbn}
\sup_n \|b^n\|_\infty < \infty,\qquad b^n(x,y)\to b(x,y) \;\mbox{\rm for $dx\otimes dy$ a.e. $(x,y)\in \R^d\times \R^d$},
\ee
then for ${\mathcal W}^{\bar\mu_0}-$a.e. ${\bx}\in {\mathcal C}$, the solution  $\mu^{n,\bx}\in C([0,T], \mathcal P_1(\R^d))$ in the sense of distributions to 
\be\label{eq.mun}
\partial_t \mu^{n,{\bx}}_t - \frac{1}{2} \Delta \mu^{n,{\bx}}_t +{\rm div}\Bigl(\mu^{n,{\bx}}_t \Bigl( b^n(y+{\bx}_t, \bx_t)-\int_{\R^d} b^n(z+{\bx}_t, \bx_t)\mu^{n,\bx}_t(dz)\Bigr)\Bigr) =0
\ee
with initial condition $\mu^{n,\bx}_0( dy)= (Id- \bar\mu_0)\sharp \mu_0$, is unique,  has a density which is  locally bounded and continuous on $(0,T]\times \R^d$, and converges to $\mu^{\bx}$ in $C([0,T], \mathcal P_1(\R^d))$, and weakly in $L^p([\ep,T]\times B_R)$ for any $\ep,R>0$ and any $p\in (1, \infty)$. Finally, for any $n$, the map $\bx \to \mu^{n,\bx}$ is continuous from $\mathcal C$ to $C([0,T], \mathcal P_1(\R^d))$.
\end{lemma}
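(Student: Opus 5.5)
The plan is to work path by path. Fix $\bx\in\mathcal C$ and observe that the nonlinearity in \eqref{eq.mu} enters only through the $\R^d$-valued function of time $c_t=\int b(z+\bx_t,\bx_t)\mu^{\bx}_t(dz)$. This suggests a fixed point argument on $c$.

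\textbf{Step 1: the linear problem and the fixed point.} For a given bounded measurable $c:[0,T]\to\R^d$, I would consider the SDE
$$dY_t=\bigl(b(Y_t+\bx_t,\bx_t)-c_t\bigr)dt+dB_t,\qquad Y_0\sim (Id-\bar\mu_0)\sharp\mu_0 .$$
Its drift is bounded, so Girsanov's theorem gives existence and uniqueness in law, together with the explicit density of the law $\mathbb P^c$ of $Y$ with respect to the law of $Y_0+B$. Define
$$\Phi(c)_t=\E\bigl[b(Y^c_t+\bx_t,\bx_t)\bigr].$$
The relative entropy of $\mathbb P^{c}$ with respect to $\mathbb P^{c'}$ on $[0,t]$ equals $\frac12\int_0^t|c_s-c'_s|^2ds$. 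By Pinsker's inequality,
$$|\Phi(c)_t-\Phi(c')_t|\le 2\|b\|_\infty\, d_{TV}\le C\Bigl(\int_0^t|c_s-c'_s|^2ds\Bigr)^{1/2}.$$
Iterating this bound, or using an exponentially weighted $L^2$ norm, makes $\Phi$ a contraction, which gives a unique fixed point $c^{\bx}$. I then set $\mu^{\bx}_t=\mathcal L(Y^{c^{\bx}}_t)$. By Itô's formula this measure solves \eqref{eq.mu}, and it lies in $C([0,T],\mathcal P_1)$ because $b$ is bounded and $\mu_0\in\mathcal P_1$.

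\textbf{Step 2: uniqueness among distributional solutions.} Let $\nu\in C([0,T],\mathcal P_1)$ be any distributional solution of \eqref{eq.mu}. I would freeze $c_t=\int b(z+\bx_t,\bx_t)\nu_t(dz)$, which turns the equation into a linear Fokker--Planck equation with bounded drift. By the superposition principle (Figalli, Trevisan), $\nu$ is then the marginal flow of a solution of the martingale problem for that drift. That martingale problem is well posed by Girsanov, so $\nu_t=\mathcal L(Y^c_t)$, hence $c=\Phi(c)$, and therefore $c=c^{\bx}$ and $\nu=\mu^{\bx}$.

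Nonanticipativity is immediate from the construction, since $c^{\bx}$ restricted to $[0,t]$ only involves $\bx$ restricted to $[0,t]$. For Lipschitz $b^n$ the same construction applies; in addition, $\bx\mapsto\mu^{n,\bx}$ is continuous. To see this, couple $Y^{n,\bx}$ and $Y^{n,\bx'}$ with the same Brownian motion and apply Gronwall's lemma to both $Y$ and $c^n$, using $|c^{n,\bx}_t-c^{n,\bx'}_t|\le K\,\E|Y_t-Y'_t|+K|\bx_t-\bx'_t|$.

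\textbf{Step 3: stability, which I expect to be the main obstacle.} Here the difficulty is the passage to the limit in $b^n(y+\bx_t,\bx_t)\mu^{n,\bx}_t(dy)$ when $b$ is only measurable. The first point is that, by Fubini's theorem and the fact that $\bx_t$ has a density under $\mathcal W^{\bar\mu_0}$ for $t>0$, for $\mathcal W^{\bar\mu_0}$-a.e.\ $\bx$ we have $b^n(y+\bx_t,\bx_t)\to b(y+\bx_t,\bx_t)$ for $dt\otimes dy$-a.e.\ $(t,y)$; this is why the statement holds only for a.e.\ $\bx$.

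Next I need uniform bounds. Writing the law of $Y^n$ through its Girsanov density and applying Hölder's inequality, the densities of $\mu^{n,\bx}_t$ are bounded in $L^p([\ep,T]\times B_R)$ uniformly in $n$. Continuity and local boundedness of the density for fixed $n$ follow from standard parabolic regularity, since the drift is Lipschitz. Tightness in $C([0,T],\mathcal P_1)$ comes from the bounded drift and $\mu_0\in\mathcal P_1$.

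Now take a subsequence along which $\mu^{n,\bx}$ converges in $C([0,T],\mathcal P_1)$ and the densities converge weakly in $L^p$. On $[\ep,T]\times B_R$ the functions $b^n$ converge strongly in $L^{p'}_{loc}$ by dominated convergence, so the products $b^n\mu^n$ converge. The contributions of $[0,\ep]$ and of $B_R^c$ are controlled uniformly in $n$ by the bound on $\|b^n\|_\infty$ and by tightness. This yields $c^{n}\to c$ and shows that every limit point solves \eqref{eq.mu}, so Step 2 identifies the limit as $\mu^{\bx}$ and the whole sequence converges.

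\textbf{Measurability.} Finally, I would fix one concrete Lipschitz approximating sequence, for instance mollifications of $b$. Then $\bx\mapsto\mu^{\bx}$ is the $\mathcal W^{\bar\mu_0}$-a.e.\ limit of the continuous maps $\bx\mapsto\mu^{n,\bx}$, hence Borel measurable after modification on a null set. The fact that $\mu^{\bx}_t\in\mathcal P_2$ follows from the bounded drift once a second moment of $\mu_0$ is available; otherwise one reads $\mathcal P_1$.
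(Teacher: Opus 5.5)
Your proposal is correct, but it reaches the result by a genuinely different route from the paper's.

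\textbf{The paper's route.} The paper never solves \eqref{eq.mu} directly. It obtains $\mu^{\bx}$ as a limit point of the Lipschitz approximations $\mu^{n,\bx}$, using tightness from the McKean--Vlasov SDE representation and a heat-kernel bootstrap on the mild formulation \eqref{eq:mild}, which gives $\sup_n\sup_t t^{d/2}\|m^n(t,\cdot)\|_\infty<\infty$. Existence and stability therefore come out of a single compactness argument. Uniqueness is proved by writing the difference $\nu$ of two solutions in mild form and closing $\|\nu(t,\cdot)\|_{L^1}\le C\|b\|_\infty\int_0^t(t-s)^{-1/2}\|\nu(s,\cdot)\|_{L^1}ds$ by iteration and Gronwall.

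\textbf{Your route.} You reduce the nonlinearity to the path $c_t$ and get existence from a Girsanov--Pinsker contraction. You then identify arbitrary distributional solutions through the superposition principle and weak well-posedness of the frozen SDE.

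\textbf{What your route buys.} Existence, uniqueness and nonanticipativity hold for every $\bx\in\mathcal C$; the a.e.\ restriction enters only through stability and measurability. Your Fubini argument also gives $b^n(y+\bx_t,\bx_t)\to b(y+\bx_t,\bx_t)$ for $dt\otimes dy$-a.e.\ $(t,y)$ along the whole sequence. You therefore do not need the diagonal extraction of a subsequence of $(b^n)$ that the paper uses.

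\textbf{What the paper's route buys.} It is more self-contained, relying on heat-kernel estimates instead of the Figalli--Trevisan superposition theorem. In fact, you could replace your superposition step by the paper's mild $L^1$ estimate. It also yields a uniform $L^\infty$ bound on the densities. Your Girsanov--H\"older bound only gives uniform $L^p$ bounds for $p<\infty$. That is enough for the weak $L^p$ convergence, but it leaves local boundedness and continuity of the density for fixed $n$ to standard parabolic theory, which is acceptable.

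Your remark that $\mathcal P_2$ in the statement should read $\mathcal P_1$, unless $\mu_0$ has a finite second moment, is also correct.
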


\begin{proof} 

  We use the approximation by $b^n$ to build a solution to \eqref{eq.mu} and then show that \eqref{eq.mu} has a unique solution. 
  We first note that, for any $R>0$ and $p\in[1,+\infty)$, we have 
\begin{align*}
&\int_{{\mathcal C}} \int_{B_R} \int_0^T \left| b^n(x +{\bx}_t,{\bx}_t)-b(x +{\bx}_t,{\bx}_t)\right|^p dt dx {\mathcal W}^{\bar \mu_0}(d{\bx}) \\
& \qquad= \int_{\R^d} \int_{B_R}\int_0^T \Gamma_t(z-\bar\mu_0) \left| b^n(x +z,z)-b(x +z,z)\right|^p dt dx dz,
\end{align*}
where $\Gamma$ is the heat kernel. The last expression tends to $0$ by dominated convergence. Using a diagonal argument, we can find a subsequence, again denoted by $b^n$ and a subset $\tilde{\mathcal C}$ of ${\mathcal C}$ with ${\mathcal W}^{\bar\mu_0}(\tilde{\mathcal C})=1$, such that, for any ${\bx}\in \tilde{\mathcal C}$, $(b^n(\cdot+{\bx}(\cdot),{\bx}(\cdot)))$ converges to $b(\cdot+{\bx}(\cdot),{\bx}(\cdot))$ in $L^p_{loc}([0,T]\times \R^d)$ for any $p\in[1,+\infty)$. 

Let us fix such a ${\bx}\in \tilde{\mathcal C}$. As the $b^n$ are  Lipschitz and bounded, there exists a unique solution $\mu^{n,\bx}$ to \eqref{eq.mun} in the sense of distributions (see \cite{Szn}). Our aim is to show that the family $(\mu^{n,\bx})$ is compact in $C([0,T], \mathcal P_1(\R^d))$ and that any cluster point $\mu^{\bx}$ is a weak solution to \eqref{eq.mu}. Then we will check that the weak solution to \eqref{eq.mu} is unique, so that the whole sequence $(\mu^{n,\bx})$ converges to $\mu^{\bx}$. This in turn implies that the map $\bx \to \mu^{\bx}$ is measurable from $\mathcal C$ to $C([0,T], \mathcal P_1(\R^d))$ and nonanticipating, once this has been checked for the $\mu^{n,\bx}$.

We now check that the family $(\mu^{n,\bx})$  is compact in $C([0,T], \mathcal P_1(\R^d))$. Still following  \cite{Szn}, we know that, for $t\in[0,T]$, $\mu^{n,\bx}_t$ is the law of $Z_t$, where $(Z_t)_{t\in[0,T]}$ is the solution to the McKean-Vlasov SDE
\be\label{leakzjrsndtfgv}
Z_t= Z_0+\int_0^t (b^n(Z_s+\bx_s, \bx_s) - \E\left[b^n(Z_s+\bx_s,\bx_s)\right])ds +W_t,
\ee
where $Z_0$ has law $(Id- \bar\mu_0)\sharp \mu_0$ and is independent of the $d$-dimensional Brownian motion $W$. The map $\bx \to \mu^{n, \bx}$ is nonanticipating by uniqueness of the solution and continuous from $\mathcal C$ to $C([0,T], \mathcal P_1(\R^d))$ by classical stability of the solution of a McKean-Vlasov equation with Lipschitz coefficients. Moreover,
   $$\sup_{t\in[0,T]}|Z_t|\le |Z_0|+\sup_{t\in[0,T]}|W_t|+2\sup_n\|b^n\|_\infty T.$$ As a consequence, for $R>6\sup_n\|b^n\|_\infty T$, ${\bf 1}_{\{\sup_{t\in[0,T]}|Z_t|\ge R\}}\le {\bf 1}_{\{|Z_0|\ge R/3\}}+{\bf 1}_{\{\sup_{t\in[0,T]}|W_t|\ge R/3\}}$ and
   \begin{align*}
      \E\bigg[\sup_{t\in[0,T]}|Z_t|{\bf 1}_{\{\sup_{t\in[0,T]}|Z_t|\ge R\}}\bigg]\le& \E\bigg[|Z_0|{\bf 1}_{\{|Z_0|\ge R/3\}}\bigg]+\E\bigg[\sup_{t\in[0,T]}|W_t|{\bf 1}_{\{\sup_{t\in[0,T]}|W_t|\ge R/3\}}\bigg]\\&+\Big(\E\Big[\sup_{t\in[0,T]}|W_t|\Big]+2\sup_n\|b^n\|_\infty T\Big){\mathbb P}(|Z_0|\ge R/3)\\&+\Big(\E[|Z_0|]+2\sup_n\|b^n\|_\infty T\Big){\mathbb P}\bigg(\sup_{t\in[0,T]}|W_t|\ge R/3\bigg),
   \end{align*}
where the right-hand side does not depend on $\bx$ and goes to $0$ as $R\to\infty$. We deduce that 
 $$
 \lim_{R\to\infty} \sup_{\bx \in \mathcal C}\sup_n \sup_{t\in [0,T]} \int_{|x|\ge R}|x|\mu^{n,\bx}_t(dx) = 0. 
 $$
Thus the $\mu^{n,\bx}_t$ takes values in a compact subset of $\mathcal P_1(\R^d)$.  Moreover, for $0\le s\le t\le T$, \begin{align*}
   W_1(\mu^{n,\bx}_t,\mu^{n,\bx}_s)\le \E[|Z_t-Z_s|]&\le 2\sup_n\|b^n\|_\infty(t-s)+\E[|W_t-W_s|]\\&=2\sup_n\|b^n\|_\infty(t-s)+\frac{\Gamma((d+1)/2)}{\Gamma(d/2)}\sqrt{2(t-s)},
\end{align*}
so that the mappings $[0,T]\ni t\mapsto\mu^{n,\bx}_t\in{\mathcal P}_1(\R^d)$ are uniformly continuous. By the Ascoli-Arzela theorem, we deduce that $(\mu^{n,\bx})$ is compact in $C([0,T], \mathcal P_1(\R^d))$. In order to be able to take the limit $n\to\infty$ in the nonlinear term in \eqref{eq.mun}, we remark that, in addition, for $t>0$, $\mu^{n,\bx}_t$ has a density $m^{n}(t,x)$ with respect to the Lebesgue measure which satisfies $dx$-a.e. the equality
 \begin{align}
m^n(t,x) & = \Gamma_t\ast \mu_0(x+\bar \mu_0)- \int_0^t D\Gamma_{t-s}\ast \Bigl(m^n(s,\cdot) \times \notag\\
& \qquad \qquad \qquad  \Bigl( b^n(\cdot+\bx_s, \bx_s)- \int_{\R^d} b^n(z+\bx_s, \bx_s)m^n(s,z)dz \Bigr)\Bigr)(x)ds ,\label{eq:mild}
\end{align}
where $\Gamma$ is the heat kernel. This can be proved by using Itô's formula to compute $\E[\phi(t,Z_t)]$ where $\phi(s,x)=\Gamma_{t-s}*\psi(x)$ for some function $\psi:\R^d\to\R$ twice continuously differentiable and bounded together with its first and second order derivatives. Denoting by $\|\cdot\|_{L^p}$ the $L^p$ norm with respect to the Lebesgue measure on $\R^d$, we have $\|\Gamma_t*\mu_0(\cdot-\bar\mu_0)\|_{L^p}\le \|\Gamma_t\|_{L^p}\le Ct^{-\frac{d(p-1)}{2p}}$ and $\|D\Gamma_{t}\|_{L^p}\le Ct^{-\frac 12-\frac{d(p-1)}{2p}}$ so that, by Young's inequality, for $p\in[1,2d]$,$$\left\|D\Gamma_{t-s}*m^n(s,\cdot) \Bigl( b^n(\cdot+\bx_s, \bx_s)- \int_{\R^d} b^n(z+\bx_s, \bx_s)m^n(s,z)dz \Bigr)\right\|_{L^{\frac{2dp}{2d-p}}}\le C(t-s)^{-\frac 34}\|m^n(s,\cdot)\|_{L^p},$$
where the constant $C$ does not depend on $n$ since the $b^n$ are uniformly bounded. With \eqref{eq:mild}, we deduce by induction on $k$ that for $k\in\{0,\cdots,2d\}$, $\sup_{n}\sup_{t\in (0,T]}t^{k/4}\|m^n(t,\cdot)\|_{L^{\frac{2d}{2d-k}}}<\infty$. In particular for $k=2d$, $\sup_{n}\sup_{t\in (0,T]}t^{d/2}\|m^n(t,\cdot)\|_{\infty}<\infty$.

We infer that, up to a subsequence again denoted in the same way, $(\mu^{n,\bx}_t)_{t\in[0,T]}$ converges to some $(\tilde \mu_t)_{t\in[0,T]}$ in $C([0,T], \mathcal P_1(\R^d))$ and $(m^{n}(\cdot, \cdot))$ converges weakly to $(\tilde m(\cdot, \cdot))$ in $L^p([\ep,T]\times B_R)$ for any $R,\ep>0$ and $p\in[1,+\infty)$. Moreover, $dt$-a.e., $\tilde m(t,\cdot)$ is a density of $\tilde \mu_t$ with respect to the Lebesgue measure. As $m^{n}$ solves  \eqref{eq.mun}, it satisfies, for any smooth test function $\phi=\phi(t,y)$ with a compact support in $[0,T)\times \R^d$, 
\begin{align}\label{ksfdhjlgkn}
 & \int_{\R^d} \phi(0,y-\bar\mu_0)\mu_0(dy) +\int_0^T\int_{\R^d} \Bigl(\partial_t\phi(t,x)+\frac12\Delta_y \phi(t,y) \\
 &+ D_y\phi(t,y)\cdot \Bigl(b^n(y+{\bx}_t,{\bx}_t) -\int_{\R^d} b^n(z+{\bx}_t,{\bx}_t)m^{n}(t,z) dz \Bigr)\Bigr) m^{n}(t,y)dydt =0 \notag.
 \end{align}
By the weak convergence of the $m^n$ (say, in $L^2_{loc}([0,T]\times \R^d)$ and the strong convergence (again in $L^2_{loc}$) of $b^n(\cdot+ \bx(\cdot), \cdot)$, we have, for any $\ep>0$ small,  
\begin{align*}
&\lim_{n\to\infty}\int_\ep^T\int_{\R^d} \Bigl(\partial_t\phi(t,x)+ \frac12\Delta_y \phi(t,y) + D_y\phi(t,y)\cdot b^n(y+{\bx}_t,{\bx}_t)\Bigr) m^{n}(t,z) dz dt\\
& \qquad =\int_\ep^T\int_{\R^d} \Bigl(\partial_t\phi(t,x)+ \frac12\Delta_y \phi(t,y) + D_y\phi(t,y)\cdot b(y+{\bx}_t,{\bx}_t) \Bigr) \tilde m(t,z) dzdt.
 \end{align*}
 On the other hand, for any $R>0$ large, 
 \begin{align*}
& \Bigl|  \int_\ep^T\int_{\R^d}  D_y\phi(t,y)\cdot (\int_{B_R} b^n(z+{\bx}_t,{\bx}_t)m^{n}(t,z) dz)  m^{n}(t,y)dydt \\
& \qquad - \int_\ep^T\int_{\R^d}  D_y\phi(t,y)\cdot (\int_{B_R} b^n(z+{\bx}_t,{\bx}_t)m^{n}(t,z) dz)  \tilde m (t,y)dydt\Bigr|\\
& \leq   \int_\ep^T\Bigl| \int_{B_R} b^n(z+{\bx}_t,{\bx}_t)m^{n}(t,z) dz\Bigr|  \Bigl| \int_{\R^d}  D_y\phi(t,y) (m^{n}(t,y)-\tilde m (t,y))dy \Bigr| dt \\
& \leq \|b^n\|_\infty   \|D^2_{yy}\phi\|_\infty \int_\ep^T W_1(\mu^{n,\bx}_t,\tilde \mu_t)dt . 
\end{align*}
Thus 
\begin{align*}
&\lim_{n\to\infty}\bigg|\int_\ep^T\int_{\R^d}  D_y\phi(t,y)\cdot \left(\int_{B_R} b^n(z+{\bx}_t,{\bx}_t)m^{n}(t,z) dz\right)  m^{n}(t,y)dydt \\
&\qquad -\int_\ep^T\int_{B_R} b^n(z+{\bx}_t,{\bx}_t)   \cdot \Bigl( \int_{\R^d}  D_y\phi(t,y) \tilde m(t,y)dy\Bigr) m^n(t,z) dz)  dt\bigg|=0 ,
\end{align*}
where 
$$
(t,z)\to b^n(z+{\bx}_t,{\bx}_t)   \cdot \Bigl( \int_{\R^d}  D_y\phi(t,y) \tilde m(t,y)dy\Bigr)
$$
strongly converges  in $L^2([\ep, T]\times B_R)$  to 
$$
(t,z)\to b(z+{\bx}_t,{\bx}_t)   \cdot \Bigl( \int_{\R^d}  D_y\phi(t,y) \tilde m(t,y)dy\Bigr), 
$$
while $m^n$ weakly converges to $\tilde m$ in $L^2([\ep, T]\times B_R)$. This shows that 
\begin{align*}
& \lim_{n\to\infty} \int_\ep^T\int_{\R^d}  D_y\phi(t,y)\cdot \left(\int_{B_R} b^n(z+{\bx}_t,{\bx}_t)m^{n}(t,z) dz\right)  m^{n}(t,y)dydt\\
& \qquad = \int_\ep^T\int_{\R^d}  D_y\phi(t,y)\cdot \left(\int_{B_R} b(z+{\bx}_t,{\bx}_t)\tilde m(t,z) dz\right) \tilde m(t,y)dydt. 
\end{align*}
The small time contribution
\begin{align*}
 & \int_0^\ep\int_{\R^d} \Bigl( \partial_t\phi(t,y) +\frac12\Delta_y \phi(t,y) + D_y\phi(t,y)\cdot \Bigl(b^n(y+{\bx}_t,{\bx}_t) -\int_{\R^d} b^n(z+{\bx}_t,{\bx}_t)m^{n}(t,z) dz \Bigr)\Bigr) m^{n}(t,y)dydt
 \end{align*}
 is bounded by $C\ep$ thanks to the bound on $\|b^n\|_\infty$, while the term 
 \begin{align*}
 & \int_0^T\int_{\R^d} D_y\phi(t,y)\cdot \Bigl(\int_{\R^d\backslash B_R} b^n(z+{\bx}_t,{\bx}_t)m^{n}(t,z) dz \Bigr)m^{n}(t,y)dydt 
 \end{align*}
 is bounded by $CR^{-1}$ thanks to the uniform control on the first moment of the $\mu^{n,\bx}_t$ and Markov's inequality. So we can pass to the limit in \eqref{ksfdhjlgkn} to obtain 
 \begin{align*}
 & \int_{\R^d} \phi(0,y)\mu_0(dy) +\int_0^T\int_{\R^d} \Bigl(\partial_t\phi(t,y) + \frac12\Delta_y \phi(t,y) \\
 &\quad \quad\quad\quad \quad\quad\quad\quad\quad + D_y\phi(t,y)\cdot \Bigl(b(y+{\bx}_t,{\bx}_t) -\int_{\R^d} b(z+{\bx}_t,{\bx}_t)\tilde m(t,z) dz \Bigr)\Bigr)\tilde m(t,y)dydt =0 .
 \end{align*}
 Therefore $\tilde m$ is a weak solution to the equation \eqref{eq.mu}.  
 
 We now check the uniqueness of the solution to \eqref{eq.mu}, which will prove the convergence of the whole sequence $(\mu^{n,\bx})$. Let $\mu^{\bx}, \tilde \mu^{\bx}$ be two solutions to \eqref{eq.mu} for a fixed $\bx\in \tilde{\mathcal C}$. The weak formulation extends to smooth test functions $\phi$ that are bounded together with their first and second order derivatives. For $t\in (0,T]$, by choosing $\phi(s,x)=\Gamma_{t-s}*\psi(x)$ where $\psi:\R^d\to\R$ is smooth and bounded together with its first and second order derivatives, we obtain that  $\mu^{\bx}
_t$ and $\tilde \mu^{\bx}_t$ admit densities $m(t,x)$ and $\tilde m(t,x)$ with respect to the Lebesgue measure which solve mild equations like \eqref{eq:mild} with $b^n$ replaced by $b$. Setting $\nu= m-\tilde m$, we deduce that
 \begin{align*}
 \nu(t,x)& = - \int_0^t D\Gamma_{t-s}\ast \Bigl(\nu(s,\cdot) \Bigl(b(\cdot+\bx_s, \bx_s)- \int_{\R^d} b(z+\bx_s, \bx_s)m(s,z)dz)\Bigr) \\
 & \qquad\qquad\qquad\qquad - \tilde m(s,\cdot)\int_{\R^d} b(z+\bx_s, \bx_s)\nu(s,z)dz\Bigr)(x)ds ,
 \end{align*}
 where $\Gamma$ is the heat kernel.
 Thus, we have
 \begin{align*}
\|\nu(t,\cdot)\|_{L^1} & \leq \int_0^t \frac{C}{(t-s)^{1/2}} \Bigl\| \nu(s,\cdot) \Bigl( b(\cdot+\bx_s, \bx_s) - \int_{\R^d} b(z+\bx_s, \bx_s)m(s,z)dz)\Bigr) \\
& \qquad\qquad\qquad\qquad\qquad - \tilde m(s,\cdot)\int_{\R^d} b(z+\bx_s, \bx_s)\nu(s,z)dz\Bigr\|_{L^1}ds  \\
 & \leq \int_0^t \frac{C}{(t-s)^{1/2}}  \|b\|_\infty \|\nu(s,\cdot)\|_{L^1} ds.
 \end{align*}
Iterating this inequality, we get $\|\nu(t,\cdot)\|_{L^1}\le C^2\|b\|_\infty^2\pi\int_0^t \|\nu(s,\cdot)\|_{L^1} ds$. By Gronwall's lemma,  we  derive that $\nu(t,\cdot)=0$ for any $t\in (0,T]$. 
 \end{proof}

 We set, for 
${\bx}\in {\mathcal C}$,  
\be\label{defbarbCN}
\bar b(t,z,{\bx}) = \int_{\R^d} b(y+z, z) \mu^{\bx}_t(dy),\quad(t,z)\in[0,T]\times\R^d
\ee
where $\mu^{\bx}$ is given by Lemma \ref{lem.mu}. Note that since $\mu^{\bx}$ is nonanticipating according to this Lemma, so is $\bar b$ : $\bar b(t,\cdot,{\bx})$ depends on $\bx$ only through its restriction to $[0,t]$.
Let ${\mathcal C}^2$ be endowed with the topology of uniform convergence and with the associated Borel $\sigma-$algebra $\bar{\mathcal F}$.  We denote by $({\bx}^0_t,\by_t)_{t\in[0,T]}$ the canonical process on ${\mathcal C}^2$.
\begin{lemma}\label{lem.mathcalYCN} There exists a unique probability measure $Q(d\bx^0,d\by) = Q^{\bx^0}(d\by) {\mathcal W}^{\bar\mu_0}(d\bx^0)$ on $({\mathcal C}^2, \bar{\mathcal F})$ (with $Q^{\bx^0}$ denoting the conditional law of $\by$ given $\bx^0$ under $Q$),  such that 
\begin{itemize}
\item[(i)] under $Q(d\bx^0,d\by) 
$, the process
$$
\Bigl( {\bx}^0_t\; , \; \by_t-\by_0 - \int_0^t  \left(b(\by_s+{\bx}^0_s,{\bx}^0_s)ds - \bar b(s,\bx^0_s,\bx^0)\right)ds \Bigr)_{t\in[0,T]}, 
$$
 is a $2d-$dimensional Brownian motion starting from $(\bar\mu_0,0)$ for the canonical filtration generated by $(\bx^0,\by)$, and $\by_0$ is distributed according to $(Id-\bar \mu_0)\sharp \mu_0$, 
\item[(ii)] ${\mathcal W}^{\bar \mu_0}(d\bx^0)\otimes dt$-a.e., 
  $\by_t\sharp Q^{\bx^0}= \mu^{\bx^0}_t$. 
\end{itemize}
\end{lemma}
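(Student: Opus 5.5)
The plan is to build $Q$ explicitly through a Girsanov transform on the product space, in the spirit of the proof of Theorem \ref{thm.existenceNIN}. Set
$$\beta(t,\bx^0,\by)=b(\by_t+\bx^0_t,\bx^0_t)-\bar b(t,\bx^0_t,\bx^0).$$
This map is bounded by $2\|b\|_\infty$. It is also progressively measurable on ${\mathcal C}^2$, because $\bar b$ is nonanticipating by Lemma \ref{lem.mu}. Let $R$ be the law on ${\mathcal C}^2$ of $(\bar\mu_0+B^0,\,Y_0+B)$, where $B^0,B$ are independent Brownian motions, independent of $Y_0\sim(Id-\bar\mu_0)\sharp\mu_0$. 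Define
$$\frac{dQ}{dR}=\exp\Bigl\{\int_0^T\beta(s,\bx^0,\by)\cdot d\by_s-\frac12\int_0^T|\beta(s,\bx^0,\by)|^2ds\Bigr\}.$$
By boundedness of $\beta$ (Novikov) this is a probability measure.

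By the $2d$-dimensional Girsanov theorem, the drift is added only in the $\by$-direction. Hence under $Q$ the pair $(\bx^0,\ \by-\by_0-\int_0^\cdot\beta\,ds)$ is a $2d$-dimensional Brownian motion for the canonical filtration. The density equals $1$ at time $0$, so $\by_0$ keeps the law $(Id-\bar\mu_0)\sharp\mu_0$. This proves (i). Since the $\bx^0$-marginal of $Q$ is ${\mathcal W}^{\bar\mu_0}$, $Q$ disintegrates as $Q^{\bx^0}(d\by){\mathcal W}^{\bar\mu_0}(d\bx^0)$. Here $Q^{\bx^0}$ is given explicitly: it is the law of $Y_0+B$ reweighted by the same exponential with $\bx^0$ frozen, which is jointly measurable in $\bx^0$. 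Thus, for fixed $\bx^0$, $Q^{\bx^0}$ is the (unique in law) weak solution of the linear SDE $dY_t=\beta(t,\bx^0,Y)dt+dB_t$, whose drift depends on $Y$ only through $Y_t$.

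For (ii), fix $\bx^0$ in the full-measure set $\tilde{\mathcal C}$ of Lemma \ref{lem.mu}. Let $\nu_t=\by_t\sharp Q^{\bx^0}$. By It\^o's formula, $\nu$ is a distributional solution in $C([0,T],\mathcal P_1(\R^d))$ of the \emph{linear} Fokker--Planck equation
$$\partial_t\nu-\tfrac12\Delta\nu+{\rm div}\bigl(\nu\,(b(y+\bx^0_t,\bx^0_t)-\bar b(t,\bx^0_t,\bx^0))\bigr)=0,\qquad \nu_0=(Id-\bar\mu_0)\sharp\mu_0.$$
By the definition \eqref{defbarbCN} of $\bar b$, the measure $\mu^{\bx^0}$ solves the very same linear equation, since its nonlinear term is exactly $\bar b(t,\bx^0_t,\bx^0)$. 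Uniqueness for this linear equation with a bounded measurable drift follows as in the uniqueness step of Lemma \ref{lem.mu}, and is in fact simpler because there is no nonlinear term. Testing against $\Gamma_{t-s}*\psi$ gives mild formulations for the densities. The difference $\nu-\mu^{\bx^0}$ then satisfies
$$\|\nu_t-\mu^{\bx^0}_t\|_{L^1}\le C\|b\|_\infty\int_0^t(t-s)^{-1/2}\|\nu_s-\mu^{\bx^0}_s\|_{L^1}\,ds,$$
and iterating once and applying Gronwall's lemma gives $\nu_t=\mu^{\bx^0}_t$ for all $t$.

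For uniqueness of $Q$, let $Q'$ satisfy (i) and (ii). Under (i), $\by_0$ is ${\mathcal F}_0$-measurable, so it is independent of the $2d$-dimensional Brownian motion; also $\bx^0_0=\bar\mu_0$ is deterministic. Hence the law under $Q'$ of the triple ($\by_0$, the driving Brownian motion of $\by$, $\bx^0$) is the product $(Id-\bar\mu_0)\sharp\mu_0\otimes{\mathcal W}\otimes{\mathcal W}^{\bar\mu_0}$. Removing the bounded drift $\beta$ by the inverse Girsanov density (as in the uniqueness part of Theorem \ref{thm.existenceNIN}) shows that $Q'$ is the image of $R$ under the same exponential density. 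Therefore $Q'=Q$, and condition (ii) is then automatically satisfied.

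I expect the main obstacle to be technical: handling the exceptional ${\mathcal W}^{\bar\mu_0}$-null set on which $\mu^{\bx}$, and hence $\bar b$, is only defined almost everywhere. One must check that $\beta$ can be chosen to be a genuinely progressively measurable, bounded functional on all of ${\mathcal C}^2$, for instance by setting $\bar b=0$ off $\tilde{\mathcal C}$. This requires a nonanticipating version of $\bx\mapsto\mu^{\bx}$. I would try to obtain one as the pointwise limit of the continuous nonanticipating maps $\bx\mapsto\mu^{n,\bx}$ from Lemma \ref{lem.mu}, defining the limit on the progressively measurable set where it exists.
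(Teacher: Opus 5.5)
Your proof is correct, but it takes a genuinely different route from the paper's.

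\textbf{The paper's route.} It builds $Q$ as a weak limit of the laws $Q^n$ of $(X^0,Y^{n,X^0})$ for Lipschitz approximations $b^n$. It identifies the limit through the martingale problem \eqref{Qmartingale}, which needs the triple approximation $I_{n,p,m}$ and the convergence $\mu^{n,X^0}\to\mu^{X^0}$ from Lemma \ref{lem.mu}, followed by L\'evy's characterization. It gets (ii) by passing to the limit in the conditional marginals. It proves uniqueness by invoking uniqueness in law for the conditional McKean--Vlasov equation solved by $\by$ under $Q^{\bx^0}$, an argument it only sketches.

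\textbf{Your route.} You observe that once $\mu^{\bx}$ is supplied by Lemma \ref{lem.mu}, $\bar b$ is a fixed bounded functional. Condition (i) then describes an ordinary SDE with bounded progressive drift $\beta$. Existence and uniqueness of $Q$ become a single Girsanov transform of $R$, and uniqueness already follows from (i) alone. Property (ii) reduces to uniqueness for a \emph{linear} Fokker--Planck equation, a simpler case of the $L^1$ mild estimate in Lemma \ref{lem.mu}. This even yields (ii) for every $t$ rather than $dt$-a.e.

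\textbf{Comparison.} Your argument is shorter and avoids tightness and the limit in the martingale problem. The paper's argument ties $Q$ to the Lipschitz approximations $b^n$ and obtains (ii) directly from the convergence established in Lemma \ref{lem.mu}, with no further uniqueness argument.

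\textbf{The measurability issue.} The issue you flag is not specific to your approach. The paper also needs a progressively measurable version of $\bar b$, both to state \eqref{Qmartingale} and for the Girsanov step in the proof of Theorem \ref{thm.existenceCN}. Your fix works. The subsequence in Lemma \ref{lem.mu} is chosen independently of $\bx$, and each map $\bx\mapsto\mu^{n,\bx}_t$ is continuous and depends only on $\bx|_{[0,t]}$. Hence the limit, taken on the set where it exists, gives an everywhere nonanticipating Borel version.
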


 \begin{proof} 
   Let $W,W^0$ be two independent Brownian motions on a probability space $(\Omega, \mathcal F, \P)$ and $X_0$ be an initial condition of law $\mu_0$ independent of $W,W^0$. We consider a regularization $b^n$ of $b$ as in \eqref{defbn} and, for any $\bx\in {\mathcal C}$, let $Y^{n,\bx}$ be the solution to the McKean-Vlasov equation
$$
Y^{n,\bx}_t = X_0-\bar\mu_0 + \int_0^t b^n(Y^{n,\bx}_s +\bx_s,\bx_s)ds - \int_0^t \E[b^n(Y^{n,\bx}_s+\bx_s,\bx_s)]+W_t.
$$ We denote the distribution of $Y^{n,\bx}_t$ by  $\mu^{n,\bx}_t$.
Let us set $X^0=\bar\mu_0+W^0$ and $\bar Y^n_t = Y^{n,X^0}_t$. Let $Q^n$ be the law of $(X^0,\bar Y^n)$ on ${\mathcal C}^2$. We recall from Lemma \ref{lem.mu} that, as $n\to\infty$,  $\mu^{n,X^0}$ converges for a.s. to $\mu^{X^0}$ in $C([0,T], \mathcal P_{1}(\R^d))$. 

As the $b^n$ are uniformly bounded, classical arguments show that the family $(Q^n)$ is tight. Let us denote again by $(Q^n)$ a weakly converging subsequence and let $Q$ be its limit. The next step consists in showing that, for any smooth map $\phi:(\R^d)^2\to \R$ with a compact support,  the process
\begin{align}\label{Qmartingale}
& M^\phi_t:= \phi(\bx^0_t,\by_t)-\int_0^t \Bigl(\frac{1}{2} \Delta \phi (\bx^0_s,\by_s) +D_y\phi(\bx^0_s,\by_s)\cdot \Bigl(b(\by_s+\bx^0_s,\bx^0_s) 
-\bar b(s,\bx^0_s,\bx^0)\Bigr)\Bigr)ds
\end{align} 
 is a $Q-$martingale. As this statement holds for $Q^n$ and $b^n$ instead of $Q$ and $b$, we just need to check that, for any $k\in \N$, any smooth maps $\phi:\{\R^d\}^2\to \R$ and $\psi:\{\R^d\}^{2k}\to \R$ with a compact support and any $0\le t_1\le  t_2\le \dots\le t_k\le r\le t\le T$,  
\begin{align}\label{jkehsnrdf}
&\lim_{n\to\infty} \E_{Q^n}\Bigl[ \Bigl(\phi(\bx^0_t,\by_t)-\phi(\bx^0_r,\by_r)-\int_r^t \Bigl(\frac12\Delta \phi (\bx^0_s,\by_s) +D_y\phi(\bx^0_s,\by_s)\cdot \Bigl(b^n(\by_s+\bx^0_s,\bx^0_s) \notag\\
&\qquad\qquad\qquad\qquad\qquad\qquad   -\int_{\R^d} b^n(z+\bx^0_s,\bx^0_s)\mu^{n,\bx^0}_s(dz)\Bigr)\Bigr)ds\ \psi(\bx^0_{t_1},\by_{t_1},\dots, \bx^0_{t_k},\by_{t_k})\Bigr] \notag \\
& = \E_{Q}\Bigl[ \Bigl(\phi(\bx^0_t,\by_t)-\phi(\bx^0_r,\by_r)-\int_r^t \Bigl(\frac12\Delta \phi (\bx^0_s,\by_s) +D_y\phi(\bx^0_s,\by_s)\cdot \Bigl(b(\by_s+\bx^0_s,\bx^0_s) \\
&\qquad\qquad\qquad\qquad \qquad\qquad    -\int_{\R^d} b(z+\bx^0_s,\bx^0_s)\mu^{\bx^0}_s(dz))\Bigr)\Bigr)ds \  \psi(\bx^0_{t_1},\by_{t_1},\dots, \bx^0_{t_k},\by_{t_k})\Bigr] \notag . 
\end{align}
To prove the claim \eqref{jkehsnrdf}, we first note that the convergence of all the terms but the one multiplied  by $D_y\phi(\bx^0_s,\by_s)$ is clear. We now focus on this term. Let us set, for $p\leq m\leq n$,  
\begin{align*}
&I_{n,p,m} = \E_{Q^n}\Bigl[ \int_r^t \Bigl(D_y\phi(\bx^0_s,\by_s)\cdot (b^p(\by_s+\bx^0_s,\bx^0_s) \\
&\qquad\qquad\qquad  -\int_{\R^d} b^p(z+\bx^0_s,\bx^0_s)\mu^{m,\bx^0}_s(dz)\Bigr)ds  \,\psi(\bx^0_{t_1},\by_{t_1},\dots, \bx^0_{t_k},\by_{t_k})\Bigr] .
\end{align*}
We denote by $I_{\infty,p,m}$ the corresponding quantity with $Q$ instead of $Q^n$ and use in the same way the notations $I_{\infty,p, \infty}$ and $I_{\infty,\infty,\infty}$. 
Then 
\begin{align*}
|I_{n,n,n}-I_{n,p, n}| &\leq \|\psi\|_\infty\|D\phi\|_\infty \E_{Q^n}\Bigl[ \int_r^t (|b^p(\by_s+\bx^0_s,\bx^0_s)-b^n(\by_s+\bx^0_s,\bx^0_s)| \\
 & \qquad\qquad + \int_{\R^d} |b^p(z+\bx^0_s,\bx^0_s)-b^n(z+\bx^0_s,\bx^0_s)| \mu^{n,\bx^0}_s(dz) )ds \Bigr]\\
 & = 2\|\psi\|_\infty\|D\phi\|_\infty \E_{\P}\left[  \int_r^t  \int_{\R^d} |b^p(z+X^0_s,X^0_s)-b^n(z+X^0_s,X^0_s)| \mu^{n,X^0}_s(dz) ds \right]
 & \leq \ep 
\end{align*}
for a fixed  $p$ large enough (independently of $n\geq p$) thanks to arguments similar to those in the proof of Lemma \ref{lem.mu}. On the other hand, once $p$ is fixed, we have in view of the convergence of $\mu^n$ to $\mu$:
\begin{align*}
|I_{n,p,n}-I_{n,p,m}| &\leq \|\psi\|_\infty\|D\phi\|_\infty \E_{Q^n}\Bigl[ \int_r^t \Bigl|  \int_{\R^d} b^p(z+\bx^0_s,\bx^0_s)(\mu^{m,\bx^0}_s(dz)-\mu^{n,\bx^0}_s(dz)\Bigr| ds \Bigr]\\
&\leq \|\psi\|_\infty\|D\phi\|_\infty {\rm Lip}(b^p) \E_{Q^n}\Bigl[ \int_r^t  {W}_1(\mu^{m,\bx^0}_s,\mu^{n,\bx^0}_s)  ds \Bigr]\\
& =  \|\psi\|_\infty\|D\phi\|_\infty {\rm Lip}(b^p) \E_\P\Bigl[ \int_r^t  {W}_1(\mu^{m,X^0}_s,\mu^{n,X^0}_s)  ds \Bigr] \\
 & \leq \ep, 
\end{align*}
for $m$ large enough, independently of $n\geq m$. 
 In a similar way, we can also choose $p$ and then $m$ large enough such that 
\begin{align*}
|I_{\infty,\infty,\infty}-I_{\infty,p,m}|\le|I_{\infty,\infty,\infty}-I_{\infty,p,\infty}|+|I_{\infty,p,\infty}-I_{\infty,p,m}|  &\leq  \eps. 
\end{align*}
In view of the continuity of $b^p$ (in both variables) and of the continuity of the map $\bx \to \mu^{m,\bx}$ from $\mathcal C$ into $C([0,T], \mathcal P_1(\R^d))$,  we can now choose $n$ large enough such that 
 \begin{align*}
 |I_{n,p,m}-I_{\infty,p,m}| \leq \ep. 
 \end{align*} 
 Combining the above estimates proves our claim \eqref{jkehsnrdf} and thus that the process $M^\phi$ defined in \eqref{Qmartingale} is a $Q-$martingale. 
 This shows
 that $Q$ is a solution of the martingale problem associated with the path-dependent operator
 \begin{align*}
 {\mathcal A}'_t\phi(\bx^0,\by)& = \frac12\Delta u (\bx^0_t,\by_t) +D_yu(\bx^0_t,\by_t)\cdot \Bigl(b(\by_t+\bx^0_t,\bx^0_t) -\int_{\R^d} b(z+\bx^0_t,\bx^0_t)\mu^{\bx^0}_t(dz)\Bigr).
 \end{align*}
 in the sense of \cite[Section 5.4]{KaSh}. As $b$ is bounded, \cite[Proposition 5.4.11]{KaSh} implies that $Q$ is actually a solution to the local martingale problem. Then \cite[Proposition 5.4.6]{KaSh} and its proof show that, under $Q$,  the $2d-$dimensional process 
$$
B_t:= \bigl(\bx^0_t-\bar\mu_0\; ,\;  \by_t-  \by_0 - \int_0^t  \left(b(\by_s+\bx^0_s, \bx^0_s)-\bar b(s,\bx^0_s, \bx^0)\right)ds \bigr)
$$
is a continuous local martingale in the canonical filtration generated by $(\bx,\by)$ with quadratic variation 
$$
\langle B^{(i)}, B^{(j)}\rangle_t = {\bf 1}_{\{i=j\}} t\qquad \forall i,j\in \{1, \dots, 2d\}.
$$
Thus, by Lévy's theorem, $B$ is a $(2d)-$dimensional Brownian motion for the canonical filtration. We rewrite $B$ as $(W^0,W)$, where $W^0$ and $W$ are two independent $d-$dimensional Brownian motions under $Q$.\\

It remains to check that that the conditional law under $Q$ of $\by_t$ given $\bx^0$ is $\mu^{\bx^0}_t$: 
indeed we have, for any $t_1<t_2<\dots< t_k$ and any functions $\phi:\R^d\to \R$ and $\psi:\{\R^d\}^k\to \R$, smooth with a compact support,  
 \begin{align*}
\E_{Q^n}\left[ \phi(\by_t)\psi(\bx^0_{t_1}, \dots, \bx^0_{t_k})\right]  & = \E_{\P}\left[ \phi(Y^{n,X^0}_t)\psi(X^0_{t_1}, \dots, X^0_{t_k})\right]  \\ 
&= \E_{\P}\left[ \int_{\R^d} \phi(z) \mu^{n,X^0}_t(dz) \psi(X^0_{t_1}, \dots, X^0_{t_k})\right] ,
 \end{align*}
 where the first equality holds by the definition of $Q^n$ and the second one  by the definition of $\mu^{n,X^0}$. Taking the limit $n\to\infty$ in this equality and using that $\mu^{n,X^0}_t$ tends a.s. to $\mu^{X^0}_t$ in $\mathcal P_1(\R^d)$, we find that 
 \begin{align*}
  \E_{Q}\left[ \phi(\by_t)\psi(\bx^0_{t_1}, \dots, \bx^0_{t_k})\right] &= \E_{\P}\left[ \int_{\R^d} \phi(z) \mu^{X^0}_t( dz) \psi(X^0_{t_1}, \dots, X^0_{t_k})\right] \\
&= \E_Q\left[ \int_{\R^d} \phi(z) \mu^{\bx^0}_t(dz) \psi(\bx^0_{t_1}, \dots, \bx^0_{t_k})\right].  
 \end{align*}
 This proves that the conditional law under $Q$ of $\by_t$ given $\bx^0$ is $\mu^{\bx^0}_t$ for $Q-$a.e. $\bx^0$. \\
 
Let us finally show that the measure $Q$ is unique. Let $Q^{\bx^0}$ be the conditional law of $\by$ given $\bx^0$: $Q(d\bx^0,d\by)= Q^{\bx^0}(d\by){\mathcal W}^{\bar\mu_0}(d\bx^0)$. Then for ${\mathcal W}^{\bar\mu_0}-$a.e. $\bx^0$, $\by$ is under $Q^{\bx^0}$ a weak solution to the McKean-Vlasov equation: 
 $$
 \by_t= \by_0+ \int_0^t  b(\by_s+\bx^0_s,\bx^0_s)ds - \int_0^t \int_{\R^d} b(z+\bx^0_s,\bx^0_s) \mu^{\bx^0}_s(dz)+W_t, 
 $$
 where $\mu^{\bx^0}_s(dz)$ is the law of $\by_s$ (given $\bx^0$) and $W$ is a Brownian motion independent of $\bx^0$. One can check, like in the proof of Lemma \ref{lem.mu}, that $\by$ is unique in law (see also \cite{MiVe20} for the fact that weak solutions are actually strong and unique), which implies that $Q^{\bx^0}$ is uniquely defined. As a consequence, $Q$ is uniquely defined. 
\end{proof}

\begin{proof}[Proof of Theorem \ref{thm.existenceCN}, existence]
  Let on $(\Omega, \F,{\mathbb Q})$, $(X^0_t,Y_t)_{t\in[0,T]}$ be distributed according to the probability measure $Q$ given by Lemma \ref{lem.mathcalYCN}. For $t\in[0,T]$, we set \begin{align*}
   W_t&=Y_t-Y_0-\int_0^t\left(b(Y_s+X^0_s,X^0_s)-\bar b(s,X^0_s,X^0)\right)ds,\\W^0_t&=X^0_t-\bar\mu_0-\int_0^t \bar b(s,X^0_s, X^0)ds,
  \end{align*} and ${\mathcal F}_t=\sigma((X^0_s,Y_s)_{s\in[0,t]})$. Then, under ${\mathbb Q}$,
  $\left(X^0_t-\bar\mu_0,W_t\right)_{t\in[0,T]}$
  is a $2d$-dimensional ${\mathcal F}_t$-Brownian motion independent from $\bar\mu_0+Y_0$ which is distributed according to $\mu_0$. Moreover, the conditional law of $Y_t$ given $X^0$ is ${\mathbb Q}\otimes dt$ a.e. equal to $\mu^{X^0}_t$. By Girsanov's theorem,  
   under the probability measure ${\mathbb P}$ such that $$
\frac{d{\mathbb P}}{d{\mathbb Q}} = \exp\left\{ \int_0^T \bar b(s,X^0_s, X^0)\cdot dX^0_s - \frac12 \int_0^T |\bar b(s,X^0_s, X^0)|^2ds \right\},
$$
$\left(W^0_t,W_t\right)_{t\in[0,T]}$ is a $2d$-dimensional ${\mathcal F}_t$-Brownian motion. Since $\bar b$ is nonanticipating, $W^0$ is adapted to the filtration generated by $X^0$. Since $\frac{d{\mathbb P}}{d{\mathbb Q}}$ is a function of $X^0$ only, under ${\mathbb P}$,  $\bar\mu_0+Y_0$ is  distributed according to $\mu_0$ and independent from $(W,W^0,X^0)$. Moreover, the conditional law of $Y_t$ given $X^0$ under ${\mathbb P}$ remains ${\mathbb P}\otimes dt$ a.e. equal to $\mu^{X^0}_t$. Also using the definition \eqref{defbarbCN} of $\bar b$, we deduce that
\begin{align*}
 \E_{\P}\left[ Y_t\ |\ X^0\right]  & = \E_{\P}[Y_0\ |\ X^0]  +\int_0^t \left(\E_{\P}\left[ b(Y_s+X^0_s,X^0_s)\ |\ X^0\right]-\bar b(s,X^0_s,X^0)\right)ds+ \E_{\P}[W_t\ |\ X^0]   \\
&=\int_{\R^d} (y-\bar\mu_0)\mu_0(dy)+\int_0^t \left(\int_{\R^d}b(y+X^0_s,X^0_s)\mu^{X^0}_s(dy)-\bar b(s,X^0_s,X^0)\right)ds+0=0.
\end{align*}
   Let us finally set 
$$
X_t=Y_t+X^0_t,\;t\in[0,T].
$$ 
We have
$$X^0_t =\E_{\P}\left[ X^0_t+Y_t\ |\ X^0\right]=\E_{\P}\left[ X_t\ |\ X^0\right].$$
To check that the tuple $(\Omega, (\mathcal F_t), \F,{\mathbb P}, W, W^0,X^0,X)$ is a weak solution to  \eqref{eq.barXsigma=0BISCN} in the sense of Definition \ref{def.weaksolNINCN}, it only remains to prove that the state equation holds. This follows from the above definitions of $W$ and $W^0$:
\begin{align*}
X_t &= Y_t+X^0_t=Y_0+\int_0^t \left(b(X_s,X^0_s)-\bar b(s,X^0_s,X^0)\right)ds+W_t+\bar\mu_0+\int_0^t\bar b(s,X^0_s,X^0)ds+W^0_t\\
&=X_0  +\int_0^t b(X_s,X^0_s)ds+ W_t+ W^0_t.
\end{align*}
\end{proof}

\begin{proof}[Proof of Theorem \ref{thm.existenceCN}, uniqueness] Let $(\Omega, (\mathcal F_t), \F,{\mathbb P}, W, W^0,X^0,X)$ be a weak solution to \eqref{eq.barXsigma=0BISCN}. We set as usual 
  $Y_t= X_t-X^0_t$. We note that, by Definition \ref{def.weaksolNINCN} (iii)-(iv), $Y$ solves 
\be\label{qskjhdkf}
 Y_t = X_0-\bar \mu_0 + \int_0^t b(Y_s+X^0_s, X^0_s) ds -\int_0^t \E\left[b(Y_s+X^0_s, X^0_s)\ |\ X^0\right] ds+ W_t.
 \ee
Let $m^{\bx}_t$ be the conditional law of $Y_t$ given $ X^0=\bx$. Then, as $X_0$, $W$ and $(W^0, X^0)$ are independent and $Y$ satisfies \eqref{qskjhdkf}, $m^{X^0}$ is ${\mathbb P}-$a.s. a solution to the McKean-Vlasov equation \eqref{eq.mu}. Thus $m^{ X^0}_t= \mu^{ X^0}_t$ for any $t$ and $\P-$a.s.,  where $\mu^{\bx}$ is the unique solution to \eqref{eq.mu} built in Lemma \ref{lem.mu} (note that, under $\P$, the law of $X^0$ is equivalent to ${\mathcal W}^{\bar \mu_0}$ by Girsanov's theorem).  

Using that $X^0=X-Y$, we deduce that $X^0$  satisfies
\begin{align*}
X^0_t & = \bar \mu_0 +\int_0^t \E\left[ b(Y_s+X^0_s,X^0_s)\ |\ X^0\right] ds + W^0_t \\
&= \bar \mu_0 +\int_0^t \int_{\R^d} b( y+X^0_s,X^0_s)  \mu^{X^0}_s(dy)ds + W^0_t \\ 
& = \bar \mu_0 +\int_0^t  \bar b(s, X^0_s,X^0)ds +W^0_t, 
\end{align*}
where $\bar b$ is defined by \eqref{defbarbCN}.
By Girsanov's theorem, under $\tilde \P$ such that 
\begin{align*}
\frac{d\tilde \P}{d \P} 
& =\exp\left\{ -\int_0^T \bar b(s, X^0_s, X^0)\cdot dX^0_s + \frac12 \int_0^T |\bar b(s, X^0_s, X^0)|^2ds \right\},
\end{align*}
 $X^0-\bar\mu_0$ is a $d$-dimensional Brownian motion. 
 Since $\frac{d\tilde \P}{d \P}$ is function of $X^0$ only, $X^0-\bar \mu_0$ keeps independent of $X_0$ and of $W$ under $\tilde \P$. Moreover, $\tilde \P$ a.s., for each $t\in[0,T]$, the conditional law of $Y_t$ given $X^0$ is $\mu^{X^0}_t$. We deduce that, under $\tilde\P$, $(X^0,Y)$ is distributed according to the probability measure $Q$ introduced in Lemma \ref{lem.mathcalYCN}. Moreover,
 \begin{align*}
   W^0_t&=X^0_t-\bar\mu_0-\int_0^t \bar b(s,X^0_s,X^0)ds\\
   W_t&=Y_t-Y_0+\int_0^t \left(\bar b(s,X^0_s,X^0)-b(Y_s+X^0_s,X^0_s)\right)ds,
 \end{align*}
 so that $(W^0,W)=\Psi(X^0,Y)$ where $\Psi:{\mathcal C}^2\to{\mathcal C}^2$ is defined by $$\Psi(\bx^0,\by)=\left(\bx^0-\bar\mu_0-\int_0^\cdot \bar b(s,\bx^0_s,\bx^0)ds,\by-\by_0+\int_0^\cdot \left(\bar b(s,\bx^0_s,\bx^0)-b(\by_s+\bx^0_s,\bx^0_s)\right)ds\right).$$
Thus for any bounded measurable map $\phi:{\mathcal C}^4\to \R$, we have 
\begin{align*}
&\E_{\P}\left[ \phi(W^0,W,X,X^0)\right] \\
& = \E_{\tilde \P}\Bigl[ \phi(\Psi(X^0,Y),X^0+ Y,X^0)\exp\Bigl\{ \int_0^T \bar b(s, X^0_s, X^0)\cdot dX^0_s- \frac12 \int_0^T |\bar b(s, X^0_s, X^0)|^2ds \Bigr\} \Bigr].
\end{align*}
We conclude that the distribution of $(W,W^0,X,X^0)$ under $\P$ is the image of $\exp\Bigl\{ \int_0^T \bar b(s, \bx^0_s, \bx^0)\cdot d\bx^0_s- \frac12 \int_0^T |\bar b(s, \bx^0_s, \bx^0)|^2ds \Bigr\} Q(d\bx^0,d\by)$ by
$${\mathcal C}^2\ni(\bx^0,\by)\mapsto (\Psi(\bx^0,\by),\bx^0+\by,\bx^0)\in{\mathcal C}^4.$$

\end{proof}

\subsection{Propagation of chaos}\label{subsec3.2}

Let us consider the following particle system: for $N\ge 1$,   
\begin{align}\label{particleCN}
\begin{cases}
\ds X^{i,N}_t=  X^i_0+ \int_0^t b\Bigl(X^{i,N}_s,X^{0,N}_s\Bigr)ds + \sigma W^i_t+\sigma^0 W^0_t,\;i\in\{1, \dots, N\} \\
\ds X^{0,N}_s =  \bar \mu_0+\int_0^s \frac{1}{N} \sum_{j=1}^N b(X^{j,N}_r, X^{0,N}_r)dr+\sigma^0 W^0_s
\end{cases}
\end{align}
where the initial conditions $ X^i_0$ are i.i.d. of law $\mu_0$ and independent of the $W^i$ and $W^0$ which are independent Brownian motions. Following \cite[Theorem~1]{Ver81}, this equation has a unique weak solution and this solution is strong. The goal of this section is to show the convergence in law of the $(X^{i,N})$ to the solution $X$ of \eqref{eq.barXsigma=0BISCN}. 

\begin{theorem}\label{thm.cvparticleWIN} Under assumption \eqref{HypCN}, there is a constant $C>0$, depending on $\|b\|_\infty$, $T$ and $d$ only, such that 
$$
\forall N\ge k\ge 1,\;d_{\rm TV}({\mathcal L}(X^{0,N},X^{1,N},\cdots,X^{k,N}),{\mathcal L}(X^0,X^1,\cdots,X^{k})) \le C\sqrt{\frac{k+1}N},
$$
 where ${\mathcal L}(X^{0,N},X^{1,N},\cdots,X^{k,N})$ denotes the distribution of  $(X^{0,N},X^{1,N},\cdots,X^{k,N})$ in the particle system \eqref{particleCN} and ${\mathcal L}(X^0,X^1,\cdots,X^k)$ the distribution of the $X^0$ component of the weak solution to \eqref{eq.barXsigma=0BISCN} given by Theorem \ref{thm.existenceCN} together with $k$ conditionally independent copies $(X^1,\cdots,X^k)$ distributed according to the conditional law of the $X$ component given $X^0$.\end{theorem}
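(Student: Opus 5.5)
The plan is to compare the particle system \eqref{particleCN} with a reference system of $N$ conditionally independent copies by a Girsanov argument, following \cite{Jab19}. We then restrict to the first $k$ particles through a filtering (conditional expectation) of the drift and conclude with Pinsker's inequality. Take $\sigma=\sigma^0=1$.

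\textbf{Reference system.} Using Theorem \ref{thm.existenceCN} and Lemma \ref{lem.mathcalYCN}, build on one space a process $X^0$ with $X^0_t=\bar\mu_0+\int_0^t\bar b(s,X^0_s,X^0)ds+W^0_t$. Add $N$ processes $X^i=X^0+Y^i$, where, given $X^0$, the $Y^i$ are i.i.d. with law $Q^{X^0}$ and driven by independent Brownian motions $W^i$. Then each $X^i$ satisfies $X^i_t=X^i_0+\int_0^tb(X^i_s,X^0_s)ds+W^i_t+W^0_t$, and any $(X^0,X^1,\dots,X^k)$ has law ${\mathcal L}(X^0,X^1,\dots,X^k)$.

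The particle system has exactly the same structure: the same initial data, the same noise matrix (row $e_0$ for $X^0$, rows $e_0+e_i$ for $X^i$, which is invertible) and the same drifts $b(X^i,X^0)$ for the particles. The only difference is the drift of the $X^0$ component, $\frac1N\sum_jb(X^j_s,X^0_s)$ instead of $\bar b(s,X^0_s,X^0)$. Let
$$\delta_s=\frac1N\sum_{j=1}^Nb(X^j_s,X^0_s)-\bar b(s,X^0_s,X^0).$$
Inverting the noise matrix, $dW^0=dX^0-(\text{drift}_0)\,ds$ and $dW^i=dX^i-dX^0-(b(X^i,X^0)-\text{drift}_0)\,ds$. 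So a change of drift $\delta$ in the $X^0$ coordinate appears in all $N+1$ innovations. The full-system relative entropy is therefore of order $(N+1)\E\int_0^T|\delta_s|^2ds=O(1)$, which is useless as it stands.

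\textbf{Projecting onto $k$ particles.} Consider only the $(k+1)$-tuple $(X^0,\dots,X^k)$ and its own filtration ${\mathcal F}^{k}$. By the mimicking/filtering form of Girsanov (as in \cite{Jab19}), the relative entropy of the law of this tuple under the particle system with respect to its law under the reference system is at most
$$\frac{k+1}{2}\,\E\int_0^T\bigl|\E[\delta_s\,|\,{\mathcal F}^k_s]\bigr|^2ds\;\le\;\frac{k+1}{2}\,\E\int_0^T|\delta_s|^2ds,$$
with expectations taken under the particle law. The factor $k+1$ comes from the $k+1$ innovations that see $\delta$. Pinsker's inequality then gives the claimed $C\sqrt{(k+1)/N}$, provided we show
$$\sup_{s\le T}\E_{\rm part}\bigl[|\delta_s|^2\bigr]\le C/N.$$

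\textbf{Main obstacle: the bound on $\E_{\rm part}|\delta_s|^2$.} This estimate is needed under the particle law, whereas the natural cancellation holds under the reference law. Under the reference law, $\delta_s$ is an average of terms that are conditionally i.i.d. given $X^0$, bounded by $2\|b\|_\infty$, and centred: by \eqref{defbarbCN} and Lemma \ref{lem.mathcalYCN}(ii), the conditional law of $Y^j_s$ given $X^0$ is $\mu^{X^0}_s$. Hoeffding's inequality conditionally on $X^0$ therefore gives sub-Gaussian tails at scale $N^{-1/2}$. In particular $\E_{\rm ref}|\delta_s|^4\le C/N^2$, and $\E_{\rm ref}\exp(cN|\delta_s|^2)\le C$ for $c$ small depending only on $\|b\|_\infty$.

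To transfer this to the particle law, write $\E_{\rm part}|\delta_s|^2=\E_{\rm ref}[Z^N_T|\delta_s|^2]$, where $Z^N$ is the full-system Girsanov density. Cauchy--Schwarz reduces the problem to bounding $\E_{\rm ref}[(Z^N_T)^2]$. In that exponent the martingale part can be removed as in the proof of Theorem \ref{thm.cvparticleNOIN}. What remains is an exponential moment of $C(N+1)\int_0^T|\delta_s|^2ds$, which Jensen in time and the sub-Gaussian bound control when $T$ is small, the smallness depending only on $\|b\|_\infty$.

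For general $T$, split $[0,T]$ into finitely many such intervals and iterate, using the Markov/nonanticipating structure and the fact that the conditional sub-Gaussian bound is uniform in the path $X^0$. All constants then depend only on $\|b\|_\infty$, $T$ and $d$. This exponential-moment step is the delicate part. If the iteration in time turns out awkward, I would fall back on Jabir's Gronwall-type scheme, which closes the estimate on $\sup_s\E_{\rm part}|\delta_s|^2$ directly through a differential inequality on the relative entropies.
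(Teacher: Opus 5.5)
Your architecture is sound. The innovation argument in the filtration ${\mathcal F}^k$ of $(X^0,\dots,X^k)$ does give $H({\mathcal L}^k_{\rm part}\,|\,{\mathcal L}^k_{\rm ref})\le\frac{k+1}{2}\E_{\rm part}\int_0^T|\delta_s|^2ds$, and Pinsker then reduces everything to $\E_{\rm part}\int_0^T|\delta_s|^2ds\le C/N$. The gap is in how you propose to prove this last bound. Cauchy--Schwarz against $\E_{\rm ref}[(Z^N_T)^2]$ needs an exponential moment of $c(N+1)\int_0^T|\delta_s|^2ds$ with $c$ an absolute constant, so it only works when $d\|b\|_\infty^2T$ is small. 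This limitation is not merely technical: $\E_{\rm ref}[(Z^N_T)^2]$ is a $\chi^2$-type quantity, and in general it is not bounded uniformly in $N$ for large $T$. For instance, rare initial configurations of probability $e^{-\kappa N}$, on which $|\delta|$ stays of size $\|b\|_\infty$ on $[0,T]$, can contribute roughly $e^{N(\|b\|_\infty^2T-\kappa)}$. Splitting $[0,T]$ does not help. Chaining the windows requires $\E_{\rm ref}[(Z^N_{t_m,t_{m+1}})^2\,|\,{\mathcal F}_{t_m}]\le C$ uniformly in the past. But the centring $\E_{\rm ref}[b(X^j_s,X^0_s)\,|\,X^0]=\bar b(s,X^0_s,X^0)$ holds only conditionally on $X^0$ alone. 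Once the particles' positions at $t_m$ are fixed, $\delta_s$ is no longer centred, and on atypical configurations its conditional exponential moments are exponentially large in $N$. The uniformity in the path $X^0$ that you invoke is therefore not the uniformity you need, and unconditional H\"older over the windows only worsens the required exponential moment.

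Your fallback is the right fix, but as written it is not an argument. Its key ingredient, which you do not state, is the entropy (Gibbs/Donsker--Varadhan) inequality: it transfers the sub-Gaussian bound from the reference law to the particle law without any $\chi^2$ control. This is the relative-entropy method in the spirit of \cite{JaWa16,La18}, not Jabir's. Let $H_t=\frac{N+1}{2}\int_0^t\E_{\rm part}|\delta_s|^2ds$ be the relative entropy of the full system on $[0,t]$, as you computed. Since $\delta_t$ is a functional of the paths on $[0,t]$, one gets $\eta N\,\E_{\rm part}|\delta_t|^2\le H_t+\log\E_{\rm ref}[e^{\eta N|\delta_t|^2}]\le H_t+\log C$ for $\eta$ small in terms of $d\|b\|_\infty^2$. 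Hence $H_t\le\frac1\eta\int_0^t(H_s+\log C)ds$, and Gronwall gives $H_T\le C'(\|b\|_\infty,T,d)$. Your projection bound then becomes $H({\mathcal L}^k_{\rm part}\,|\,{\mathcal L}^k_{\rm ref})\le\frac{k+1}{N+1}H_T$, and Pinsker concludes. So completed, yours is a valid route, different from the paper's.

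The paper never estimates anything under the particle law. It telescopes in total variation along the measures ${\mathbb Q}_m$ with density $Z^N_T/Z^N_{t_m}$ (reference dynamics on $[0,t_m]$, particle dynamics afterwards). Each increment then involves only $Z^N_{t_m,t_{m+1}}$ and $\Delta^N$ on a window of length $T/M$ under ${\mathbb P}$, where conditional Hoeffding gives \eqref{kjhazebzrdg}--\eqref{prop31jabir}, with $M$ fixed in terms of $\|b\|_\infty^2T$. It restricts to $k$ particles by conditioning on $(X^0,\dots,X^k)$ and dropping the $dW^i$, $i>k$. Your completed route gives the stronger $O((k+1)/N)$ entropy bound and projects in the tuple's own nonanticipative filtration. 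The paper instead conditions on whole paths under ${\mathbb Q}_{m+1}$, whose dynamics after $t_{m+1}$ couple $X^0$ to all particles, which makes that step more delicate.
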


\begin{remarks}
\begin{enumerate}
\item \label{rem.propag} Note that in the particle system \eqref{particleCN}, like in \eqref{PartSystNIN} without individual noise, the coupling term 
\begin{align*}
X^{0,N}_t & =  \bar \mu_0+\int_0^t \frac{1}{N} \sum_{j=1}^N b(X^{j,N}_r, X^{0,N}_r)dr+\sigma^0 W^0_t \\
& =\frac{1}{N} \sum_{j=1}^N X^{j,N}_t+ \bar \mu_0-\frac{1}{N} \sum_{j=1}^N X^{j}_0- \frac{\sigma}{N} \sum_{j=1}^N W^i_t
\end{align*}
slightly differs from the more natural mean $\frac{1}{N} \sum_{j=1}^N X^{j,N}_t$. Despite the fact that the terms  $ \bar \mu_0-\frac{1}{N} \sum_{j=1}^N X^{j}_0$ and $\frac{1}{N} \sum_{j=1}^N W^i_t$ converge to $0$ as $N\to\infty$, we were not able to show the propagation of chaos when replacing $X^{0,N}$ by the more natural empirical mean $\frac{1}{N} \sum_{j=1}^N X^{j,N}_t$. It seems possible to deal with the contribution $ \bar \mu_0-\frac{1}{N} \sum_{j=1}^N X^{j}_0$ by first proving regularity of the law of the solution to \eqref{eq.barXsigma=0BISCN} in the initial distribution $\mu_0$ like we did without individual noise. But we do not know how to treat the remaining contribution $\frac{1}{N} \sum_{j=1}^N W^i_t$.

\item The proof of the convergence follows closely the construction of Jabir in \cite{Jab19}. Similar ideas, based on change of measures argument and the Girsanov transform, can be found in Jabin-Wang \cite{JaWa16, JaWa18} and Lacker \cite{La18}. 

\item The result actually gives another proof for the uniqueness in law of the nonlinear process $P(d\bx^0,d\bx)$. 
\end{enumerate}
\end{remarks}

\begin{proof} Throughout the proof we assume for notational simplicity that $\sigma=\sigma^0=1$. Moreover $C$ denotes a constant which depends on dimension only. Note that ${\mathcal L}(X^0,X^1,\cdots,X^k)=P^k$ where $P^k(d\bx^0,\cdots,d\bx^k)= R(d\bx^0)\prod_{i=1}^kP(d\bx^i|\bx^0)$ with $P(d\bx^0,\bx)= R(d\bx^0)P(d\bx|\bx^0)$ denoting the law of the nonlinear process \eqref{eq.barXsigma=0BISCN}.
 Let, under the probability measure ${\mathbb P}$, $(X^0,X^1,\cdots,X^N)$ be distributed according to $P^N(d\bx^0,\cdots,d\bx^N)= R(d\bx^0)\prod_{i=1}^NP(d\bx^i|\bx^0)$. 
  There exist independent $d$-dimensional Brownian motions $(W^0,\cdots,W^N)$ independent from the initial random variables $( X^i_0)_{1\le i\le N}$ i.i.d. according to $\mu_0$ such that
\begin{align*}
  &X^{i}_t= X^i_0+\int_0^tb(X^{i}_s,X^0_s)ds+W^i_t+W^0_t,\; t\in[0,T],\;i\in\{1,\cdots,N\},\\
  &\mbox{ with }X^0_t=\bar\mu_0+\int_0^t\E_{{\mathbb P}}[b(X^{1}_s,X^0_s)|X^0]ds+W^0_t
    ,\; t\in[0,T].
 \end{align*}
Let$$
\Delta^N_s = \E_{{\mathbb P}}[b(X^{1}_s,X^0_s)|X^0]-\frac 1N\sum_{j=1}^Nb(X^{j}_s,X^0_s),
$$
$\ep^i =-1$ if $i=0$ and $\ep^i =1$ otherwise, and 
\begin{align*}
   Z^N_t=\exp\bigg(&\int_0^t\Delta^N_s\cdot \Bigl( \sum_{i=0}^N \ep ^idW^i_s\Bigr)-\frac{N+1}2 \int_0^t\big|\Delta^N_s\big|^2ds\bigg).
\end{align*}
By Girsanov's theorem, under the probability measure ${\mathbb Q}$ with density $\frac{d{\mathbb Q}}{d{\mathbb P}}=Z^N_T$ with respect to ${\mathbb P}$, the random variables $( X^i_0)_{1\le i\le N}$ remain i.i.d. with law $\mu_0$ and independent from the independent Brownian motions $(B^0,B^1,\cdots ,B^N)$ where
\begin{align*}
  &B^0_t=W^0_t+\int_0^t\E_{{\mathbb P}}[b(X^{1}_s,X^0_s)|X^0]-\frac 1N\sum_{j=1}^Nb(X^{j}_s,X^0_s)ds,\; t\in[0,T],\\
  &B^i_t=W^i_t+\int_0^t\frac 1N\sum_{j=1}^Nb(X^{j}_s,X^0_s)-\E_{{\mathbb P}}[b(X^{1}_s,X^0_s)|X^0]ds,\; t\in[0,T],\;i\in\{1,\cdots,N\}.
\end{align*}
Note that for $i\in\{1,\cdots,N\}$ and $t\in[0,T]$, $W^i_t+W^0_t=B^i_t+B^0_t$. Moreover,
\begin{align*}
  &X^{i}_t= X^i_0+\int_0^tb(X^{i}_s,X^0_s)ds+B^i_t+B^0_t,\; t\in[0,T],\;i\in\{1,\cdots,N\},\\
  &\mbox{ with }X^0_t=\bar\mu_0+\int_0^t\frac 1N\sum_{j=1}^Nb(X^{j}_s,X^0_s)ds+B^0_t=\bar\mu_0+\frac 1N\sum_{i=1}^N(X^i_t-(X^i_0+B^i_t))
,\; t\in[0,T].
\end{align*}
Hence, by weak uniqueness for the  particle system \eqref{particleCN}, $(X^{0},\cdots,X^k)$ is distributed according to $Q^{N,k}={\mathcal L}(X^{0,N},X^{1,N},\cdots,X^{k,N})$ under ${\mathbb Q}$. 

We estimate $d_{\rm TV}(Q^{N,k},P^k)$ by reproducing the proof of \cite{Jab19}. We detail it in our specific setting for the sake of completeness. 
The process $Z^N$ satisfies
$$
Z^N_t = 1+\int_0^t Z^N_s \Delta^N_s \cdot \Bigl( \sum_{i=0}^N \ep^i dW^i_s\Bigr).
$$
Fix $M\in \N$ large and set, for $m=0, \dots, M$,  $t_m:= \frac{T m}{M}$. We define ${\mathbb Q}_m$ by 
$$
\frac{d {\mathbb Q}_m}{d{\mathbb P}}= \frac{Z^N_T}{Z^N_{t_m}}= \exp\bigg(\int_{t_m}^T\Delta^N_s\cdot \Bigl( \sum_{i=0}^N \ep ^idW^i_s\Bigr)-\frac{N+1}2 \int_{t_m}^T\big|\Delta^N_s\big|^2ds\bigg).
$$
Note that 
$$
\frac{d {\mathbb Q}_m}{d{\mathbb Q}_{m+1}} = \frac{Z^N_{t_{m+1}}}{Z^N_{t_m}}.
$$
For a fixed $k\in \{1, \dots, N\}$, we also let $Q^{N,k}_m$ be the law of $(X^0, \dots, X^k)$ under ${\mathbb Q}_m$. Then $Q^{N,k}_M=P^k$ and $Q^{N,k}_0=Q^{N,k}$. 
For each $m$, 
$$
\frac{d Q^{N,k}_m}{dQ^{N,k}_{m+1}} = \E_{{\mathbb Q}_{m+1}}\left[  \frac{Z^N_{t_{m+1}}}{Z^N_{t_m}} \ \Bigl|\ X^0, \dots, X^k\right] .
$$
Thus 
$$
d_{\rm TV}(Q^{N,k}_m,Q^{N,k}_{m+1}) =  \E_{{\mathbb Q}_{m+1}} \left[ \left| \E_{{\mathbb Q}_{m+1}}\left[  \frac{Z^N_{t_{m+1}}}{Z^N_{t_m}} -1\ \Bigl|\ X^0, \dots, X^k\right]\right|\right].
$$
If we set 
$$
Z^N_{s,t} = \frac{Z^N_t}{Z^N_s}, 
$$
we have 
$$
\frac{Z^N_{t_{m+1}}}{Z^N_{t_m}} = Z^N_{t_m,t_{m+1}} =1+  \int_{t_m}^{t_{m+1}} Z^N_{t_m,r}\Delta^N_r \cdot \sum_{i=0}^N \ep_i dW^i_r.
$$
Under ${\mathbb Q}_{m+1}$, which coincides with ${\mathbb P}$ on $\sigma((X^0_s,X^1_s,\cdots,X^N_s)_{s\in[0,t_{m+1}]})$, the $(W^i)_{i\geq k+1}$ are independent of $X^0, \dots, X^k$ on $[0,t_{m+1}]$. Therefore 
\begin{align*}
d_{\rm TV}(Q^{N,k}_m,Q^{N,k}_{m+1}) & =  \E_{{\mathbb Q}_{m+1}} \left[ \left|  \E_{{\mathbb Q}_{m+1}}\left[ \int_{t_m}^{t_{m+1}} Z^N_{t_m,r}\Delta^N_r \cdot \sum_{i=0}^k \ep_i dW^i_r \ \Biggl|\ X^0, \dots, X^k\right]   \right|\right]\\
& \leq   \E_{{\mathbb Q}_{m+1}} \left[ \left| \int_{t_m}^{t_{m+1}} Z^N_{t_m,r}\Delta^N_r \cdot \sum_{i=0}^k \ep_i dW^i_r \right|   \right]\\
& \leq  (k+1)^{1/2} \left( \E_{{\mathbb Q}_{m+1}} \left[ \int_{t_m}^{t_{m+1}} (Z^N_{t_m,r})^2| \Delta^N_r|^2dr \right]\right)^{1/2}\\
& = (k+1)^{1/2} \left( \E_{{\mathbb P}} \left[ \int_{t_m}^{t_{m+1}} (Z^N_{t_m,r})^2| \Delta^N_r|^2dr \right]\right)^{1/2},
\end{align*}
since ${\mathbb P}$ and ${\mathbb Q}_{m+1}$ coincide on $\sigma((X^0_s,X^1_s,\cdots,X^N_s)_{s\in[0,t_{m+1}]})$. We get   
\begin{align*}
\E_{{\mathbb P}} \left[ \int_{t_m}^{t_{m+1}} (Z^N_{t_m,r})^2| \Delta^N_r|^2dr \right]  &\leq \E_{{\mathbb P}} \left[ \sup_{r\in [t_m, t_{m+1}]}(Z^N_{t_m,r})^2 \int_{t_m}^{t_{m+1}} | \Delta^N_r|^2dr \right] \\ 
& \leq \left( \E_{{\mathbb P}} \left[ \sup_{r\in [t_m, t_{m+1}]}(Z^N_{t_m,r})^{4}\right]\right)^{1/2} \left( \E_{{\mathbb P}}\left[ \left(\int_{t_m}^{t_{m+1}} | \Delta^N_r|^{2}dr\right)^{2} \right]\right)^{1/2}.
\end{align*}
Using Doob's inequality and the fact that $(Z^N_{t_m,r})_{r\geq t_m}$ is a ${\mathbb P}-$martingale we obtain therefore 
\begin{align*}
\E_{{\mathbb P}} \left[ \int_{t_m}^{t_{m+1}} (Z^N_{t_m,r})^2| \Delta^N_r|^2dr \right] 
& \leq \frac{16}{9} \left( \E_{{\mathbb P}} \left[ (Z^N_{t_m,t_{m+1}})^{4}\right]\right)^{1/2}  \left( \E_{{\mathbb P}}\left[ \left(\int_{t_m}^{t_{m+1}} | \Delta^N_r|^{2}dr\right)^{2} \right]\right)^{1/2}.
\end{align*}
We prove below that the last term in the right-hand size can be bounded as follows: 
\begin{align}\label{kjhazebzrdg}
\E_{{\mathbb P}}\left[\left(\int_r^t\big|\Delta^N_s\big|^{2}ds\right)^2\right] &  \leq C\left(\frac{\|b\|_{\infty}^2(t-r)}{N}\right)^2,
\end{align}
while the middle one can be estimated by:
\be\label{prop31jabir}
 \E_{{\mathbb P}} \left[ (Z^N_{t_m,t_{m+1}})^{4}\right] \leq \frac{C}{1- C   \|b\|_{\infty}(TM^{-1})^{1/2}}, 
 \ee
 provided $M$ is large enough. Postponing  the proof of \eqref{kjhazebzrdg} and \eqref{prop31jabir}, let us conclude the ongoing argument. 
 Choosing $M$ large enough (independently of $N$), we obtain 
 \begin{align*}
d_{\rm TV}(Q^{N,k}_m,Q^{N,k}_{m+1})  & \leq  \frac{C(k+1)^{1/2}}{(1- C   \|b\|_{\infty}(TM^{-1})^{1/2})^{1/4}}\left(\frac{\|b\|_{\infty}^2T}{MN}\right)^{1/2}.
\end{align*} 
Thus 
 \begin{align*}
d_{\rm TV}(P^k,Q^{N,k})=d_{\rm TV}(Q^{N,k}_0,Q^{N,k}_M)\le \sum_{m=0}^{M-1}d_{\rm TV}(Q^{N,k}_m,Q^{N,k}_{m+1})  \leq C\frac{(k+1)^{1/2}}{N^{1/2}}, 
\end{align*}
where this last constant $C$ depends on $\|b\|_\infty$, $T$ and $d$ only. 
\end{proof} 

To complete the proof of the theorem, it remains to check that \eqref{kjhazebzrdg} and \eqref{prop31jabir} hold. 

\begin{proof}[Proof of \eqref{kjhazebzrdg}] We reproduce here \cite[Proposition 4.2]{Jab19}. Note that, as the $X^j$ are independent given $X^0$ and $b$ is bounded, we have, by Hoeffding's inequality combined with \cite[Theorem 2.1]{BLM16}, that, for any integer $q\geq 2$,  
\be\label{jabir1}
\forall s\in[0,T],\;\E_{{\mathbb P}}\left[\bigg|\frac 1N\sum_{j=1}^Nb(X^{j}_s,X^0_s)
  -\E_{{\mathbb P}}[b(X^{1}_s,X^0_s)|X^0]\bigg|^{2q}\bigg|X^0\right]\le q!\left(\frac{4d\|b\|_{\infty}^2}{N}\right)^q.
  \ee
As a consequence,
$$\forall s\in[0,T],\;\E_{{\mathbb P}}\left[\bigg|\frac 1N\sum_{j=1}^Nb(X^{j}_s,X^0_s)
  -\E_{{\mathbb P}}[b(X^{1}_s,X^0_s)|X^0]\bigg|^{2q}\right]\le q!\left(\frac{4d\|b\|_{\infty}^2}{N}\right)^q$$
and, for any $0\le r\le t\le T$, 
\begin{align}\label{skfhjldkgnf}
\E_{{\mathbb P}}\left[\left(\int_r^t\big|\Delta^N_s\big|^{2}ds\right)^q\right] & = 
 \;\E_{{\mathbb P}}\left[\left(\int_r^t\bigg|\frac 1N\sum_{j=1}^Nb(X^{j}_s,X^0_s)
    -\E_{{\mathbb P}}[b(X^{1}_s,X^0_s)|X^0]\bigg|^{2}ds\right)^q\right] \notag\\
    & \leq q!\left(\frac{4d\|b\|_{\infty}^2(t-r)}{N}\right)^q.
\end{align}
The case $q=2$ yields \eqref{kjhazebzrdg}. 
\end{proof}

\begin{proof}[Proof of \eqref{prop31jabir}] We reproduce here the proof \cite[Proposition 3.1]{Jab19} in our setting. We have, for any $p\geq 1$,
\begin{align*}
\E_{{\mathbb P}} \left[ (Z^N_{t_m,t_{m+1}})^{2p}\right] & = 
\E_{{\mathbb P}}\left[  \exp\bigg(2p\int_{t_m}^{t_{m+1}}\Delta^N_s\cdot \Bigl( \sum_{i=0}^N \ep ^idW^i_s\Bigr)-p(N+1) \int_{t_m}^{t_{m+1}}\big|\Delta^N_s\big|^2ds\bigg)\right] \\
& \leq \E_{{\mathbb P}}\left[  \exp\bigg(2p\int_{t_m}^{t_{m+1}}\Delta^N_s\cdot \Bigl( \sum_{i=0}^N \ep ^idW^i_s\Bigr)\bigg)\right] 
\end{align*}
By Taylor expansion of the exponential and Cauchy-Schwarz inequality, we get 
\begin{align*}
 \E_{{\mathbb P}}\left[  \exp\bigg(2p\int_{t_m}^{t_{m+1}}\Delta^N_s\cdot \Bigl( \sum_{i=0}^N \ep ^idW^i_s\Bigr)\bigg)\right] 
 &\leq \sum_{k\geq 0} \frac{(2p)^k}{k!}  \E_{{\mathbb P}}\left[  \bigg(\int_{t_m}^{t_{m+1}}\Delta^N_s\cdot \Bigl( \sum_{i=0}^N \ep ^idW^i_s\Bigr)\bigg)^k\right]\\
 & \leq\sum_{k\geq 0} \frac{(2p)^k}{k!}  \bigg( \E_{{\mathbb P}}\left[  \bigg(\int_{t_m}^{t_{m+1}}\Delta^N_s\cdot \Bigl( \sum_{i=0}^N \ep ^idW^i_s\Bigr)\bigg)^{2k}\right] \bigg)^{1/2} .
\end{align*}
By the Burkholder-Davis-Gundy inequality (see \cite[Remark 2]{CaKr} for the constant) and then \eqref{skfhjldkgnf}, we have 
\begin{align*}
\E_{{\mathbb P}}\left[  \bigg(\int_{t_m}^{t_{m+1}}\Delta^N_s\cdot \Bigl( \sum_{i=0}^N \ep ^idW^i_s\Bigr)\bigg)^{2k}\right]
& \leq 2^{2k}(2k)^k  (N+1)^k \E_{{\mathbb P}}\left[ \bigg(\int_{t_m}^{t_{m+1}}|\Delta^N_s|^2ds\bigg)^{k}\right] \\
& \leq  2^{2k}(2k)^k  (N+1)^k k!\left(\frac{4d\|b\|_{\infty}^2T}{MN}\right)^k.
\end{align*}
Plugging these inequalities together we find 
\begin{align*}
\E_{{\mathbb P}} \left[ (Z^N_{t_m,t_{m+1}})^{2p}\right] &\leq  \sum_{k\geq 0} \frac{(2p)^k}{k!}  2^{k}(2k)^{k/2}  (N+1)^{k/2} (k!)^{1/2} \left(\frac{4d\|b\|_{\infty}^2T}{MN}\right)^{k/2}\\ 
&\leq \sum_{k\geq 0} \frac{k^{k/2}}{(k!)^{1/2}}  \bigg( \frac{(N+1) (2p)^2 32 d  \|b\|_{\infty}^2T}{MN}\bigg)^{k/2},
\end{align*}
where, by the Stirling formula, $k! \geq C^{-1} (2\pi k)^{1/2} (k/e)^k$, so that 
\begin{align*}
\E_{{\mathbb P}} \left[ (Z^N_{t_m,t_{m+1}})^{2p}\right] &\leq  C \sum_{k\geq 0} \bigg( C \frac{ p^2  \|b\|_{\infty}^2T}{M}\bigg)^{k/2},
\end{align*}
where $C$ depends on dimension only. So for $M$ large enough, this quantity if finite and 
\begin{align*}
\E_{{\mathbb P}} \left[ (Z^N_{t_m,t_{m+1}})^{2p}\right] &\leq  \frac{C}{1- C p  \|b\|_{\infty}(TM^{-1})^{1/2}}. 
\end{align*}
\end{proof}

\appendix

\section{An example  without solution}\label{sec.appen}

We consider the problem 
\be\label{qekjsd}
X_t = X_0 + \int_0^t b(X_s,{\rm Var}(X_s|W^0))ds + W^0_t, 
\ee
where $W^0$ is a standard $1$-dimensional Brownian motion,  $X_0$ is independent of $W^0$ and $${\rm Var}(X_t|W^0)= \E[(X_t)^2|W^0]-\E[X_t|W^0]^2.$$ Typically, $b:\R \times \R\to \R$ is Lipschitz in the first variable uniformly in the second one, and globally bounded. Note that this structure is very close to the one discussed in Section \ref{sec.1}, but with the conditional expectation replaced by the conditional variance. 

We assume that $X_0= x^1_0{\bf 1}_A + x^2_0{\bf 1}_{A^c}$, where $x^1_0,x^2_0\in\R$ and the event $A$ is independent of $W^0$ and of probability $1/2$ (to fix the ideas). Let us assume that $X$ is a strong solution to \eqref{qekjsd} and let us set $\tilde X_t= X_t-W^0_t$ and 
$$
\beta_t = {\rm Var}(X_t|W^0)= {\rm Var}(\tilde X_t|W^0).
$$
Then 
$$
\tilde X_t= X_0+ \int_0^t b(\tilde X_s+W^0_s, \beta_s)ds,  
$$
which, given $\beta_s$ and in view of the regularity of $b$, has a unique solution. As $\beta$ is adapted  to the filtration generated by $W^0$ (and thus is independent of $A$), this solution is  of the form $\tilde X_t= \tilde X^1_t{\bf 1}_A+ \tilde X^2_t{\bf 1}_{A^c}$, where, for $j=1,2$, 
$$
 \tilde X^j_t = x^j_0 + \int_0^t b(\tilde X^j_s+ W^0_s,\beta_s)ds, 
 $$
and where $\tilde X^1$ and $\tilde X^2$ are adapted to the filtration generated by $W^0$. Thus, setting $X^j_t= \tilde X^j_t+W^0_t$, 
 $$
 \beta_t = {\rm Var}(\tilde X_s|W^0) = (\tilde X^1_t-\tilde X^2_t)^2/4 =  ( X^1_t- X^2_t)^2/4. 
 $$
  Therefore the initial McKean-Vlasov equation is equivalent to the system of SDEs: 
  $$
  X^j_t  = x^j_0+ \int_0^t b( X^j_s, ( X^1_s- X^2_s)^2/4) ds + W^0_t, \qquad j=1,2,
  $$
  for which a solution does not necessarily exist. This is for instance  the case if $x^1_0= 1$ and $x^2_0=-1$ and if $b=b(y_1,y_2)$ is discontinuous at $y_2= 1$, with 
  $$
  b(y_1,y_2)=-{\rm sign}(y_2-1)\phi(y_1),
   $$
where the Lipschitz continuous and bounded map $\phi:\R\to \R$ is such that $\phi(y_1)= 1$ if $y_1\geq 1/2$ and $\phi(y_1)= -1$ if $y_1\leq -1/2$. Indeed, let $[0,\tau]$ (with $\tau>0$ a.s.) be a random interval  on which $X^1_t\geq 1/2$ and $X^2_t\leq -1/2$ and let us set $D_t= X^1_t-X^2_t-2$. Then on $[0,\tau]$, we have ${\rm sign}((\tilde X^1_t-\tilde X^2_t)^2/4-1)={\rm sign}(D_t)$ and 
 $$
 dD_t = -2{\rm sign}(D_t)dt,  \qquad D_0=0,
 $$
 equation which has no solution.

\end{document}